\numberwithin{equation}{section}
\newtheorem{theorem}{Theorem}[section]
\newtheorem{corollary}[theorem]{Corollary}
\newtheorem{assumption}[theorem]{Assumption}
\newtheorem{lemma}[theorem]{Lemma}
\newtheorem{proposition}[theorem]{Proposition}
\theoremstyle{remark}
\newtheorem{remark}[theorem]{Remark}
\newtheorem{example}[theorem]{Example}
\theoremstyle{definition}
\newtheorem{definition}[theorem]{Definition}
\newcommand\bp{\begin{proof}}
\newcommand\ep{\end{proof}}
\newcommand\bicrossl{%
  \mathrel{\scalerel*{\mathrel{\triangleright}\joinrel\blacktriangleleft}{x}}}
\newcommand\bicrossr{%
  \mathrel{\scalerel*{\blacktriangleright\joinrel\mathrel{\triangleleft}}{x}}}
\newcommand\biopencrossl{%
  \mathrel{\scalerel*{>\kern-.4\LMpt\joinrel\blacktriangleleft}{x}}}
\newcommand\biopencrossr{%
  \mathrel{\scalerel*{\blacktriangleright\joinrel\kern-.4\LMpt<}{x}}}
\newcommand\Dhat{{\hat\Delta}}
\newcommand\Ad{\operatorname{Ad}}
\newcommand\Aff{\operatorname{Aff}}
\newcommand\Aut{\operatorname{Aut}}
\newcommand\Dom{\operatorname{Dom}}
\newcommand\End{\operatorname{End}}
\newcommand\HS{\operatorname{HS}}
\newcommand\id{\operatorname{id}}
\newcommand\Op{\operatorname{Op}}
\newcommand\OKN{\operatorname{Op}_{\mathrm{KN}}}
\newcommand\Tr{\operatorname{Tr}}
\newcommand{\C}{{\mathbb C}}
\newcommand{\R}{{\mathbb R}}
\newcommand\T{{\mathbb T}}
\newcommand\Z{{\mathbb Z}}
\newcommand\g{{\mathfrak g}}
\newcommand\un{\mathds{1}}
\newcommand{\F}{{\mathcal F}}
\newcommand{\G}{{\mathcal G}}
\newcommand{\N}{{\mathcal N}}
\newcommand{\U}{{\mathcal U}}
\newcommand{\CE }{\mathcal E}
\newcommand{\CL}{\mathcal L}
\newcommand{\CJ}{\mathcal J}
\newcommand{\CB}{\mathcal B}
\newcommand{\CU}{\mathcal U}
\newcommand{\CF}{\mathcal F}
\newcommand{\eps}{\varepsilon}
\newcommand{\vf}{\varphi}
\begin{document}

\title{Quantization of subgroups of the affine group}

\date{June 5, 2019; minor changes November 21, 2020}

\author[Bieliavsky]{P. Bieliavsky}

\email{Pierre.Bieliavsky@uclouvain.be}

\address{Institut de Recherche en Math\'ematique et Physique, Universit\'e Catholique de Louvain, Chemin du Cyclotron, 2, 1348 Louvain-la-Neuve, Belgium}

\author[Gayral]{V. Gayral}

\email{victor.gayral@univ-reims.fr}

\address{Laboratoire de Math\'ematiques, CNRS UMR 9008, Universit\'e de Reims Champagne-Ardenne,
Moulin de la Housse - BP 1039,
51687 Reims, France}

\author[Neshveyev]{S. Neshveyev}

\email{sergeyn@math.uio.no}

\address{Department of Mathematics, University of Oslo,
P.O. Box 1053 Blindern, NO-0316 Oslo, Norway}

\thanks{}

\author[Tuset]{L. Tuset}

\email{larst@oslomet.no}

\address{Department of Computer Science, OsloMet - storbyuniversitetet,
P.O. Box 4 St. Olavs plass, NO-0130 Oslo, Norway}

\begin{abstract}
Consider a locally compact group $G=Q\ltimes V$ such that $V$ is abelian and the action of $Q$ on the dual abelian group $\hat V$ has a free orbit of full measure. We show that such a group~$G$ can be quantized in three equivalent ways:
\begin{enumerate}
\item by reflecting across the Galois object defined by the canonical irreducible representation of $G$ on $L^2(V)$;
\item by twisting the coproduct on the group von Neumann algebra of $G$ by a dual $2$-cocycle obtained from the
$G$-equivariant Kohn--Nirenberg quantization of $V\times\hat V$;
\item by considering the bicrossed product defined by a matched pair of subgroups of $Q\ltimes\hat V$ both isomorphic to $Q$.
\end{enumerate}

In the simplest case of the $ax+b$ group over the reals, the dual cocycle in (2) is an analytic analogue of the Jordanian twist. It was first found by Stachura using different ideas. The equivalence of approaches~(2) and~(3) in this case implies that
the quantum $ax+b$ group of Baaj--Skandalis is isomorphic to the quantum group defined by Stachura.

Along the way we prove a number of results for arbitrary locally compact groups $G$. Using recent results of De Commer we show that a class of $G$-Galois objects is parametrized by certain cohomology classes in $H^2(G;\T)$. This extends results of Wassermann and Davydov in the finite group case. A new phenomenon is that already the unit class in $H^2(G;\T)$ can correspond to a nontrivial Galois object. Specifically, we show that any nontrivial locally compact group~$G$ with group von Neumann algebra a factor of type I admits a canonical cohomology class of dual $2$-cocycles such that the corresponding quantization of $G$ is neither commutative nor cocommutative.
\end{abstract}

\maketitle
\tableofcontents

\section*{Introduction}

Although the problem of quantization of Lie bialgebras was solved in full generality more than 20 years ago by Etingof and Kazhdan~\cite{EK}, the list of noncompact Poisson--Lie groups admitting nonformal (analytic) quantizations is still quite short. The difficulty lies not only in making sense of certain formal constructions, but in that there exist real obstacles in doing~so. A famous example is the group $SU(1,1)$. At the Hopf $*$-algebraic level its quantization is well understood, but by a ``no go" result of Woronowicz there is no way of making sense of it at the operator algebraic level~\cite{Wor}. As was first realized by Korogodsky~\cite{Kor} and then completed by Koelink and Kustermans~\cite{KK}, the right group to quantize in this case is the nonconnected group $SU(1,1)\rtimes\Z/2\Z$, the normalizer of $SU(1,1)$ in $SL(2,\C)$.

The present paper is motivated by an even easier example, the $ax+b$ group $G$ over the reals. Its Lie algebra $\mathfrak g$ is generated by two elements $x,y$ such that $[x,y]=y$. Consider the Lie bialgebra $(\g,\delta)$, with the cobracket $\delta$ defined by the triangular $r$-matrix
$$
r:=x\otimes  y-y\otimes x.
$$
It can be explicitly quantized using the \emph{Jordanian twist}
$$
\Omega:=\exp\{x\otimes\log(1+h y)\}\in(U\g\otimes U\g)[[h]]
$$
found independently by Coll--Gerstenhaber--Giaquinto~\cite{CGG} and Ogievetsky~\cite{Og}.\footnote{In \cite{CGG}, the Jordanian twist does not appear in exponential form.
To our knowledge, it was first observed in~\cite{GV} that Coll--Gerstenhaber--Giaquinto's twist can be put in the exponential form and therefore it coincides with Ogievetsky's twist.} This twist and its generalizations have been extensively studied, see, e.g.,~\cite{KLM},~\cite{KLS}. It is particularly popular in physics literature, as it can be used to construct the $\kappa$-Minkowski space~\cite{BP}, which by \cite{MR} can also be obtained from a bicrossed product construction.

If we want to make sense of $\Omega$  as a unitary operator on $L^2(G\times G)$, since the elements~$x$ and~$y$ are skew-adjoint, our best bet is to take $h\in i\R$, but then we still have a problem with the logarithm, as the spectrum of $y$ is the entire line $i\R$. A correct analytic analogue of the Jordanian twist was found by Stachura~\cite{Stachura}, see formula~\eqref{eq:Stachura} below, but it turns out that, similarly to the case of $SU(1,1)$, it is important to work with the entire nonconnected $ax+b$ group. What to do in the connected case remains an open problem.

In fact, in an earlier paper~\cite{BG} the first two authors found a \emph{universal deformation formula} for the actions of the connected component of the $ax+b$ group (and, more generally, of K\"ahlerian Lie groups) on C$^*$-algebras. Unfortunately, despite the claims in~\cite{NT} and~\cite{BGNT1}, this formula turned out to define a coisometric but nonunitary \emph{dual $2$-cocycle}, which is the term we prefer to use in the analytical setting instead of the ``twist". See Remark~\ref{rem:ax+b} below and erratum to~\cite{BGNT1} for further discussion.

If we do consider the nonconnected $ax+b$ group, then even earlier, Baaj and Skandalis constructed its quantization as a bicrossed product of two copies of $\R^*$~\cite{BS2}. One disadvantage of this construction is that from the outset it is not clear how justifiable it is to call their quantum group a quantization of the $ax+b$ group. Some justification was given later by Vaes and Vainerman~\cite{VV}.

The present work grew out of the natural question how the constructions in \cite{BS2}, \cite{BG} and \cite{Stachura} are related. As we already said, we found out that~\cite{BG} does not lead to a unitary cocycle and therefore cannot actually be used to quantize the $ax+b$ group. But the constructions in~\cite{BS2} and~\cite{Stachura} turned out to be equivalent, as was conjectured by Stachura. Furthermore, we found an interpretation of the Jordanian twist/Stachura cocycle in terms of the Kohn--Nirenberg quantization, which allowed us to construct quantum analogues of a class of semidirect products $Q\ltimes V$. We also realized that these constructions have a very natural description within De Commer's analytic version of the Hopf--Galois theory~\cite{DC,DC2}.

\medskip

In more detail, the main results and organization of the paper are as follows. After a short preliminary section, we begin by discussing $G$-Galois objects for general locally compact groups~$G$ in Section~\ref{sec:Galois}. These are von Neumann algebras equipped with actions of $G$ that are in an appropriate sense free and transitive. For compact groups such actions are known  in the operator algebra literature as full multiplicity ergodic actions. Using recent results of De Commer~\cite{DC2} we show that the $G$-Galois objects with underlying algebras factors of type I are classified by certain second cohomology classes on $G$ (Theorem~\ref{thm:I-Galois}). For finite groups such a result quickly leads to a complete classification of $G$-Galois objects obtained by Wassermann~\cite{Wa} (although the result is not very explicit there, see~\cite{NTcoc}) and Davydov~\cite{Da}. For infinite groups the situation is of course more complicated, as in general there exist Galois objects built on non-type-I algebras.

We show next that under extra assumptions a $G$-Galois object of the form $(B(H),\Ad\pi)$, where $\pi$ is a projective representation of $G$ on $H$, defines a dual unitary $2$-cocycle (Proposition~\ref{prop:dual-cocycle}). We do not know whether these assumptions are always automatically satisfied, but we show that they are if $\pi$ is a genuine representation (Theorem~\ref{thm:genuine-rep}). This implies that if the group von Neumann algebra $W^*(G)$ of a nontrivial group~$G$ is a type I factor, then there exists a canonical nontrivial cohomology class of dual cocycles on~$G$. This gives probably the shortest explanation why a quantization of, for example, the $ax+b$ group exists at the operator algebraic level. At the Lie (bi)algebra level this is related to Drinfeld's result on quantization of Frobenius Lie algebras~\cite{Dr}.

\smallskip

The construction of the dual cocycle in Section~\ref{sec:Galois} is, however, rather inexplicit and in Section~\ref{sec:KN} we find a formula for such a cocycle for the semidirect products $G=Q\ltimes V$ such that~$V$ is abelian and the action of $Q$ on the dual abelian group $\hat V$ has a free orbit of full measure (Assumption~\ref{Fro}). It is well-known that producing a dual $2$-cocycle/twist is essentially equivalent to finding a $G$-equivariant deformation of an appropriate algebra of functions on~$G$. Our assumptions on $G=Q\ltimes V$ imply that we can identify $L^2(G)$ with $L^2(V\times\hat V)$ in a $G$-equivariant way. The Kohh--Nirenberg quantization of $V\times\hat V$ provides then a deformation of $L^2(G)$ and gives rise to a dual unitary $2$-cocycle $\Omega$ (Theorem~\ref{thm:KN1}). The cohomology class of this cocycle is exactly the one we defined in Section~\ref{sec:Galois} (Theorem~\ref{thm:KN2}).

In fact, there are two versions of the Kohn--Nirenberg quantization, so we get two cohomologous dual cocycles. In the case of the $ax+b$ group we show that one of these cocycles coincides with Stachura's cocycle (Proposition~\ref{prop:Stachura}).

\smallskip

Finally, in Section~\ref{sec:bicross} we consider the bicrossed product defined by a matched pair of two copies of $Q$ in $Q\ltimes\hat V$. We show that this quantum group is self-dual and isomorphic to $(W^*(G),\Omega\Dhat(\cdot)\Omega^*)$ (Theorem~\ref{thm:bicrossed} and Corollary~\ref{cor:bicrossed}). This is achieved by showing that the multiplicative unitary of the twisted quantum group $(W^*(G),\Omega\Dhat(\cdot)\Omega^*)$ is given by a \emph{pentagonal transformation} on $Q\times\hat V$ (Theorem~\ref{thm:mult-unitary}) and by applying the Baaj--Skandalis procedure of reconstructing a matched pair of groups from such a transformation~\cite{BS3}.

\section{Preliminaries}

Let $G$ be a locally compact group. We fix a left invariant Haar measure $dg$ on $G$ and denote by $L^p(G)$, $p\in[1,\infty]$, the associated function spaces.

The modular function $\Delta=\Delta_G$ is defined by the relation
$$
\int_G f(hg)\,dh=\Delta(g)^{-1}\int_G f(h)\,dh\ \ \text{for}\ \ f\in C_c(G).
$$
Then $\Delta(g)^{-1}dg$ is a right invariant Haar measure on $G$.

In a similar way, if $q\in\Aut(G)$, then the modulus $|q|=|q|_G$ of $q$ is defined by the identity
$$
\int_Gf(q(h))\,dh=|q|^{-1}\int_Gf(h)\,dh\ \ \text{for}\ \ f\in C_c(G).
$$

We let $\lambda$ and $\rho$  be the left and right regular (unitary) representations of $G$ on $L^2(G)$:
\begin{align*}
(\lambda_gf)(h)=f(g^{-1}h)\quad\mbox{and}\quad(\rho_gf)(h)=\Delta(g)^{1/2}f(hg).
\end{align*}

For a function $f$ on $G$ we define a function $\check f$ by
$$
\check f(g):=f(g^{-1}).
$$
We also let $J=J_G$ and $\hat J=\hat J_G$ be the modular conjugations of $L^\infty(G)$ and $W^*(G):=\lambda(G)''$:
$$
Jf:=\bar f\quad\mbox{and}\quad \hat Jf:=\Delta^{-1/2}\bar{\check f},
$$
and we use the shorthand notation
\begin{align}
\label{J}
\mathcal J:=J\hat J=\hat JJ,\ \ \text{so that}\ \ \CJ f=\Delta^{-1/2}\check f.
\end{align}

The multiplicative unitary $W=W_G\colon L^2(G)\otimes L^2(G)\to L^2(G)\otimes L^2(G)$ of $G$ is defined by
$$
(Wf)(g,h)=f(g,g^{-1}h).
$$
The multiplicative unitary $\hat W=\hat W_G$ of the dual quantum group is defined by
$$
\hat W=W^*_{21},\ \ \text{so that}\ \ (\hat Wf)(g,h)=f(hg,h).
$$

The  coproduct $\Dhat\colon W^*(G)\to W^*(G)\bar\otimes W^*(G)$ on the group von Neumann algebra $W^*(G)$ is defined  by $\Dhat(\lambda_g)=\lambda_g\otimes\lambda_g$. We then have
$$
\Dhat(x)=\hat W^*(1\otimes x)\hat W\ \ \text{for}\ \ x\in W^*(G).
$$

Let now  $V$ be a locally compact Abelian group and $\hat V$ be its Pontryagin dual.
Elements of~$V$ will be denoted by the Latin letters $v,v_j,v'\dots$ while elements of $\hat V$ will be denoted by
the Greek letters $\xi,\xi_j,\xi'\dots$. We will use additive notation both on $V$ and  on $\hat V$.

The duality paring $\hat V\times V\to \T$ will be denoted by $e^{i\langle\xi,v\rangle}$. This is just a notation, we do not claim that
there is an exponential function here. We also let
$$
e^{-i\langle\xi,v\rangle}:=\overline{e^{i\langle\xi,v\rangle}}=e^{i\langle-\xi,v\rangle}=e^{i\langle\xi,-v\rangle}.
$$

We fix  a Haar  measure $dv$ on $V$ and we normalize the Haar measure $d\xi$ of $\hat V$ so that the Fourier
transform $\CF_V$ defined by
$$
(\CF_Vf)(\xi):=\int_V e^{-i\langle\xi,v\rangle} f(v)\,dv
$$
becomes unitary from $L^2(V)$ to $L^2(\hat V)$. For functions in several variables only one of which is in $V$, we use the same symbol $\CF_V$ to denote the partial Fourier transform in that variable.

\section{Galois objects and dual cocycles}\label{sec:Galois}

\subsection{Projective representations and Galois objects}\label{ss:HG}

Hopf--Galois objects is a well-studied topic in Hopf algebra theory. An adaption of this notion to locally compact quantum groups has been developed by De Commer~\cite{DC}. Let us recall the main definitions. We will do this for genuine groups, as this is mainly the case we are interested in, but it will be important for us that the theory is developed at least for locally compact groups and their duals.

\smallskip

Let $G$ be a locally compact group and $\beta$ be an action of $G$ on a von Neumann algebra $\N$. Such an action is called integrable if the operator-valued weight
$$
P\colon \N\to \N^\beta,\ \ \N_+\ni a\mapsto\int_G\beta_g(a)\,dg,
$$
is semifinite. If $\beta$ is in addition ergodic, then we get a normal semifinite faithful weight $\tilde\varphi$ on~$\N$ such that $P(a)=\tilde\varphi(a)1$. Note that
\begin{equation}\label{eq:scaling}
\tilde\varphi(\beta_g(a))=\Delta(g)^{-1}\tilde\varphi(a)\ \ \text{for all}\ \ a\in\N_+.
\end{equation}
We can then define an isometric map
$$
{\mathcal G}\colon L^2(\N,\tilde\varphi)\otimes L^2(\N,\tilde\varphi)\to L^2(G;L^2(\N,\tilde\varphi)),\ \ {\mathcal G}\big(\tilde\Lambda(a)\otimes \tilde\Lambda(b)\big)(g)=\tilde\Lambda(\beta_g(a)b),
$$
where $\tilde\Lambda\colon{\mathfrak N}_{\tilde\varphi}\to L^2(\N,\tilde\varphi)$ denotes the GNS-map.
The pair $(\N,\beta)$ consisting of a von Neumann algebra $\N$ and an ergodic integrable action $\beta$ of~$G$ on~$\N$ is called a {\em $G$-Galois object} if the {\em Galois map} $\mathcal G$ is unitary.

The following characterization of Galois objects is often easier to use.

\begin{proposition}\label{prop:Galois-criterion}
A pair $(\N,\beta)$ consisting of a von Neumann algebra $\N$ and an ergodic integrable action $\beta$ of a locally compact group $G$ on $\N$ is a $G$-Galois object if and only if $\N\rtimes G$ is a factor, which is then necessarily of type I.
\end{proposition}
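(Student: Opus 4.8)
The strategy is to reformulate the Galois condition in terms of the crossed product $\N\rtimes G$ via the standard identification of the crossed product with an algebra acting on $L^2(G;L^2(\N,\tilde\varphi))$, and then to recognize the Galois map $\mathcal G$ as an intertwiner between two representations. First I would recall that $\N\rtimes G$ is generated by $\pi_\beta(\N)$ and $\lambda\otimes 1$ on $L^2(G;L^2(\N,\tilde\varphi))$, where $(\pi_\beta(a)\zeta)(g)=\beta_{g}^{-1}(a)\zeta(g)$ (or $\beta_g(a)$ depending on conventions); ergodicity gives the dual weight and the GNS space $L^2(\N\rtimes G)$, which as a Hilbert space is again $L^2(G;L^2(\N,\tilde\varphi))$. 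The commutant $(\N\rtimes G)'$ is, by Takesaki duality / the standard structure theory, generated by $\beta$-twisted right translations together with $J_{\tilde\varphi}\N J_{\tilde\varphi}$. The key computational step is to check that the Galois map $\mathcal G$ conjugates the pair $(\N\bar\otimes 1,\ 1\bar\otimes\N)$ acting on $L^2(\N,\tilde\varphi)\otimes L^2(\N,\tilde\varphi)$ to something built out of $\N\rtimes G$ and its commutant on $L^2(G;L^2(\N,\tilde\varphi))$: concretely, $\mathcal G(a\otimes 1)\mathcal G^*$ should land in $\pi_\beta(\N)$ up to the crossed-product structure, and $\mathcal G(1\otimes b)\mathcal G^*$ in the commutant side.

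Granting this, unitarity of $\mathcal G$ becomes equivalent to the statement that the von Neumann algebra generated by $\N\bar\otimes\C$ and $\C\bar\otimes\N$ on $L^2(\N,\tilde\varphi)^{\otimes 2}$ is all of $B(L^2(\N,\tilde\varphi))\bar\otimes\C$ after compression — i.e. that $\N$ acts standardly and the relevant commutation relations force $\N\rtimes G$ to be in standard form with $\mathcal G$ implementing it; this is exactly the assertion that $\N\rtimes G$ is a type I factor with $L^2(\N,\tilde\varphi)$ (roughly) the multiplicity space. The cleaner route, which I expect the authors take, is to invoke De Commer's results (cited as \cite{DC,DC2}): a Galois object for $G$ corresponds, after passing to crossed products, to the reflected object, and the crossed product $\N\rtimes G$ is the algebra on which the reflected (dual) quantum group acts as a Galois object; the factoriality and type I-ness are then read off from the fact that the reflection of a Galois object across $G$ is a Galois object for $\hat G$, whose underlying von Neumann algebra — when $G$ is an honest group — is $B(H)$ for the Hilbert space $H=L^2(\N,\tilde\varphi)$. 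In the converse direction, if $\N\rtimes G$ is a type I factor $B(H)$, one reconstructs $\mathcal G$ as the unitary implementing the standard representation and checks the Galois identity directly.

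The main obstacle is the bookkeeping around weights and modular data: the action $\beta$ is only integrable, not assumed to preserve a state, so $\tilde\varphi$ is a genuinely unbounded weight, the scaling relation \eqref{eq:scaling} must be used to control how $\tilde\varphi$ interacts with $\lambda$ and with the Haar measure $dg$, and one must be careful that the GNS maps $\tilde\Lambda$ and the dual-weight GNS map are being identified compatibly so that $\mathcal G$ really is the Galois map and not a twisted variant. A secondary subtlety is verifying that "factor" automatically upgrades to "type I": this follows because $\N\rtimes G$ acting on $L^2(\N,\tilde\varphi)^{\otimes 2}$ via $\mathcal G$ (when $\mathcal G$ is unitary) has a cyclic and separating vector of a special form, and more structurally because a crossed product of an ergodic action that is also a factor must be the "linking" algebra of a Galois object, hence type I by De Commer's theory — but if one wants a self-contained argument one shows the center is trivial via ergodicity of $\beta$ plus freeness encoded in unitarity of $\mathcal G$, and type I-ness via exhibiting minimal projections coming from the ergodic state space. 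I would present the proof by reducing to these cited structural results rather than redoing the modular analysis from scratch.
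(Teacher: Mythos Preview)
Your proposal circles around the right objects but misses the two short observations that make the paper's proof almost immediate, and in places it is circular.

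The paper does not analyze $\mathcal G$ as an intertwiner between $\N\bar\otimes 1$, $1\bar\otimes\N$ and the crossed product, nor does it invoke the reflection/I-factorial theory from \cite{DC2}. Instead it uses two much simpler facts. First, integrability gives a canonical representation $\eta$ of $\N\rtimes G$ on $L^2(\N,\tilde\varphi)$, and \cite[Theorem~2.1]{DC} states precisely that $(\N,\beta)$ is a Galois object \emph{if and only if $\eta$ is faithful}. You do not mention this criterion, and it is the real content borrowed from De Commer. Second, ergodicity alone forces $\eta$ to be surjective: since the unitaries $\eta(\lambda_g)$ implement $\beta_g$ on $\N$, they implement an ergodic action on $\N'$ as well, so $\eta(\N\rtimes G)'=\N'\cap\eta(W^*(G))'=\C1$ and hence $\eta(\N\rtimes G)=B(L^2(\N,\tilde\varphi))$. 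Combining the two: if $\N\rtimes G$ is a factor then $\eta$ (being nonzero) is faithful, hence $(\N,\beta)$ is Galois; conversely, if $(\N,\beta)$ is Galois then $\eta$ is a faithful map onto $B(L^2(\N,\tilde\varphi))$, so $\N\rtimes G\cong B(L^2(\N,\tilde\varphi))$ is a type~I factor. No modular bookkeeping, minimal projections, or linking-algebra arguments are needed.

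Your ``cleaner route'' via reflection has a genuine gap in the direction factor $\Rightarrow$ Galois: reflection across $(\N,\beta)$ is only defined once you already know $(\N,\beta)$ is a Galois object, so you cannot use it to establish that property. And for the other direction, appealing to the reflected $\hat G$-Galois object to deduce type~I-ness is overkill compared to the one-line commutant computation above.
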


\bp
This is a simple consequence of results in~\cite[Section~2]{DC}. Indeed, the integrability assumption implies that $\N\rtimes G$
has a canonical representation $\eta$ on
$L^2(\N,\tilde\vf)$, and then by~\cite[Theorem~2.1]{DC} the pair $(\N,\beta)$ is a $G$-Galois object if and only if $\eta$
is faithful. If $\N\rtimes G$ is a factor, then $\eta$ is faithful, so we get one implication in the proposition.

Next, the ergodicity of the action $\beta$ implies that the action on $\N'$ by the automorphisms $\Ad\eta(\lambda_g)$ is ergodic as well, that is, $\N'\cap\eta(W^*(G))'=\C1$. It follows that $\eta(\N\rtimes G)=B(L^2(\N,\tilde\vf))$. Hence, if $(\N,\beta)$ is a Galois object, then $\N\rtimes G$ is a factor canonically isomorphic to  $B(L^2(\N,\tilde\vf))$.
\ep

We will be interested in $G$-Galois objects that are themselves factors of type I, in which case we say, following~\cite{DC2}, that $(\N,\beta)$ is a {\em I-factorial $G$-Galois object}. Identifying $\N$ with $B(H)$ for a Hilbert space $H$, we then get a projective unitary representation $\pi\colon G\to PU(H)$ such that $\beta_g=\Ad \pi(g)$. Note that the equivalence class of $\pi$ is uniquely determined by $(\N,\beta)$.

\begin{remark}
Ergodicity of $\beta=\Ad\pi$ is equivalent to irreducibility of $\pi$. Assuming ergodicity, integrability of the action $\Ad\pi$ is equivalent to square-integrability of the irreducible projective representation $\pi$, meaning that there are nonzero vectors $\xi,\zeta$ for which the function $g\mapsto(\pi(g)\xi,\zeta)$ is square-integrable. This observation goes back to~\cite[Example~2.8, Chapter~III]{CT}, but let us give some details.

Assume first that the action $\Ad\pi$ is integrable. Then the domain of definition of the weight~$\tilde\varphi$ must contain a nonzero rank-one operator $\theta_{\xi,\xi}$. Then, for every $\zeta\in H$, the function $g\mapsto((\Ad\pi(g))(\theta_{\xi,\xi})\zeta,\zeta)=|(\pi(g)\xi,\zeta)|^2$ is integrable, so $\pi$ is square-integrable.

Conversely, assume $\pi$ is square-integrable. Then by~\cite{DM} (for genuine representations) and by~\cite{A} (for projective representations) there exists a unique positive, possibly unbounded, nonsingular operator $K$ on $H$, called the \emph{Duflo--Moore formal degree operator}, such that
$$
\int_G|(\pi(g)\xi,\zeta)|^2dg=\|K^{1/2}\xi\|^2\|\zeta\|^2\ \ \text{for all}\ \ \xi\in\Dom(K^{1/2})\ \ \text{and}\ \ \zeta\in H.
$$
This implies that $\theta_{\xi,\xi}$ is in the domain of definition of the weight $\tilde\varphi$ and $\tilde\varphi(\theta_{\xi,\xi})=\|K^{1/2}\xi\|^2$. It follows that the action $\Ad\pi$ is integrable and
$$
\tilde\varphi=\Tr(K^{1/2}\cdot K^{1/2}).
$$

Note for future use that property~\eqref{eq:scaling} translates into
\begin{equation}\label{eq:scaling2}
(\Ad\pi(g))(K)=\Delta(g)K.
\end{equation}
Since the action $\Ad\pi$ is ergodic, this determines $K$ uniquely up to a scalar factor. In particular, as was already observed in~\cite{DM}, if $G$ is unimodular then $K$ is scalar, and otherwise $K$ is unbounded.
\end{remark}

\begin{remark}\label{rem:genuine-q}
By~\cite{DC}, given a $G$-Galois object, we get a locally compact quantum group~$G'$ obtained by \emph{reflecting $G$ across the Galois object}. If $G$ is abelian, then $G'=G$. But if $G$ is a nonabelian  genuine locally compact group and our Galois object has the form $(B(H),\Ad\pi)$ for a projective representation $\pi$ of $G$, then $G'$ is a genuine quantum group.

Indeed, assume $G'$ is a group. By the general theory we know that $B(H)$ is a $G'$-Galois object with respect to an action $\beta'$ of~$G'$ commuting with the action of $G$, see~\cite{DC}. There exist scalars $\chi_g(g')\in\T$ such that
$$
\beta'_{g'}(\tilde\pi(g))=\chi_g(g')\tilde\pi(g) \ \text{for all}\ \ g\in G,\ g'\in G',
$$
where $\tilde\pi(g)$ is any lift of $\pi(g)$ to $U(H)$.
Then $\chi_g$ is a character of $G'$. Furthermore, by ergodicity of the action $\beta'$, if $\chi_{g_1}=\chi_{g_2}$ for some $g_1,g_2\in G$, then $\tilde\pi(g_1)$ and $\tilde\pi(g_2)$ coincide up to a scalar factor, which by surjectivity of the Galois map for $(B(H),\Ad\pi)$ is possible only when $g_1=g_2$. Therefore the map $g\mapsto\chi_g$ is an injective homomorphism from $G$ into the group of characters of $G'$. Hence $G$ is abelian, which contradicts our assumption.
\end{remark}

By a recent duality result of De Commer~\cite{DC2}, for any locally compact quantum group $G$, there is a bijection between the isomorphism classes of I-factorial $G$-Galois object and I-factorial $\hat G$-Galois objects. This bijection is constructed as follows. Suppose we are given a I-factorial $G$-Galois object $(\N,\beta)$. Then $\N'\cap (\N\rtimes G)$, equipped with the dual action, becomes a I-factorial Galois object for $\hat G$. (More precisely, we rather get a Galois object for the opposite comultiplication on $L^\infty(\hat G)$ and then an additional application of modular conjugations is needed to really get a Galois object for $\hat G$, but this is unnecessary in our setting of genuine groups and their duals.)

There is a simple class of $\hat G$-Galois objects constructed as follows. Assume from now on that~$G$ is second countable. Let $\omega$ be a $\T$-valued Borel $2$-cocycle on $G$. Consider the $\omega$-twisted left regular representation of $G$ on $\lambda^\omega\colon G\to B(L^2(G))$ defined by
$$
(\lambda^\omega_gf)(h)=\omega(g,g^{-1}h)f(g^{-1}h),
$$
satisfying $\lambda^\omega_g\lambda^\omega_h=\omega(g,h)\lambda^\omega_{gh}$, and let $W^*(G;\omega):=\lambda^\omega(G)''\subset\CB(L^2(G))$. Then $W^*(G;\omega)$ equipped with the coaction $\lambda^{\omega}_g\mapsto \lambda^{\omega}_g\otimes\lambda_g$ of $G$ (or in other words, the action of $\hat G$) is a $\hat G$-Galois object, see~\cite[Section~5]{DC} for this statement in the setting of locally compact quantum groups.

In fact, this covers all possible I-factorial $\hat G$-Galois objects and by duality we get a description of the I-factorial $G$-Galois objects:

\begin{theorem}\label{thm:I-Galois}
For any second countable locally compact group $G$, there is a bijection between the isomorphism classes of I-factorial $G$-Galois objects and the cohomology classes $[\omega]\in H^2(G;\T)$ such that the twisted group von Neumann algebra $W^*(G;\omega)$ is a type I factor.
\end{theorem}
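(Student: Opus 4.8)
The plan is to follow De~Commer's Hopf--Galois duality: by \cite{DC2} there is a bijection between the isomorphism classes of I-factorial $G$-Galois objects and of I-factorial $\hat G$-Galois objects, so it suffices to show that the assignment $[\omega]\mapsto\big(W^*(G;\omega),\alpha_\omega\big)$, with $\alpha_\omega$ the coaction $\lambda^\omega_g\mapsto\lambda^\omega_g\otimes\lambda_g$ of $\hat G$ recalled before the theorem, is a well-defined bijection from the set of classes $[\omega]\in H^2(G;\T)$ with $W^*(G;\omega)$ a type I factor onto the set of isomorphism classes of I-factorial $\hat G$-Galois objects; the bijection in the theorem is then the composite of the two. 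By \cite[Section~5]{DC} each $\big(W^*(G;\omega),\alpha_\omega\big)$ is a $\hat G$-Galois object, I-factorial by definition precisely when $W^*(G;\omega)$ is a type I factor, so only well-definedness, injectivity and surjectivity of that assignment need to be verified.

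Well-definedness and injectivity are routine. If $\omega'=\omega\cdot\partial b$ for a Borel $b\colon G\to\T$, with $(\partial b)(g,h)=b(g)b(h)\overline{b(gh)}$, then conjugation by the multiplication operator $M_b$ on $L^2(G)$ sends $\lambda^\omega_g$ to $b(g)\lambda^{\omega'}_g$ and hence restricts to a $*$-isomorphism $W^*(G;\omega)\to W^*(G;\omega')$; since $M_b$ commutes with every $\lambda_g$, this isomorphism intertwines $\alpha_\omega$ and $\alpha_{\omega'}$, so the two $\hat G$-Galois objects are isomorphic. Conversely, if $\theta\colon W^*(G;\omega_1)\to W^*(G;\omega_2)$ is an isomorphism of $\hat G$-Galois objects, then $\alpha_{\omega_2}\big(\theta(\lambda^{\omega_1}_g)\big)=\theta(\lambda^{\omega_1}_g)\otimes\lambda_g$, so $\theta(\lambda^{\omega_1}_g)\,(\lambda^{\omega_2}_g)^{*}$ lies in the fixed-point algebra of $\alpha_{\omega_2}$, which is $\C1$ by ergodicity; hence $\theta(\lambda^{\omega_1}_g)=c(g)\lambda^{\omega_2}_g$ for a $\T$-valued function $c$ on $G$, Borel because $\theta$ is normal and $g\mapsto\lambda^{\omega_i}_g$ is Borel. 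Applying $\theta$ to $\lambda^{\omega_1}_g\lambda^{\omega_1}_h=\omega_1(g,h)\lambda^{\omega_1}_{gh}$ and comparing gives $\omega_1=\omega_2\cdot\partial c$, so $[\omega_1]=[\omega_2]$.

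The substance is surjectivity, i.e.\ that every I-factorial $\hat G$-Galois object $(\M,\alpha)$ is isomorphic to some $\big(W^*(G;\omega),\alpha_\omega\big)$. I would pass through De~Commer's duality to the $G$-side: the I-factorial $G$-Galois object dual to $(\M,\alpha)$ is, by the discussion preceding Remark~\ref{rem:genuine-q}, of the form $(B(H),\Ad\pi)$ for an irreducible and square-integrable projective unitary representation $\pi$ of $G$, and choosing lifts $\tilde\pi(g)\in U(H)$ produces a Borel $2$-cocycle $\omega$ whose class depends only on $(B(H),\Ad\pi)$. The key point is then that $[\omega]$ recovers $\pi$: the Galois hypothesis forces $\pi$ to be, up to equivalence, the unique irreducible square-integrable $\omega$-representation. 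Indeed, since $W^*(G;\omega)$ is a type I factor one may write $\lambda^\omega\cong\sigma\otimes 1$ for an irreducible $\omega$-representation $\sigma$; this $\sigma$ is square-integrable (it is a subrepresentation of $\lambda^\omega$, whose matrix coefficients against $C_c(G)$ have compact support, and the Duflo--Moore operator of $\lambda^\omega$ splits accordingly), and any irreducible square-integrable $\omega$-representation is a subrepresentation of $\lambda^\omega$ by \cite{DM} (respectively \cite{A} in the projective case), hence equivalent to $\sigma$; so $\pi\cong\sigma$, and $(B(H),\Ad\pi)$ is the I-factorial $G$-Galois object determined, through the same chain of identifications run backwards, by $\big(W^*(G;\omega),\alpha_\omega\big)$. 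A useful bookkeeping device here, which also makes Proposition~\ref{prop:Galois-criterion} directly applicable, is the spatial isomorphism $B(H)\rtimes_{\Ad\pi}G\cong B(H)\bar\otimes W^*(G;\bar\omega)$ (the crossed product being generated by $B(H)\otimes 1$ and the unitaries $\tilde\pi(g)\otimes\lambda^{\bar\omega}_g$): it shows at once that $(B(H),\Ad\pi)$ is a Galois object exactly when $W^*(G;\omega)$ is a factor, which is then automatically of type I.

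The step I expect to be the main obstacle is not any individual estimate but the bookkeeping of conjugations in De~Commer's duality: it passes between $G$- and $\hat G$-Galois objects via $\N'\cap(\N\rtimes G)$ and a priori through the opposite comultiplication on $W^*(G)$ and the modular conjugations, and one has to be sure that the cocycle appearing on the $\hat G$-side is $\omega$ and not $\bar\omega$, and that it is exactly the one labelling $W^*(G;\omega)$. As noted in the text, the modular conjugations are unnecessary in the present setting of genuine groups and their duals, which keeps this tractable, but carrying it out correctly is where essentially all the work beyond citing \cite{DC}, \cite{DC2}, \cite{DM} and \cite{A} resides.
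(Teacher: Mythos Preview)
Your approach is essentially the paper's: both rest on De~Commer's I-factorial duality together with the crossed product identification $B(H)\rtimes_{\Ad\pi}G\cong B(H)\bar\otimes W^*(G;\bar\omega)$. The paper, however, organizes the argument around the direct bijection $(B(H),\Ad\pi)\mapsto[\omega_\pi]$ with explicit inverse $[\omega]\mapsto(W^*(G;\omega),\Ad\lambda^\omega)$, and proves two computational lemmas: one showing that the $\hat G$-dual of $(B(H),\Ad\pi)$ is $(W^*(G;\bar\omega),\alpha)$ (this is precisely your ``bookkeeping device'', but with the dual action tracked through the isomorphism), and one showing that the $G$-dual of $(W^*(G;\bar\omega),\alpha)$ is $(W^*(G;\omega),\Ad\lambda^\omega)$.

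Your surjectivity argument has a genuine gap, and the uniqueness detour does not close it. You extract $\omega$ from $\pi$ and correctly argue that $\pi$ is, up to equivalence, the unique irreducible square-integrable $\omega$-representation; but this only shows that $[\omega]$ determines the isomorphism class of $(B(H),\Ad\pi)$. It does \emph{not} show that the De~Commer dual of $(W^*(G;\omega),\alpha_\omega)$ is $(B(H),\Ad\pi)$, because you never compute what that dual actually is --- the phrase ``through the same chain of identifications run backwards'' is an assertion, not an argument. What is needed is exactly to push the crossed product isomorphism further: under $B(H)\rtimes G\cong B(H)\bar\otimes W^*(G;\bar\omega)$ one has $B(H)'\cap(B(H)\rtimes G)\cong 1\otimes W^*(G;\bar\omega)$, and tracking the dual action identifies it with $\alpha_{\bar\omega}$. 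This single computation (the paper's first lemma) already gives $(\M,\alpha)\cong(W^*(G;\bar\omega),\alpha_{\bar\omega})$ and hence surjectivity; the uniqueness of $\omega$-representations is then superfluous. So the isomorphism you relegate to a ``useful bookkeeping device'' is in fact the core of the proof, and you should promote it accordingly and carry the dual action through it --- this is also where the $\omega$ versus $\bar\omega$ issue you flag gets resolved.
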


Here $H^2(G;\T)$ denotes the Moore cohomology of $G$, which is based on Borel cochains~\cite{Moore}.

\smallskip

Explicitly, the bijection in the theorem is defined as follows. To a I-factorial $G$-Galois object $(B(H),\Ad\pi)$ we associate the cohomology class $[\omega_\pi]\in H^2(G;\T)$ defined by the projective representation $\pi$ of $G$. Recall that this means that we lift $\pi$ to a Borel map $\tilde\pi\colon G\to U(H)$ and define a Borel $\T$-valued $2$-cocycle $\omega_\pi$ on $G$ by the identity
$$
\tilde\pi(g)\tilde\pi(h)=\omega_\pi(g,h)\tilde\pi(gh).
$$
The inverse map  associates to $[\omega]\in H^2(G;\T)$ (such that $W^*(G;\omega)$ is a type I factor) the isomorphism class of the $G$-Galois
object $(W^*(G;\omega),\Ad\lambda^\omega)$.

\smallskip

For finite groups $G$, this theorem is essentially due to Wassermann~\cite{Wa} (see also~\cite{NTcoc}), as well as to Davydov~\cite{Da} in the purely algebraic setting.

\smallskip

We divide the proof of Theorem~\ref{thm:I-Galois} into a couple of lemmas. Let $(B(H),\Ad\pi)$ be a I-factorial $G$-Galois object and $\omega$ be the cocycle defined by a lift $\tilde\pi$ of $\pi$.

\begin{lemma}
The $\hat G$-Galois object associated with the I-factorial $G$-Galois object $(B(H),\Ad\pi)$ is isomorphic to  $(W^*(G;\bar \omega),\alpha)$, where the coaction $\alpha$ of $G$
is defined by
\begin{equation}\label{eq:coaction}
\alpha(\lambda^{\bar \omega}_g)=\lambda^{\bar \omega}_g\otimes\lambda_g.
\end{equation}
\end{lemma}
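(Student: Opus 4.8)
The plan is to realise the crossed product $\N\rtimes G$, with $\N=B(H)$ and $\beta=\Ad\pi$, concretely on $L^2(G)\otimes H$ and then to ``untwist'' it using a Borel lift $\tilde\pi\colon G\to U(H)$ of $\pi$. Recall the standard model: $\N\rtimes_\beta G$ is generated by the operators $\pi_\beta(x)$, $x\in\N$, with $(\pi_\beta(x)\xi)(g)=\beta_{g^{-1}}(x)\xi(g)$, together with the $\lambda_g\otimes\un$, $g\in G$; its dual coaction $\delta$ of $G$ (equivalently, the action of $\hat G$) is determined by $\delta(\pi_\beta(x))=\pi_\beta(x)\otimes\un$ and $\delta(\lambda_g\otimes\un)=(\lambda_g\otimes\un)\otimes\lambda_g$. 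By De Commer's duality, as recalled before Theorem~\ref{thm:I-Galois} (and since for a genuine group no modular-conjugation correction is needed), the $\hat G$-Galois object we must identify is $\pi_\beta(\N)'\cap(\N\rtimes G)$ together with the restriction of $\delta$.

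Normalise $\omega$ so that $\omega(e,\cdot)=\omega(\cdot,e)=1$ and the lift so that $\tilde\pi(e)=\un$, and consider the unitary $U$ on $L^2(G;H)=L^2(G)\otimes H$ given by $(U\xi)(g)=\tilde\pi(g)\xi(g)$; this is well defined and unitary even though $\tilde\pi$ is merely Borel. A direct computation, using only $\tilde\pi(g)\tilde\pi(h)=\omega(g,h)\tilde\pi(gh)$ and $|\omega|=1$, will give $U\pi_\beta(x)U^*=\un\otimes x$ (pointwise multiplication by $x$) and $U(\lambda_g\otimes\un)U^*=\nu_g\otimes\tilde\pi(g)$, where $(\nu_g\zeta)(h)=\omega(g^{-1},h)\,\overline{\omega(g^{-1},g)}\,\zeta(g^{-1}h)$ for $\zeta\in L^2(G)$. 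The $2$-cocycle identity applied to $(g,g^{-1},h)$ shows that $\omega(g,g^{-1}h)\,\omega(g^{-1},h)=\omega(g,g^{-1})$ does not depend on $h$, whence $\nu_g=c_g\,\lambda^{\bar\omega}_g$ for the character $c_g=\omega(g,g^{-1})\,\overline{\omega(g^{-1},g)}$; replacing $U$ by multiplication by $\overline{c_g}\tilde\pi(g)$ (again a Borel unitary) we may and do arrange $\nu_g=\lambda^{\bar\omega}_g$ exactly. In particular $\Ad U$ carries $\N\rtimes G$ onto $W^*(G;\bar\omega)\bar\otimes B(H)$ and $\pi_\beta(\N)$ onto $\un\otimes B(H)$.

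It then follows that $\Ad U$ maps the relative commutant $\pi_\beta(\N)'\cap(\N\rtimes G)$ onto $(\CB(L^2(G))\otimes\un)\cap\big(W^*(G;\bar\omega)\bar\otimes B(H)\big)=W^*(G;\bar\omega)\otimes\un$ (slice by a normal state on $B(H)$ for the one nontrivial inclusion), which we identify with $W^*(G;\bar\omega)$ via $\lambda^{\bar\omega}_g\leftrightarrow\lambda^{\bar\omega}_g\otimes\un$. Finally, transporting $\delta$ through $\Ad U$ and writing $U(\lambda_g\otimes\un)U^*=(\lambda^{\bar\omega}_g\otimes\un)(\un\otimes\tilde\pi(g))$, the relations $\delta(\pi_\beta(\cdot))=\pi_\beta(\cdot)\otimes\un$ and $\delta(\lambda_g\otimes\un)=(\lambda_g\otimes\un)\otimes\lambda_g$ force $\delta(\lambda^{\bar\omega}_g\otimes\un)=(\lambda^{\bar\omega}_g\otimes\un)\otimes\lambda_g$ on the relative commutant, i.e.\ the restricted coaction is precisely $\alpha(\lambda^{\bar\omega}_g)=\lambda^{\bar\omega}_g\otimes\lambda_g$, as claimed.

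The substantive work is the bookkeeping in the second and fourth paragraphs: verifying that the twisting scalars combine to produce the conjugate cocycle $\bar\omega$ rather than $\omega$, that the leftover scalar function is a genuine character which can be absorbed, and that all conventions match up — the left Haar measure, the chosen presentation of the crossed product and of its dual coaction, and the precise form of De Commer's duality for a genuine group and its dual (including the point that no modular-conjugation twist intervenes here). None of this is deep, but keeping the factors of $\omega$ consistently placed is the only real care required.
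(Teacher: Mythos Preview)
Your proof is correct and follows essentially the same route as the paper: both conjugate the crossed product by the unitary $(U\xi)(g)=\tilde\pi(g)\xi(g)$ on $L^2(G;H)$ to exhibit $B(H)\rtimes G\cong B(H)\bar\otimes W^*(G;\bar\omega)$, and your explicit treatment of the relative commutant and the dual coaction merely spells out what the paper leaves implicit. One small simplification: under your normalisation $\omega(e,\cdot)=\omega(\cdot,e)=1$ the cocycle identity on $(g,g^{-1},g)$ gives $\omega(g,g^{-1})=\omega(g^{-1},g)$, so your scalar $c_g=\omega(g,g^{-1})\overline{\omega(g^{-1},g)}$ is identically $1$ and no absorption step is needed.
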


\bp
We have an isomorphism
$$
B(H)\rtimes G\cong B(H)\bar\otimes W^*(G;\bar \omega),\ \ B(H)\ni T\mapsto T\otimes1,\ \ \lambda_g\mapsto \tilde\pi(g)\otimes\lambda^{\bar \omega}_g.
$$
Explicitly, the crossed product $B(H)\rtimes G$ is the von Neumann subalgebra of $B(L^2(G;H))$ generated by the operators $\lambda_g$ considered as operators on $L^2(G;H)$ and the operators $\tilde T$ for $T\in B(H)$ defined by
$$
(\tilde T\xi)(g)=\pi(g)^*T\pi(g)\xi(g).
$$
Then the required isomorphism is given by $\Ad U$, where $U\colon L^2(G;H)\to L^2(G;H)$ is defined~as
$$
(U\xi)(g)=\tilde\pi(g)\xi(g).
$$
This gives the result.
\ep

In particular, it follows that $W^*(G;\bar \omega)$ is a type I factor.
As $JW^*(G;\omega)J=W^*(G;\bar\omega)$, the von Neumann algebra $W^*(G;\omega)$ is a type I factor as well. By De Commer's duality result~\cite{DC2} we conclude that the map associating $[\omega_\pi]$ to the isomorphism class of $(B(H),\Ad\pi)$ is injective and its image is contained in the set of cohomology classes $[\omega]$ such that $W^*(G;\omega)$ is a type I factor.

Consider now an arbitrary cohomology class $[\omega]\in H^2(G;\T)$ such that $W^*(G;\omega)$ is a type~I factor. To finish the proof it suffices to establish the following.

\begin{lemma} \label{lem:Galois}
The pair $(W^*(G,\omega),\Ad\lambda^\omega)$ is a $G$-Galois object.
\end{lemma}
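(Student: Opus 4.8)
The plan is to verify directly that the action $\Ad\lambda^\omega$ of $G$ on $\N:=W^*(G;\omega)$ is ergodic and integrable, and then to invoke Proposition~\ref{prop:Galois-criterion}: it suffices to show that $\N\rtimes G$ is a factor. Since we are already assuming that $\N=W^*(G;\omega)$ is a type~I factor, the strategy is to identify the crossed product concretely and show it equals $B(\CH)$ for a suitable Hilbert space $\CH$. First I would record that the action is well-defined: the coaction $\alpha$ of $G$ dual to $\Ad\lambda^\omega$ is the one sending $\lambda^\omega_g\mapsto\lambda^\omega_g\otimes\lambda_g$, which makes sense because $g\mapsto\lambda^\omega_g$ is an $\omega$-representation and $g\mapsto\lambda_g$ is a genuine representation, so $g\mapsto\lambda^\omega_g\otimes\lambda_g$ is again an $\omega$-representation and extends to a normal $*$-homomorphism $\N\to\N\bar\otimes W^*(G)$. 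Ergodicity of $\Ad\lambda^\omega$ amounts to the statement that the fixed-point algebra is $\C1$, equivalently that $\lambda(G)$ together with $\lambda^\omega(G)$ generates a von Neumann algebra with trivial center acting on the diagonal — but more efficiently, ergodicity follows because the $\omega$-twisted left regular representation is ``irreducible'' in the appropriate twisted sense (its commutant is the twisted right regular representation, and the two together act irreducibly on $L^2(G)$), a twisted analogue of the Peter--Weyl/irreducibility of $W_G$.

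The key computation is the identification of the crossed product. I would show that
$$
\N\rtimes G=\bigl(W^*(G;\omega)\rtimes_{\Ad\lambda^\omega}G\bigr)\cong W^*(G;\omega)\bar\otimes W^*(G;\bar\omega)\cong B(L^2(G)),
$$
where the last isomorphism uses that both $W^*(G;\omega)$ and $W^*(G;\bar\omega)=JW^*(G;\omega)J$ are type~I factors which are each other's commutants inside $B(L^2(G))$. The first isomorphism is the exact analogue of the one proved in the Lemma just above the statement (applied to $\omega$ in place of $\bar\omega$, i.e. to the projective representation $\pi=\lambda^\omega$ itself): concretely, inside $B(L^2(G;L^2(G)))=B(L^2(G)\otimes L^2(G))$ the crossed product is generated by the operators $\widetilde{\lambda^\omega_h}$, $(\widetilde{\lambda^\omega_h}\xi)(g)=(\lambda^\omega_g)^*\lambda^\omega_h\lambda^\omega_g\,\xi(g)$, and by the operators $\lambda_g\otimes1$ (the copy of $W^*(G)$ implementing the action), and conjugating by the unitary $(U\xi)(g)=(\lambda^\omega_g\otimes1)\xi(g)$ turns these into $\lambda^\omega_h\otimes1$ and $\lambda^\omega_g\otimes\lambda^{\bar\omega}_g$ respectively; the algebra they generate is visibly $W^*(G;\omega)\bar\otimes W^*(G;\bar\omega)$. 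Hence $\N\rtimes G$ is a type~I factor, and by Proposition~\ref{prop:Galois-criterion} the pair $(W^*(G;\omega),\Ad\lambda^\omega)$ is a $G$-Galois object.

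It remains to dispatch integrability, which is needed before Proposition~\ref{prop:Galois-criterion} applies. Here I would argue that the operator-valued weight $P(a)=\int_G(\Ad\lambda^\omega_g)(a)\,dg$ is semifinite on $W^*(G;\omega)_+$: one checks this on a dense set of ``matrix coefficient'' type elements, using that for $f_1,f_2\in C_c(G)$ (or in $L^2(G)$ with suitable decay) the operator $\theta_{f_1,f_2}$ viewed through the twisted left regular representation has $\int_G(\Ad\lambda^\omega_g)(\cdot)\,dg$ finite — essentially the square-integrability of $\lambda^\omega$, which holds because $\lambda^\omega$ is (a twisted analogue of) the regular representation and the Duflo--Moore operator is the modular operator $\Delta^{-1}$ of $W^*(G;\omega)$, exactly as in the untwisted case. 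This is the step I expect to require the most care: making the square-integrability/integrability argument rigorous for an arbitrary second countable $G$ and a merely Borel cocycle $\omega$, so that the weight $\tilde\varphi$ with $P(a)=\tilde\varphi(a)1$ is genuinely normal, semifinite and faithful. Once that is in place, ergodicity plus the factoriality of $\N\rtimes G$ established above give the conclusion via Proposition~\ref{prop:Galois-criterion}.
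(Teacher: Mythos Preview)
Your approach is genuinely different from the paper's, and worth comparing. The paper does \emph{not} verify integrability or compute the $G$-crossed product $W^*(G;\omega)\rtimes_{\Ad\lambda^\omega}G$; instead it starts from the known $\hat G$-Galois object $(W^*(G;\bar\omega),\alpha)$, computes the $\hat G$-crossed product $W^*(G;\bar\omega)\rtimes_\alpha\hat G$, tracks the dual action, and identifies the resulting De Commer dual $G$-Galois object with $(W^*(G;\omega),\Ad\lambda^\omega)$. The Galois property---in particular integrability---then comes for free from De Commer's theory. Your route via Proposition~\ref{prop:Galois-criterion} is more elementary in spirit and avoids the duality machinery, at the cost of having to establish integrability by hand.

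Your crossed-product computation is correct (it is indeed the same calculation as in the preceding lemma, and that calculation nowhere uses the Galois property). Ergodicity is immediate: the fixed-point algebra is the center of the factor $W^*(G;\omega)$. Two small slips: first, your parenthetical about ``the coaction $\alpha$ of $G$ dual to $\Ad\lambda^\omega$ sending $\lambda^\omega_g\mapsto\lambda^\omega_g\otimes\lambda_g$'' conflates two different structures---that map is a coaction of $G$ (a $\hat G$-action), not a repackaging of the $G$-action $\Ad\lambda^\omega$. Second, $W^*(G;\omega)$ and $W^*(G;\bar\omega)=JW^*(G;\omega)J$ are \emph{not} each other's commutants in $B(L^2(G))$; the commutant is $\hat J W^*(G;\omega)\hat J=\rho^{\bar\omega}(G)''$. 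This does not hurt you, since $W^*(G;\omega)\bar\otimes W^*(G;\bar\omega)$ is a type~I factor regardless.

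The genuine gap is integrability, as you yourself flag. Your sketch invokes ``the operator $\theta_{f_1,f_2}$ viewed through the twisted left regular representation'', but rank-one operators on $L^2(G)$ are generally not in $W^*(G;\omega)$, so this does not produce elements of $\N_+$ on which $P$ is finite. What you actually need is square-integrability of the irreducible $\omega$-projective representation $\pi_\omega$ obtained from the identification $W^*(G;\omega)\cong B(H)$ (so that rank-one operators in $B(H)$ lie in the domain of $P$). Matrix coefficients of $\lambda^\omega$ with $C_c(G)$ vectors are compactly supported, but since $\lambda^\omega\cong\pi_\omega\otimes 1_K$, passing from square-integrable coefficients of the multiple to those of the irreducible summand requires an extra argument. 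This can be done, but it is more work than your sketch suggests; the paper's duality route sidesteps it entirely.
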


\bp Let us start with the I-factorial $\hat G$-Galois object $(W^*(G;\bar \omega),\alpha)$, with the coaction $\alpha$ given by~\eqref{eq:coaction}. By definition, the crossed product $W^*(G,\bar\omega)\rtimes_\alpha\hat G$ is the von Neumann subalgebra of $B(L^2(G)\otimes L^2(G))$ generated by $\alpha(W^*(G;\bar\omega))$ and $1\otimes L^\infty(G)$. The unitary $\omega_{21}\hat W$ commutes with $1\otimes L^\infty(G)$ and satisfies
$$
\omega_{21}\hat W(\lambda^{\bar\omega}_g\otimes\lambda_g)=(1\otimes\lambda^{\bar\omega}_g)\omega_{21}\hat W.
$$
Therefore the conjugation by this unitary defines an isomorphism of $W^*(G,\omega)\rtimes_\alpha\hat G$ onto the algebra $1\otimes B(L^2(G))$. This isomorphism maps $\alpha(W^*(G;\bar\omega))$ onto $1\otimes W^*(G;\bar\omega)$.

In order to understand what happens with the dual action, initially defined by the automorphisms $\Ad(1\otimes\rho_g)$, it is convenient to assume that the cocycle $\omega$ satisfies
\begin{equation*}\label{eq:cocycle-normalization}
\omega(g,e)=\omega(e,g)=\omega(g,g^{-1})=1\ \ \text{for all}\ \ g\in G,
\end{equation*}
which is always possible to achieve by replacing $\omega$ by a cohomologous cocycle. Then $\hat J$ is the modular conjugation of $W^*(\hat G;\bar\omega)$, so that $W^*(\hat G;\bar\omega)'=\hat JW^*(\hat G;\bar\omega)\hat J$. Put $\rho^{\omega}_g=\hat J\lambda^{\bar\omega}_g\hat J$. Then
$$
(\rho^{\bar\omega}_gf)(h)=\Delta(g)^{1/2}\omega(g,g^{-1}h^{-1})f(hg)=\Delta(g)^{1/2}\omega(h,g)f(hg).
$$
It is then not difficult to check that
$$
\omega_{21}\hat W(1\otimes\rho_g)\hat W^*\bar\omega_{21}=(\lambda^{\bar\omega}_g\otimes\rho^\omega_g).
$$

We thus see that the von Neumann algebra $\alpha(W^*(G;\bar\omega))'\cap (W^*(G;\bar\omega)\rtimes_\alpha \hat G)$, together with the restriction of the dual action, is isomorphic
to $\rho^{\omega}(G)''=\hat J W^*(G;\bar\omega)\hat J$ with the action given by the automorphisms $\Ad\rho^{\omega}_g$. As
$$
\rho^\omega_g=\hat J\lambda^{\bar\omega}_g\hat J=\hat JJ\lambda^{\omega}_gJ\hat J,
$$
we conclude that the $G$-Galois object associated with the $\hat G$-Galois object  $(W^*(G;\bar \omega),\alpha)$ is isomorphic to $(W^*(G;\omega),\Ad\lambda^\omega)$.
\ep

\subsection{Dual cocycles}\label{ss:dual-cocycles}

An important class of $G$-Galois objects arises from dual $2$-cocycles. By a \emph{dual unitary $2$-cocycle} on $G$ we mean a unitary element $\Omega\in W^*(G)\bar\otimes W^*(G)$ such that
\begin{equation}\label{eq:dual-cocycle}
(\Omega\otimes1)(\Dhat\otimes\iota)(\Omega)=(1\otimes\Omega)(\iota\otimes\Dhat)(\Omega).
\end{equation}
Similarly to the $\hat G$-Galois objects $W^*(G;\omega)$ considered above, such cocycles lead to $G$-Galois objects $W^*(\hat G;\Omega)$ (which for notational consistency with $W^*(G;\omega)$ we should have rather denoted by $W^*(\hat G;\Omega^*)$). Following~\cite[Section~4]{NT}, they can be described as follows.

Identify as usual the Fourier algebra $A(G)$ with the predual of $W^*(G)$. Given a dual unitary $2$-cocycle $\Omega\in W^*(G)\bar\otimes W^*(G)$, the von Neumann algebra $\mathcal N:=W^*(\hat G;\Omega)\subset B(L^2(G))$ is generated by the operators
$$
\pi_\Omega(f):=(f\otimes\iota)(\hat W\Omega^*),\quad f\in A(G).
$$
Define an action $\beta$ of $G$ on $\N$ by
$$
\beta_g(x):=(\Ad \rho_g)(x),\quad x\in\mathcal N,
$$
where we remind that $\rho_g=\hat J\lambda_g\hat J$ is the right regular representation.
The map $\pi_\Omega$ is the representation of the algebra $A(G)$ equipped with the new product
\begin{equation}\label{eq:star-Omega}
(f_1\star_\Omega  f_2)(g):=(f_1\otimes f_2)\big(\hat\Delta(\lambda_g)\Omega^*\big).
\end{equation}
The representation $\pi_\Omega$ has the equivariance property
\begin{equation}\label{eq:equivariance}
\beta_g\big(\pi_\Omega(f)\big)=\pi_\Omega(\lambda_gf).
\end{equation}

By \cite[Section~1.3]{VV}, the canonical weight $\tilde\varphi$ on $\N$ has the following description. Its GNS-space can be identified with $L^2(G)$, with the GNS-map $\tilde\Lambda\colon\mathfrak N_{\tilde\varphi}\to L^2(G)$ uniquely determined~by
\begin{equation}\label{eq:GNS0}
\tilde\Lambda(\pi_\Omega(f))=\check f\ \ \text{for} \ \ f\in A(G)\ \ \text{such that}\ \ \check f\in L^2(G),
\end{equation}
where we remind that $\check f(g)=f(g^{-1})$.
In particular, for $f$ as above we have
\begin{equation}\label{eq:tildephi}
\tilde\varphi(\pi_\Omega(f)^*\pi_\Omega(f))=\|\check f\|^2_2.
\end{equation}

\smallskip

Two dual unitary $2$-cocycles $\Omega$, $\Omega'$ are called cohomologous if there exists a unitary $u\in W^*(G)$ such that
$$
\Omega'=(u\otimes u)\Omega\Dhat(u)^*.
$$
The cohomology classes form a set $H^2(\hat G;\T)$. In general this set does not have any extra structure.

\begin{proposition}\label{prop:Galois-iso}
Two dual unitary $2$-cocycles $\Omega$, $\Omega'$ on a locally compact group  $G$ are cohomologous if and only if they define isomorphic $G$-Galois objects.
\end{proposition}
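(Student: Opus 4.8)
I would work spatially, using that every $\N_\Omega:=W^*(\hat G;\Omega)$ sits inside $B(L^2(G))$, carries the \emph{same} action $\beta_g=\Ad\rho_g$, and has a canonical weight $\tilde\vf$ with GNS space $L^2(G)$ via~\eqref{eq:GNS0}. I will use repeatedly that any equivariant $*$-isomorphism of such Galois objects intertwines the canonical weights, because $\tilde\vf(a)1=\int_G\beta_g(a)\,dg$ recovers $\tilde\vf$ from~$\beta$.

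\smallskip
\emph{Cohomologous $\Rightarrow$ isomorphic.} Suppose $\Omega'=(u\otimes u)\Omega\Dhat(u)^*$ with $u\in W^*(G)$ unitary, viewed on $L^2(G)$. From $\Dhat(u)=\hat W^*(1\otimes u)\hat W$ one gets $(1\otimes u)\hat W=\hat W\Dhat(u)$, hence
$$
u\,\pi_\Omega(f)\,u^*=(f\otimes\iota)\big(\hat W\,\Dhat(u)\,\Omega^*(1\otimes u^*)\big)=(f\otimes\iota)\big(\hat W\,(\Omega')^*(u\otimes1)\big)=\pi_{\Omega'}(f\cdot u),
$$
where $(f\cdot u)(x):=f(xu)$ and $f\mapsto f\cdot u$ is a bijection of $A(G)$. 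So $u\N_\Omega u^*\subseteq\N_{\Omega'}$, and the same computation for $u^*$ (using $\Omega=(u^*\otimes u^*)\Omega'\Dhat(u^*)^*$) gives equality. Since $u\in W^*(G)=\rho(G)'$, conjugation by $u$ commutes with every $\Ad\rho_g$, so $\Ad u\colon\N_\Omega\to\N_{\Omega'}$ is an isomorphism of $G$-Galois objects.

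\smallskip
\emph{Isomorphic $\Rightarrow$ cohomologous.} Let $\theta\colon\N_\Omega\to\N_{\Omega'}$ be an isomorphism of $G$-Galois objects. It preserves the canonical weights, hence is implemented by the unitary $U$ on $L^2(G)$ determined by $U\tilde\Lambda(a)=\tilde\Lambda'(\theta(a))$, that is, $\theta=\Ad U$. Combining \eqref{eq:GNS0}, \eqref{eq:equivariance} and the scaling~\eqref{eq:scaling}, one checks that on the GNS level $\beta_g$ is implemented by $\rho_g$ on both sides, so equivariance of $\theta$ forces $U\in\rho(G)'=W^*(G)$; put $u:=U$. Set $\Omega'':=(u\otimes u)\Omega\Dhat(u)^*$ (again a dual unitary $2$-cocycle). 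By the previous paragraph $\Ad u$ is an isomorphism of $G$-Galois objects $\N_\Omega\to\N_{\Omega''}$; since $\theta=\Ad u$ has image $\N_{\Omega'}$, we get $\N_{\Omega'}=\N_{\Omega''}$ as von Neumann subalgebras of $B(L^2(G))$ carrying the same action $\Ad\rho$. It then suffices to show that two dual unitary $2$-cocycles giving the same $G$-Galois object can differ only by a scalar in $\T$: for then $\Omega'=c\,\Omega''$ with $c\in\T$, and replacing $u$ by $cu$ writes $\Omega'$ as $(cu\otimes cu)\Omega\Dhat(cu)^*$, so $\Omega$ and $\Omega'$ are cohomologous.

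\smallskip
\emph{Injectivity up to a scalar, and the main obstacle.} If $\N_{\Omega_1}=\N_{\Omega_2}=:\N$ with the common action $\Ad\rho$, then the canonical weight $\tilde\vf$ is the same, and the two identifications $\tilde\Lambda_i\colon\mathfrak N_{\tilde\vf}\to L^2(G)$ of~\eqref{eq:GNS0} differ by a unitary $u_0\in\N'$; since the GNS-level implementation of $\beta_g$ is $\rho_g$ under both identifications, also $u_0\in\rho(G)'$, so $u_0\in\N'\cap W^*(G)$. But $(\N,\Ad\rho)$ being a Galois object forces $\N\vee\rho(G)''=B(L^2(G))$ (the Galois property, cf.\ Proposition~\ref{prop:Galois-criterion} and its proof), hence $\N'\cap W^*(G)=\N'\cap\rho(G)'=\C1$ and $u_0$ is a scalar of modulus one; then $\pi_{\Omega_1}(f)=u_0\,\pi_{\Omega_2}(f)$ for all relevant $f$, and slicing the identity $\hat W\Omega_1^*=u_0\hat W\Omega_2^*$ gives $\Omega_1=\overline{u_0}\,\Omega_2$. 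I expect the real work to lie in this last part together with the spatial implementation step: one must make sure the GNS realization of $\tilde\vf$ on $L^2(G)$ is canonical enough for the implementer of $\theta$ to be recognized as an element of $W^*(G)$ and for a $G$-Galois object to determine its cocycle up to a scalar. The remaining manipulations are routine.
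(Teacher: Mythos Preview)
Your proof is correct. The forward direction is essentially the paper's argument made explicit. In the reverse direction, however, the paper takes a shorter route: having produced the implementer $u\in W^*(G)$ exactly as you do, it invokes De Commer's identification $\G=\hat W\Omega^*$ of the Galois map. From the definition of $u$ and of $\G$ one gets immediately $(1\otimes u)\G=\G'(u\otimes u)$, i.e.\ $(1\otimes u)\hat W\Omega^*=\hat W\Omega'^*(u\otimes u)$; combined with $\Dhat(u)=\hat W^*(1\otimes u)\hat W$ this yields $\Omega'=(u\otimes u)\Omega\Dhat(u)^*$ in one line. Your approach instead passes through the intermediate cocycle $\Omega''$ and then proves the separate lemma that a concrete $G$-Galois object inside $B(L^2(G))$ determines its defining cocycle up to a scalar, using that $\N'\cap\rho(G)'=\C1$ (which is exactly the ergodicity statement in the proof of Proposition~\ref{prop:Galois-criterion}). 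This is a legitimate and self-contained argument, and it isolates a fact of independent interest; the paper's version trades this for a one-line appeal to the explicit form of the Galois map.
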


\bp
As $\Dhat(u)=\hat W^*(1\otimes u)\hat W$, it is easy to see that if $\Omega'=(u\otimes u)\Omega\Dhat(u)^*$ then $\Ad u$ defines a $G$-equivariant isomorphism of $W^*(\hat G;\Omega)$ onto $W^*(\hat G;\Omega')$.

Conversely, assume we have a $G$-equivariant isomorphism $\theta\colon W^*(\hat G;\Omega)\to W^*(\hat G;\Omega')$. Denote by $\tilde\Lambda$ and $\tilde\Lambda'$ the GNS-maps for these objects as described above. Then the isomorphism $\theta$ is implemented by the unitary $u$ defined by $u\tilde\Lambda(x)=\tilde\Lambda'(\theta(x))$. Since by~\eqref{eq:equivariance} and~\eqref{eq:GNS0} the actions of $G$ are implemented in a similar way by the unitaries $\rho_g$:
$$
\rho_g\tilde\Lambda(x)=\Delta(g)^{1/2}\tilde\Lambda(\beta_g(x))\ \ \text{and}\ \ \rho_g\tilde\Lambda'(x')=\Delta(g)^{1/2}\tilde\Lambda'(\beta'_g(x')),
$$
we conclude that $u\rho_g=\rho_g u$, hence $u\in W^*(G)$.

Denote by $\G$ and $\G'$ the corresponding Galois maps. Then by definition we have $(1\otimes u)\G=\G'(u\otimes u)$. On the other hand, by~\cite[Proposition~5.1]{DC} we have $\G=\hat W\Omega^*$ and $\G'=\hat W{\Omega'}^*$. Hence
$$
(1\otimes u)\hat W\Omega^*=\hat W{\Omega'}^*(u\otimes u).
$$
Using again that $\Dhat(u)=\hat W^*(1\otimes u)\hat W$, we conclude that $\Omega'=(u\otimes u)\Omega\Dhat(u)^*$.
\ep

Combined with Theorem~\ref{thm:I-Galois} this proposition allows one to describe a part of $H^2(\hat G;\T)$ in terms of cohomology of $G$. Namely, denote by $H^2_{\mathrm{I}}(\hat G;\T)$ the subset of $H^2(\hat G;\T)$ formed by the classes $[\Omega]$ such that $W^*(\hat G;\Omega)$ is a type I factor. Given such an $\Omega$, we can identify $W^*(\hat G;\Omega)$ with $B(H)$ for a Hilbert space $H$. Then the action $\beta$ of $G$ on $W^*(\hat G;\Omega)$ is given by a projective representation $\pi$ of $G$ on $H$, and we denote by $c_\Omega$ the corresponding $2$-cocycle $\omega_\pi$ on $G$. Similarly, denote by $H^2_{\mathrm{I}}(G;\T)$ the subset of $H^2(G;\T)$ formed by the classes $[\omega]$ such that $W^*(G;\omega)$ is a type I factor.

\begin{corollary}\label{cor:I-cocycles}
For any second countable locally compact group $G$, the map $\Omega\mapsto[c_\Omega]$ defines an embedding of $H^2_{\mathrm{I}}(\hat G;\T)$ into $H^2_{\mathrm{I}}(G;\T)$.
\end{corollary}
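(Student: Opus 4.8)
The plan is to recognize the asserted map as the composition of two maps already established in the excerpt. Proposition~\ref{prop:Galois-iso} shows that sending $[\Omega]$ to the isomorphism class of the $G$-Galois object $W^*(\hat G;\Omega)$ gives an injection from $H^2(\hat G;\T)$ into the set of isomorphism classes of $G$-Galois objects, while Theorem~\ref{thm:I-Galois}, together with the explicit description of its bijection given right after it, identifies the isomorphism classes of I-factorial $G$-Galois objects with the classes in $H^2_{\mathrm I}(G;\T)$ via $(B(H),\Ad\pi)\mapsto[\omega_\pi]$. First I would check that these two constructions are compatible on the I-factorial parts; once that is done, injectivity of the composite is immediate.

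In detail, let $[\Omega]\in H^2_{\mathrm I}(\hat G;\T)$, so that $W^*(\hat G;\Omega)$ is a type I factor. As recalled in Subsection~\ref{ss:dual-cocycles}, $\mathcal N:=W^*(\hat G;\Omega)$ with the action $\beta$ is a $G$-Galois object; being a type I factor, it is I-factorial, and identifying $\mathcal N$ with $B(H)$ we have $\beta=\Ad\pi$ for a projective representation $\pi$ and, by definition, $c_\Omega=[\omega_\pi]$. By the explicit form of the bijection in Theorem~\ref{thm:I-Galois}, the isomorphism class of $(B(H),\Ad\pi)$ corresponds to $[\omega_\pi]\in H^2_{\mathrm I}(G;\T)$; in particular $W^*(G;\omega_\pi)$ is a type I factor, so $[c_\Omega]$ does lie in $H^2_{\mathrm I}(G;\T)$. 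If $\Omega'$ is cohomologous to $\Omega$, then by Proposition~\ref{prop:Galois-iso} the Galois objects $W^*(\hat G;\Omega)$ and $W^*(\hat G;\Omega')$ are isomorphic, hence correspond to the same class under Theorem~\ref{thm:I-Galois}, so that $[c_{\Omega'}]=[c_\Omega]$. Thus $\Omega\mapsto[c_\Omega]$ descends to a well-defined map $H^2_{\mathrm I}(\hat G;\T)\to H^2_{\mathrm I}(G;\T)$, which by construction is the composition of the injection of Proposition~\ref{prop:Galois-iso} with the bijection of Theorem~\ref{thm:I-Galois}.

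For injectivity, suppose $[c_\Omega]=[c_{\Omega'}]$ with $[\Omega],[\Omega']\in H^2_{\mathrm I}(\hat G;\T)$. Since the bijection of Theorem~\ref{thm:I-Galois} is injective, the I-factorial $G$-Galois objects $W^*(\hat G;\Omega)$ and $W^*(\hat G;\Omega')$ are isomorphic, and then Proposition~\ref{prop:Galois-iso} forces $\Omega$ and $\Omega'$ to be cohomologous, that is, $[\Omega]=[\Omega']$.

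The argument is thus entirely formal once Proposition~\ref{prop:Galois-iso} and Theorem~\ref{thm:I-Galois} are available; the one point that requires care is the bookkeeping matching the two I-factoriality conditions --- namely that ``$W^*(\hat G;\Omega)$ is a type I factor'' is precisely the condition for the associated $G$-Galois object to be I-factorial, and that the cocycle $\omega_\pi$ read off from $(B(H),\Ad\pi)$ is the same object on both sides of the correspondence --- so that there is no analytic obstacle to speak of here.
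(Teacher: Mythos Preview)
Your proof is correct and follows exactly the approach the paper intends: the corollary is stated without a separate proof precisely because it is an immediate consequence of combining Proposition~\ref{prop:Galois-iso} with Theorem~\ref{thm:I-Galois}, and you have simply spelled out that combination in detail. The only difference is that the paper leaves the verification implicit, while you make the well-definedness and injectivity arguments explicit.
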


A natural question is whether this embedding is onto. We do not know the answer, but as a step towards a solution of this problem let us explain how dual cocycles arise from Galois maps under extra assumptions.

It will be useful to go beyond Galois objects. Assume we are given a square-integrable irreducible projective representation~$\pi$ of~$G$ on~$H$. Assume also that we are given a unitary map
\begin{equation}\label{eq:op}
\Op\colon L^2(G)\to \HS(H)\ \ \text{such that}\ \ \Op(\lambda_gf)=(\Ad\pi(g))(\Op(f)),
\end{equation}
which we will call a \emph{quantization map}. Here $\HS(H)$ denotes the Hilbert space of Hilbert--Schmidt operators on $H$.
As in Section~\ref{ss:HG}, consider the Duflo--Moore operator $K$ on $H$ and the weight $\tilde\varphi=\Tr(K^{1/2}\cdot K^{1/2})$.
As the GNS-space for $\tilde\varphi$ we could take $\HS(H)$, with the GNS-map $\tilde\Lambda$ uniquely determined by $\tilde\Lambda(TK^{-1/2}):=T$ for $T\in\HS(H)$ such that $TK^{-1/2}$ is a bounded operator. But using the unitary $\Op$ we can transport everything to $L^2(G)$. Thus we take $L^2(G)$ as the GNS-space, with the GNS-map uniquely determined by
\begin{equation}\label{eq:GNS}
\tilde\Lambda(\Op(f)K^{-1/2}):=f\ \ \text{for}\ \ f\in L^2(G)\ \ \text{such that}\ \  \Op(f)K^{-1/2}\in B(H).
\end{equation}
Consider the corresponding Galois map $\G$, so $\G\colon L^2(G)\otimes L^2(G)\to L^2(G)\otimes L^2(G)$,
\begin{equation}\label{eq:Galois}
\G(f_1\otimes f_2)(g,h)=\tilde\Lambda\Big(\big(\Ad\pi(g)\big)(\Op(f_1)K^{-1/2})\Op(f_2)K^{-1/2}\Big)(h).
\end{equation}
Finally, define
\begin{equation}\label{eq:Omega}
\Omega:=(\CJ\otimes\CJ)\G^*(1\otimes\CJ)\hat W,
\end{equation}
where we remind that $\CJ= J\hat J$.

\begin{proposition}\label{prop:dual-cocycle}
With the above setup and notation, the operator $\Omega$ is coisometric. It lies in the algebra $W^*(G)\bar\otimes W^*(G)$ and satisfies the cocycle identity~\eqref{eq:dual-cocycle}.

In particular, $\Omega$ is a dual unitary $2$-cocycle on $G$ if and only if $(B(H),\Ad\pi)$ is a $G$-Galois object. Moreover, if $\Omega$ is indeed unitary, then $(B(H),\Ad\pi)$ is isomorphic to the $G$-Galois object $(W^*(\hat G;\Omega),\beta)$ defined by $\Omega$.
\end{proposition}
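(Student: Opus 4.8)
The plan is to identify the map $\G$ of~\eqref{eq:Galois} with the abstract Galois map of the ergodic integrable action $\beta=\Ad\pi$ on $B(H)$ from Section~\ref{ss:HG}, transported to $L^2(G)$ through the unitary $\Op$ and the prescription~\eqref{eq:GNS}. First I would check that~\eqref{eq:GNS} indeed defines the GNS-map of $\tilde\varphi=\Tr(K^{1/2}\cdot K^{1/2})$ with GNS-space $L^2(G)$: for $a=\Op(f)K^{-1/2}\in B(H)$ one has $K^{1/2}a^*aK^{1/2}=\Op(f)^*\Op(f)$, so $\tilde\varphi(a^*a)=\|\Op(f)\|_{\HS}^2=\|f\|_2^2$ by unitarity of $\Op$, and the elements $a$ of this form span a core (by square-integrability of $\pi$; cf.\ the remark following Proposition~\ref{prop:Galois-criterion}). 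Then~\eqref{eq:Galois} is literally the relation $\G(\tilde\Lambda(a)\otimes\tilde\Lambda(b))(g)=\tilde\Lambda(\beta_g(a)b)$ from Section~\ref{ss:HG}, so $\G$ is an isometry, and it is unitary precisely when $(B(H),\Ad\pi)$ is a $G$-Galois object (Proposition~\ref{prop:Galois-criterion} together with the definition of a Galois object). Since $\CJ$ is a self-adjoint unitary and $\hat W$ is unitary, $\Omega=(\CJ\otimes\CJ)\G^*(1\otimes\CJ)\hat W$ satisfies $\Omega\Omega^*=(\CJ\otimes\CJ)\G^*\G(\CJ\otimes\CJ)=1$, so $\Omega$ is always coisometric, while $\Omega^*\Omega=\hat W^*(1\otimes\CJ)\G\G^*(1\otimes\CJ)\hat W$ equals $1$ exactly when $\G$ is coisometric too, i.e.\ exactly when $(B(H),\Ad\pi)$ is a $G$-Galois object. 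This already yields the coisometry statement and the displayed ``in particular''.

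For $\Omega\in W^*(G)\bar\otimes W^*(G)$ and the cocycle identity~\eqref{eq:dual-cocycle}, I would compute the intertwining relations of $\G$ directly from~\eqref{eq:Galois}, writing $\tilde\Lambda(x)=\Op^{-1}(xK^{1/2})$ and using $\Op(\lambda_g f)=\Ad\pi(g)(\Op(f))$ together with~\eqref{eq:scaling2} in the form $\pi(g)K^{1/2}\pi(g)^*=\Delta(g)^{1/2}K^{1/2}$. For instance one obtains $(\rho_g\otimes1)\G=\G(\lambda_g\otimes1)$ in the first leg; the analysis of the second leg is more involved and uses in addition the modular data of $\tilde\varphi$ (whose modular automorphism group is $\Ad K^{it}$), but after conjugating the relevant leg by $\CJ$ (recall $\CJ\lambda_g\CJ=\rho_g$) one recovers exactly the relations characterizing the Galois map $\hat W\Omega^*$ of a $G$-Galois object defined by a dual cocycle, as in De Commer~\cite[Proposition~5.1]{DC}. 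The conjugations in~\eqref{eq:Omega} are arranged precisely so that $\Omega$ then commutes with $\rho_g\otimes1$ and $1\otimes\rho_g$, hence lies in $W^*(G)\bar\otimes W^*(G)$, and so that~\eqref{eq:dual-cocycle} for $\Omega$ translates into the coassociativity (``pentagon'') relation that $\G$ inherits from associativity of the multiplication on $B(H)$ used in~\eqref{eq:Galois}. The key point is that both of these are one-sided, linear properties of $\G$: they require only that $\G$ be a well-defined bounded map intertwining the relevant data, never that it be unitary — which is exactly why $\Omega$ still satisfies~\eqref{eq:dual-cocycle} when it is merely coisometric.

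For the last assertion, suppose $\Omega$ is unitary. By the first paragraph $(B(H),\Ad\pi)$ is then a $G$-Galois object whose Galois map, transported to $L^2(G)$ via~\eqref{eq:GNS}, is $\G$. On the other hand $\Omega$, as a dual unitary $2$-cocycle, defines the $G$-Galois object $(W^*(\hat G;\Omega),\beta)$ of Section~\ref{ss:dual-cocycles}, whose Galois map equals $\hat W\Omega^*$ by~\cite[Proposition~5.1]{DC}; but $\hat W\Omega^*=(1\otimes\CJ)\G(\CJ\otimes\CJ)$ by~\eqref{eq:Omega}, i.e.\ it is $\G$ up to the conjugations that account for the difference between the GNS-identifications~\eqref{eq:GNS} and~\eqref{eq:GNS0}. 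Likewise the action $\beta_g=\Ad\rho_g$ corresponds to $\Ad\pi(g)$ transported through $\Op$ and~\eqref{eq:GNS} (compare $\rho_g\tilde\Lambda(x)=\Delta(g)^{1/2}\tilde\Lambda(\beta_g(x))$ in the proof of Proposition~\ref{prop:Galois-iso}). Since a $G$-Galois object is recovered from its GNS data together with its Galois map, the two objects are isomorphic; alternatively one writes the intertwining unitary on $L^2(G)$ explicitly out of $\Op$ and $\CJ$ and checks $G$-equivariance by hand.

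The step I expect to be the main obstacle is the careful bookkeeping of the modular conjugations: placing the $\CJ$'s in~\eqref{eq:Omega} so that $\Omega$ genuinely has both legs in $W^*(G)$, and reconciling the $G$- versus $\hat G$-conventions in De Commer's description of Galois maps — in particular the second-leg intertwining relation of $\G$, where the modular data of $\tilde\varphi$ enter. A secondary technical point is domain control: $K$ is in general unbounded, so the identities involving $\Op(f)K^{-1/2}$, $\G$ and $\hat W$ must first be verified on those $f$ for which $\Op(f)K^{-1/2}$ is bounded and then extended by continuity, which is legitimate since $\G$ is bounded.
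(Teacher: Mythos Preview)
Your proposal is essentially correct and follows the same route as the paper: identify $\G$ with the abstract Galois isometry, read off coisometry of $\Omega$, establish the intertwining relations of $\G$ to get $\Omega\in W^*(G)\bar\otimes W^*(G)$, derive the cocycle identity from a hybrid pentagon relation for $\G$, and recover the isomorphism from the fact that the von Neumann algebra of a Galois object is generated by slices of its Galois map.

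Two small corrections of emphasis. First, the second-leg relation is no harder than the first: the paper obtains $\G(1\otimes\lambda_g)=(\lambda_g\otimes\lambda_g)\G$ directly from~\eqref{eq:Galois} and the scaling property~\eqref{eq:scaling2}, symmetrically with $\G(\lambda_g\otimes1)=(\rho_g\otimes1)\G$; no further modular input (in particular not the modular automorphism group of $\tilde\varphi$) is needed. Second, for the cocycle identity the paper invokes De Commer's hybrid pentagon relation $\hat W_{12}\G_{13}\G_{23}=\G_{23}\G_{12}$ (\cite[Proposition~3.5]{DC}), noting explicitly that its proof does not require $\G$ to be unitary; substituting $\G=(1\otimes\CJ)\hat W\Omega^*(\CJ\otimes\CJ)$ and using the ordinary pentagon for $\hat W$ then gives~\eqref{eq:dual-cocycle}. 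Your ``associativity of multiplication on $B(H)$'' is exactly the content of that relation, so you are pointing at the right thing, but citing it makes the argument cleaner. For the final isomorphism the paper uses \cite[Proposition~3.6(1)]{DC} to conclude $\pi_{\tilde\varphi}(B(H))=\CJ W^*(\hat G;\Omega)\CJ$ and then matches the actions via $\CJ\rho_g\CJ=\lambda_g$, which is precisely the ``recovered from its Galois map'' principle you appeal to.
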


\bp
Since the Galois maps are always isometric, it is clear that $\Omega$ is coisometric.

Next, a straightforward application of definition~\eqref{eq:Galois} together with scaling property~\eqref{eq:scaling2} yield the following identities for $\G$, cf.~\cite[Lemma~3.2]{DC}:
$$
\G(\lambda_g\otimes1)=(\rho_g\otimes1)\G,\ \ \G(1\otimes\lambda_g)=(\lambda_g\otimes\lambda_g)\G.
%\G(1\otimes\rho_g)&=(1\otimes\rho_g)\G
$$
Together with the identities
$$
\hat W(\rho_g\otimes1)=(\rho_g\otimes1)\hat W,\ \ \hat W(1\otimes\rho_g)=(\lambda_g\otimes\rho_g)\hat W,\ \ \CJ\rho_g=\lambda_g\CJ
$$
this implies that $\Omega$ commutes with the operators $\rho_g\otimes1$ and $1\otimes\rho_g$. Hence $\Omega\in W^*(G)\bar\otimes W^*(G)$.

Turning to the cocycle identity, by~\cite[Proposition~3.5]{DC} the Galois map $\G$ (denoted by $\tilde G$ in op.~cit.) satisfies the following hybrid pentagon relation:
$$
\hat W_{12}\G_{13}\G_{23}=\G_{23}\G_{12}.
$$
(More precisely, the result in~\cite{DC} is formulated only for the Galois objects, but an inspection of the proof shows that it remains valid for arbitrary integrable ergodic actions.)
Plugging in $\G=(1\otimes\CJ)\hat W\Omega^*(\CJ\otimes\CJ)$ we get
$$
\hat W_{12}\hat W_{13}\Omega^*_{13}\hat W_{23}\Omega^*_{23}=\hat W_{23}\Omega^*_{23}\hat W_{12}\Omega^*_{12},
$$
and using $\Dhat(x)=\hat W^*(1\otimes x)\hat W$ and the pentagon relation $\hat W_{12}\hat W_{13}\hat W_{23}=\hat W_{23}\hat W_{12}$ we obtain the required cocycle identity.

Finally, by the definition of $\Omega$, the Galois map $\G$ is unitary if and only if $\Omega$ is unitary. Assuming that $\Omega$ and $\G$ are unitary, by~\cite[Proposition~3.6(1)]{DC} the elements $(\omega\otimes\iota)(\G)$ for $\omega\in B(L^2(G))_*$ span a $\sigma$-weakly dense subspace of $\pi_{\tilde\varphi}(B(H))$. Recalling the definition of $W^*(\hat G;\Omega)$ we conclude that
$$
\pi_{\tilde\varphi}(B(H))=\CJ W^*(\hat G;\Omega)\CJ.
$$
The action of $G$ on $B(H)$ is implemented on the GNS-space by the unitaries $\lambda_g$, while that on $W^*(\hat\G;\Omega)$ by the unitaries $\rho_g$. Since $\CJ\rho_g\CJ=\lambda_g$, we see that the Galois objects $(B(H),\Ad\pi)$ and $(W^*(G;\Omega),\beta)$ are indeed isomorphic.
\ep

\begin{remark}
Although \cite[Proposition~3.6(1)]{DC} is formulated only for the Galois objects, its proof remains valid for any integrable ergodic action. Therefore we see that starting from a square-integrable irreducible projective representation $\pi$ of $G$ on $H$ and a unitary quantization map $\Op\colon L^2(G)\to \HS(H)$, we can define a dual coisometric cocycle $\Omega$ by~\eqref{eq:Omega} and then the set of elements $(\omega\otimes\iota)(\hat W\Omega^*)$ for $\omega\in B(L^2(G))_*$ span a $\sigma$-weakly dense subspace of $\CJ\pi_{\tilde\varphi}(B(H))\CJ$. Therefore $\Omega$ contains a complete information about $(B(H),\Ad\pi)$ independently of whether we deal with a Galois object or not.
\end{remark}

\begin{remark}\label{rem:two-star}
The quantization map $\Op$ defines a product $\star$ on $L^2(G)$ by
$$
\Op(f_1\star f_2)=\Op(f_1)\Op(f_2).
$$
In general the product $\star_\Omega$ defined by $\Omega$ is apparently {\em not} the same as $\star$ on $A(G)\cap L^2(G)$. In order to see this, let us proceed a bit informally, without trying to fully justify every step. By definition we have
$$
\CJ\pi_\Omega(f)\CJ=\CJ(f\otimes\iota)(\hat W\Omega^*)\CJ=(f\otimes\iota)(\G(\CJ\otimes1)).
$$
Take functions $f_1,f_2\in L^2(G)$ and consider the function $f\in A(G)$ defined by $f(g)=(\lambda_gf_1,f_2)$. Applying $(\cdot f_1,f_2)$ to the first leg of $\G(\CJ\otimes1)$ we then get
$$
\CJ\pi_\Omega(f)\CJ=\int_G\big(\Ad \pi(g)\big)(\Op(\CJ f_1)K^{-1/2})\overline{f_2(g)}\,dg=\Op(\Delta^{-1/2}f)K^{-1/2}.
$$
We thus conclude that $\star_\Omega$ is related to the quantization map $\Op'$ defined by
$$
\Op'(f)=\Op(\Delta^{-1/2}f)K^{-1/2},
$$
so that $\Op'(f_1\star_\Omega  f_2)=\Op'(f_1)\Op'(f_2)$.

The products defined by the quantization maps $\Op$ and $\Op'$ coincide if and only if the map
$f\mapsto \Op^{-1}(\Op(\Delta^{-1/2}f)K^{-1/2})$ is an automorphism with respect to $\star$. In general we see no reason why this should be the case. But this can happen.
Observe that since $\Delta$ is the only positive measurable function $F$ on $G$ such that $\lambda_gF=\Delta(g)^{-1}F$, any reasonable extension of~$\Op$ to a class of functions including $\Delta^s$ should satisfy $\Op(\Delta^{1/2})=c K^{-1/2}$ for a constant $c>0$. Then $\Op^{-1}(\Op(\Delta^{-1/2}f)K^{-1/2})=c^{-1}(\Delta^{-1/2}f)\star\Delta^{1/2}$. From this we see that for the map $f\mapsto \Op^{-1}(\Op(\Delta^{-1/2}f)K^{-1/2})$ to be an automorphism it suffices to have the identities
\begin{equation}\label{eq:chi-star}
\Delta^s\star\Delta^t=\Delta^{s+t},\ \
\Delta^{-1/2}f=c\Delta^{-\alpha}\star f\star\Delta^{\alpha-1/2}
\end{equation}
for some $\alpha\in\R$. For the examples studied in this paper we will indeed have such identities, with $c=1$ and $\alpha=1/2$. On other hand, for the example studied in~\cite{BGNT1} (which is not a Galois object) we had $c=1$ and $\alpha=1/4$.
\end{remark}

We thus see that the problem of describing $H^2_{\mathrm I}(\hat G;\T)$ reduces to the following question: it is true that for any I-factorial Galois object $(B(H),\Ad\pi)$ there is a unitary quantization map~\eqref{eq:op}? This can be reformulated as a representation-theoretic problem as follows. Assume we are given a $2$-cocycle $\omega$ on $G$ such that $W^*(G;\omega)$ is a type I factor. We identify $W^*(G;\omega)$ with $B(H)$ and put $\pi_\omega(g):=\lambda^\omega_g\in B(H)$. Then $g\mapsto \pi_\omega(g)\otimes \pi^c_\omega(g)$ is a well-defined unitary representation of $G$ on $H\otimes\bar H$, where $\pi^c_\omega(g)\bar\xi:=\overline{\pi_\omega(g)\xi}$. Is this representation equivalent to the regular representation?

The answer is known to be ``yes'' for finite groups~\cite{Wa,Mov}. Indeed, the Galois map gives a unitary equivalence
$$
\pi_\omega\otimes\pi_\omega^c\otimes\eps_H\otimes\eps_{\bar H}\sim\rho\otimes\eps_H\otimes\eps_{\bar H},
$$
where $\eps_L$ denotes the trivial representation of $G$ on the Hilbert space $L$. This implies the required equivalence $\pi_\omega\otimes\pi_\omega^c\sim\lambda$ for finite groups $G$, but falls short of what we need for general~$G$.

\begin{remark}\label{rem:ax+b}
In order to stress that it can be dangerous to rely too much on analogies with the finite group case, note that for any square-integrable irreducible projective representation~$\pi$ of~$G$ on $H$, the Galois map always defines an embedding of the representation $\Ad\pi\otimes\eps_{\HS(H)}$ on $\HS(H)\otimes \HS(H)$ into $\rho\otimes\eps_{\HS(H)}$. It follows that for finite groups existence of a unitary quantization map~\eqref{eq:op} is equivalent to $(B(H),\Ad\pi)$ being a Galois object. This is certainly (but until recently unexpectedly!) not the case for general groups. For example, for the connected component $G$ of the $ax+b$ group over $\R$ there are two inequivalent infinite dimensional irreducible unitary representations. They are both square-integrable and both admit unitary quantization maps~\cite{BG}. But $G$ has no I-factorial Galois objects, since $H^2(G;\T)$ is trivial and $W^*(G)$ is the sum of two type I factors.
\end{remark}

\subsection{Dual cocycles defined by genuine representations}

We now turn to Galois objects $(B(H),\Ad\pi)$ defined by genuine representations.

\begin{theorem}\label{thm:genuine-rep}
For any nontrivial second countable locally compact group $G$, a (square-integrable, irreducible) unitary representation $\pi\colon G\to B(H)$ such that $(B(H),\Ad\pi)$ is a $G$-Galois object exists if and only if $W^*(G)$  is a type I factor. Moreover, if $\pi$ exists, then
\begin{enumerate}
\item[(i)] $\pi$ is unique up to unitary equivalence; explicitly, by identifying $W^*(G)$ with $B(H)$ we can take $\pi(g)=\lambda_g$;
\item[(ii)] the Galois object $(B(H),\Ad\pi)$ is defined by a dual unitary $2$-cocycle $\Omega$ on $G$.
\end{enumerate}
\end{theorem}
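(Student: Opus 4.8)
The key point is that a \emph{genuine} representation carries the trivial cocycle: taking the lift $\tilde\pi=\pi$ gives $\omega_\pi\equiv1$, so the class in $H^2(G;\T)$ attached to the Galois object $(B(H),\Ad\pi)$ by Theorem~\ref{thm:I-Galois} is the unit class. Hence, by that theorem, a $G$-Galois object of the form $(B(H),\Ad\pi)$ with $\pi$ genuine exists if and only if the unit class lies in the range of the bijection, i.e.\ if and only if $W^*(G;1)=W^*(G)$ is a type~I factor; and when it exists it is (up to isomorphism) the object $(W^*(G),\Ad\lambda)$, so after identifying $W^*(G)$ with $B(H)$ we may take $\pi(g)=\lambda_g$, which is automatically irreducible and square-integrable since the action $\Ad\pi$ is ergodic and integrable. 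Note this forces $G$ non-abelian, for abelian $G$ having $W^*(G)=L^\infty(\hat G)$ a factor only when $G$ is trivial. This proves the equivalence and the explicit part of~(i).

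For uniqueness in~(i): two such objects $(B(H_j),\Ad\pi_j)$, $j=1,2$, have the same (unit) cohomology class, hence by Theorem~\ref{thm:I-Galois} are isomorphic as $G$-Galois objects. An isomorphism $B(H_1)\to B(H_2)$ is spatial, say $\Ad u$ for a unitary $u\colon H_1\to H_2$, and its $G$-equivariance forces $u\pi_1(g)u^*=c(g)\pi_2(g)$ for a Borel (hence continuous) homomorphism $c\colon G\to\T$. Thus $\pi_1$ is unitarily equivalent to the character twist $c\cdot\pi_2$; since each $\pi_j$ is unitarily equivalent to $\lambda$ by the explicit part, and since $g\mapsto c(g)\lambda_g$ is unitarily equivalent to $\lambda$ (conjugate by the operator of multiplication by $c$ on $L^2(G)$, which intertwines them), we conclude $\pi_1\cong\lambda\cong\pi_2$.

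It remains to prove~(ii), so assume $W^*(G)\cong B(H)$ and $\pi(g)=\lambda_g$. As noted, $(B(H),\Ad\pi)$ is a $G$-Galois object; alternatively, directly: $\Ad\pi$ is ergodic because $\pi$ is irreducible, integrable because $\pi$ is a direct summand of $\lambda$ and hence square-integrable, and Proposition~\ref{prop:Galois-criterion} applies because the action is implemented by the \emph{genuine} unitary representation $\pi$, which gives $B(H)\rtimes_{\Ad\pi}G\cong B(H)\,\bar\otimes\,W^*(G)=B(H\otimes H)$, a type~I factor. By Proposition~\ref{prop:dual-cocycle} it now suffices to construct a \emph{unitary} quantization map $\Op\colon L^2(G)\to\HS(H)$ satisfying~\eqref{eq:op}; for then the operator $\Omega$ of~\eqref{eq:Omega} is a dual unitary $2$-cocycle on $G$ (unitary because $(B(H),\Ad\pi)$ is a Galois object), and its associated Galois object $(W^*(\hat G;\Omega),\beta)$ is isomorphic to $(B(H),\Ad\pi)$.

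Constructing $\Op$ is the heart of the argument and the step I expect to be the main obstacle. Condition~\eqref{eq:op} forces $\Op$ to intertwine $\lambda$ on $L^2(G)$ with $\Ad\pi$ on $\HS(H)$, so one must prove the unitary equivalence $\lambda\cong\Ad\pi$. I would proceed as follows. The Duflo--Moore orthogonality relations for the square-integrable irreducible $\pi$ yield a matrix-coefficient unitary $L^2(G)\cong\HS(H)\cong H\otimes\bar H$ under which $\lambda_g\leftrightarrow\pi(g)\otimes1$ and $\rho_g\leftrightarrow1\otimes\bar\pi(g)$; since on $\HS(H)$ one has $\Ad\pi(g)=\pi(g)\otimes\bar\pi(g)$, the representation $\Ad\pi$ is, via this identification, unitarily equivalent to the conjugation representation $g\mapsto\lambda_g\rho_g$ of $G$ on $L^2(G)$. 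Thus the task reduces to: \emph{when $W^*(G)=B(H)$ is a type~I factor, the conjugation representation of $G$ on $L^2(G)$ is unitarily equivalent to $\lambda$} --- exactly where the hypothesis is used, this being false for abelian $G$, whose conjugation representation is trivial. I would prove it by showing $\pi\otimes\bar\pi$ is isotypic of type $\pi$ with multiplicity $\dim H$, matching $\lambda\cong\pi\otimes1_{\bar H}$; concretely, that the commutant $\Ad\pi(G)'$ on $\HS(H)$ is a type~I factor, which I expect to follow from $(B(H),\Ad\pi)$ being a Galois object together with De Commer's analysis of the Galois map in~\cite[Proposition~3.6]{DC}. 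An alternative is to build $\Op$ explicitly by slicing the (unitary) Galois map $\G$ of $(B(H),\Ad\pi)$ against a suitable, possibly unbounded, covector proportional to $K^{-1/2}$. Once $\Op$ is available, Proposition~\ref{prop:dual-cocycle} gives~(ii).
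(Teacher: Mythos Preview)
Your treatment of the equivalence and of part~(i) is correct and essentially matches the paper: both reduce to Theorem~\ref{thm:I-Galois} with the trivial cocycle, and both dispose of the residual character ambiguity by showing $\eta\lambda\cong\lambda$ for any character~$\eta$ (you via the multiplication operator $M_\eta$, the paper via Fell's absorption). For part~(ii) you also correctly reduce to proving $\pi\otimes\bar\pi\cong\lambda$, which is precisely the paper's reduction.

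The gap is in how you propose to prove this last equivalence. You suggest it should follow from De Commer's analysis of the Galois map, but the paper explicitly addresses this route (in the discussion just before Remark~\ref{rem:ax+b}) and points out that the Galois map only yields the \emph{stabilized} equivalence $\pi\otimes\pi^c\otimes\eps_{H}\otimes\eps_{\bar H}\sim\rho\otimes\eps_{H}\otimes\eps_{\bar H}$, which ``falls short of what we need for general~$G$'' since infinite multiplicities cannot simply be cancelled; your alternative of slicing~$\G$ against $K^{-1/2}$ is the same idea in disguise. The paper's actual argument is different and more substantial. First $\pi^c\cong\pi$ (from $\lambda^c\cong\lambda$ and $\lambda\sim\pi\otimes\eps_L$), then Fell's absorption gives $\pi\otimes\pi\otimes\eps_L\sim\pi\otimes\lambda\sim\lambda\otimes\eps_H$, so $\pi\otimes\pi$ is a multiple of~$\pi$. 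The real work is to show this multiplicity is \emph{infinite}, i.e.\ that the commutant of $(\lambda\otimes\lambda)(G)$ in $W^*(G)\bar\otimes W^*(G)$ is infinite dimensional. This comes from a general lemma: if $G_1$ is a closed nonopen subgroup of a second countable~$G_2$ with $W^*(G_1)$ a type~I factor, then $W^*(G_1)'\cap W^*(G_2)$ is infinite dimensional. The proof is by contradiction using operator-valued weights: a finite-dimensional relative commutant would force the canonical operator-valued weight $W^*(G_2)\to W^*(G_1)$ to be a bounded conditional expectation, and an explicit computation on $C_c(G_2)$ shows no such bounded expectation exists when $G_1$ has measure zero in~$G_2$. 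Applied to the diagonal $G\subset G\times G$, this yields the missing infinite multiplicity. This operator-valued-weight step is the key idea absent from your proposal.
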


Recall that by Proposition~\ref{prop:Galois-iso} the cohomology class of $\Omega$ is determined by the isomorphism class of the corresponding Galois object. Therefore the above theorem shows that if $W^*(G)$ is a type I factor, then we get a canonical class $[\Omega]\in H^2(\hat G;\T)$. In terms of Corollary~\ref{cor:I-cocycles}, this class corresponds to the unit of $H^2(G;\T)$.

Note also that the condition that $W^*(G)$ is a type I factor implies that $G$ is neither compact nor discrete, so the situation described in the theorem is a purely analytical phenomenon.

\bp[Proof of Theorem~\ref{thm:genuine-rep}]
The first statement is an immediate consequence of Theorem~\ref{thm:I-Galois} applied to genuine representations and, correspondingly, to the trivial $2$-cocycle $\omega=1$ on $G$.
Furthermore, that theorem implies that $\pi$ is unique up to equivalence as a projective representation. Therefore to prove part (i) we only have to show the slightly stronger statement that $\pi$ is also unique up to equivalence as a genuine representation. Thus, we identify $W^*(G)$ with $B(H)$, take $\pi(g)=\lambda_g$, and we have to show that for any character $\eta\colon G\to\T$ the representations $\eta\pi$ and $\pi$ are equivalent. The representations $\eta\lambda$ and $\lambda$ are unitarily equivalent, e.g., by Fell's absorption principle. It follows that there exists an automorphism $\theta$ of $W^*(G)$ such that $\theta(\lambda_g)=\eta(g)\lambda_g$ for all $g$. As $\theta$ is an automorphism of $B(H)=W^*(G)$, it is unitarily implemented, which means exactly that $\eta\pi$ and $\pi$ are unitarily equivalent.

\smallskip

In order to prove part (ii), by our results in Section~\ref{ss:dual-cocycles} it suffices to show that the representation $\pi\otimes\pi^c$ is equivalent to the regular representation.

Since we can identify $W^*(G)$ with $B(H)$ in such a way that $\pi(g)=\lambda_g$, the representation~$\lambda$ is a multiple of $\pi$, so we can write $\lambda\sim\pi\otimes\eps_L$, where $\eps_L$ is the trivial representation on a separable Hilbert space $L$. Since $G$ is nontrivial, the Hilbert space $H$ must be infinite dimensional. But then the multiplicity of the square-integrable representation $\pi$ in $\lambda$ must be infinite as well~\cite{DM}, so the Hilbert space $L$ is infinite dimensional.

By passing to the conjugate representations we get
$$
\pi^c\otimes\eps_L\sim\lambda^c\sim\lambda\sim\pi\otimes\eps_L.
$$
This implies that the irreducible representations $\pi^c$ and $\pi$ are equivalent.

Next, using Fell's absorption principle we get
$$
\pi\otimes\pi\otimes\eps_L\sim\pi\otimes\lambda\sim\lambda\otimes\eps_H\sim\pi\otimes\eps_{L}\otimes\eps_H.
$$
From this we see that the representation $\pi\otimes\pi$ is a multiple of $\pi$, and in order to conclude that $\pi\otimes\pi\sim\lambda$ it suffices to show that the multiplicity of $\pi$ in $\pi\otimes\pi$ is infinite. In other words, we have to check that the commutant of $(\pi\otimes\pi)(G)$ in $B(H\otimes H)$ is infinite dimensional. Equivalently, that the commutant of $(\lambda\otimes\lambda)(G)$ in $W^*(G)\bar\otimes W^*(G)$ is infinite dimensional.

More generally, let us show that if $G_1$ is a closed nonopen subgroup of a second countable locally compact group $G_2$ such that $W^*(G_1)$ is a type I factor, then the relative commutant $W^*(G_1)'\cap W^*(G_2)$ is infinite dimensional.

Assume $W^*(G_1)'\cap W^*(G_2)$ is finite dimensional. Denote by $\Delta_i$ the modular function of~$G_i$, by $\mu_i$ the Haar measure on $G_i$ and by $\hat\varphi_i$ the standard Haar weight on $W^*(G_i)$. The modular group of $\hat\varphi_2$ is given by $\sigma_t(\lambda_g)=\Delta_2(g)^{it}\lambda_g$. It preserves $W^*(G_1)$, and since $W^*(G_1)$ is a type I factor, there exists a normal semifinite faithful weight $\hat\varphi_1'$ on $W^*(G_1)$ with the same modular group. Consider the unique normal semifinite operator-valued weight $P\colon W^*(G_2)\to W^*(G_1)$ such that $\hat\varphi'_1 P=\hat\varphi_2$. Since $W^*(G_1)$ is a type I factor and $W^*(G_1)'\cap W^*(G_2)$ is finite dimensional, it follows, e.g.,~from~\cite[Corollary~12.12]{Stratila} applied to ${\mathcal M}=W^*(G_1)$ that such an operator-valued weight must be bounded, hence it is a scalar multiple of a conditional expectation. In particular, the weight $\hat\varphi_2|_{W^*(G_1)}$ is semifinite. This, in turn, implies, that $\hat\varphi_2|_{W^*(G_1)}$ is a Haar weight on $W^*(G_1)$, hence $\hat\varphi_2|_{W^*(G_1)}=c\,\hat\varphi_1$ for a constant $c>0$. We can also conclude that $\Delta_2|_{G_1}=\Delta_1$.

Denote by $E$ the conditional expectation obtained by rescaling $P$, so that $c\,\hat\varphi_1 E=\hat\varphi_2$. Using the identity $\hat\varphi_2(xy)=c\,\hat\varphi_1(E(x)y)$ for appropriate elements $x\in W^*(G_2)$ and $y\in W^*(G_1)$, it is not difficult to compute $E$ on a dense set of elements. Namely, if $x=\int_{G_2}F(g)\lambda_g\, d\mu_2(g)$, with $F=f* f'$ (convolution in $L^1(G_2)$) for some $f,f'\in C_c(G_2)$, then we must have
$$
E(x)=c^{-1}\int_{G_1}F(g)\lambda_g\,d\mu_1(g).
$$
Applying this to functions $f'\ge0$ supported in arbitrarily small neighbourhoods of the identity and normalized by $\|f'\|_1=1$, which form an approximate unit in $L^1(G_2)$, and using the assumed continuity of $E$, we conclude that
$$
E\Big(\int_{G_2}f(g)\lambda_g\, d\mu_2(g)\Big)=c^{-1}\int_{G_1}f(g)\lambda_g\,d\mu_1(g)\ \ \text{for all}\ \ f\in C_c(G_2).
$$
But this formula certainly defines an unbounded map, so we get a contradiction. Indeed, take any nonzero function $\tilde f\in C_c(G_1)$. Since $G_1$ has zero measure in $G_2$, we can extend $\tilde f$ to a function $f\in C_c(G_2)$ with the same supremum-norm but supported in a set of arbitrarily small Haar measure, so that the $L^1$-norm of $f$ can be made arbitrarily small. Since the operator norm is dominated by the $L^1$-norm, we thus see that the preimage of $\int_{G_1}\tilde f(g)\lambda_g\,d\mu_1(g)$ under $E$ has elements of arbitrarily small norm.
\ep

As we have already observed, the class $[\Omega]\in H^2(\hat G;\T)$ corresponds to the unit of $H^2(G;\T)$, so one might wonder whether $\Omega$ is also a coboundary, that is, cohomologous to $1$. But this is surely not the case, since $W^*(\hat G;\Omega)\cong B(H)$, while $W^*(\hat G)=L^\infty(G)$. In fact, the following stronger nontriviality property holds.

\begin{proposition}\label{prop:nontriviality}
If $G$ is a nontrivial second countable locally compact group with group von Neumann algebra a factor of type I, and $\Omega$ is the dual unitary $2$-cocycle given by Theorem~\ref{thm:genuine-rep}, then the twisted locally compact quantum group $(W^*(G),\Omega\Dhat(\cdot)\Omega^*)$ is neither commutative nor cocommutative.
\end{proposition}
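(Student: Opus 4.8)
The plan is to use that a locally compact quantum group is commutative exactly when its von Neumann algebra is commutative, and cocommutative exactly when the von Neumann algebra of its dual is commutative; so it suffices to produce a noncommutative von Neumann algebra on each side of the quantum group duality of $\mathbb K:=(W^*(G),\Omega\Dhat(\cdot)\Omega^*)$. For the first, $L^\infty(\mathbb K)=W^*(G)$, and since $G$ is nontrivial the left regular representation is nontrivial, so $W^*(G)\neq\C1$; being a type I factor it is then $B(H)$ with $\dim H\ge 2$, hence noncommutative. (As remarked after Theorem~\ref{thm:genuine-rep}, $W^*(G)$ being a type I factor even forces $\dim H=\infty$, but $\dim H\ge 2$ is all that is needed.) Thus $\mathbb K$ is not commutative, and this much does not even use the specific form of $\Omega$.

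For cocommutativity the first step is to identify $L^\infty(\hat{\mathbb K})$. Since $\Dhat(x)=\hat W^*(1\otimes x)\hat W$, one has $(\hat W\Omega^*)^*(1\otimes x)(\hat W\Omega^*)=\Omega\Dhat(x)\Omega^*$, and a computation with the pentagon relation for $\hat W$ and the dual cocycle identity~\eqref{eq:dual-cocycle}, of the type already carried out in the proof of Proposition~\ref{prop:dual-cocycle}, shows that $\hat W\Omega^*$ satisfies the pentagon equation, i.e.\ it is a multiplicative unitary for $\mathbb K$. Hence $L^\infty(\hat{\mathbb K})$ is generated by the slices $(\omega\otimes\iota)(\hat W\Omega^*)$, $\omega\in B(L^2(G))_*$; since the first leg of $\hat W\Omega^*$ lies in $W^*(G)$, these are the operators $\pi_\Omega(f)=(f\otimes\iota)(\hat W\Omega^*)$, $f\in A(G)$, so $L^\infty(\hat{\mathbb K})=\pi_\Omega(A(G))''=W^*(\hat G;\Omega)$, up to a harmless application of modular conjugations that affects neither commutativity nor factoriality. (Equivalently: $\mathbb K$ is obtained by reflecting $G$ across the $G$-Galois object $W^*(\hat G;\Omega)$ of Section~\ref{ss:dual-cocycles}, so that $\hat{\mathbb K}$ lives on it.)

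Now apply Proposition~\ref{prop:dual-cocycle} to the unitary dual cocycle $\Omega$ furnished by Theorem~\ref{thm:genuine-rep}: the Galois object $(W^*(\hat G;\Omega),\beta)$ is isomorphic to $(B(H),\Ad\pi)$, whence $W^*(\hat G;\Omega)\cong B(H)$, which is noncommutative since $\dim H\ge 2$. Therefore $L^\infty(\hat{\mathbb K})$ is noncommutative, so $\hat{\mathbb K}$ is not a commutative quantum group, and $\mathbb K$ is not cocommutative. This also sharpens the informal remark preceding the proposition, namely that $W^*(\hat G;\Omega)\cong B(H)$ whereas $W^*(\hat G)=L^\infty(G)$. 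The only step with real content is the identification $L^\infty(\hat{\mathbb K})=W^*(\hat G;\Omega)$, and even that is routine pentagon-and-cocycle bookkeeping of the kind already present in Section~\ref{ss:dual-cocycles}; I do not expect a genuine obstacle. The substance of the proposition is simply that the two type I factors $W^*(G)$ and $W^*(\hat G;\Omega)$ occupying the two sides of the duality of $\mathbb K$ are both honestly noncommutative.
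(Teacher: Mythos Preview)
Your noncommutativity argument matches the paper's. For non-cocommutativity you take a different route: you try to identify $L^\infty(\hat{\mathbb K})$ with the Galois object $W^*(\hat G;\Omega)\cong B(H)$, whereas the paper simply invokes Remark~\ref{rem:genuine-q}, which shows that reflecting a nonabelian group $G$ across any I-factorial Galois object $(B(H),\Ad\pi)$ yields a genuine (hence non-cocommutative) quantum group, and then observes that $(W^*(G),\Omega\Dhat(\cdot)\Omega^*)$ is precisely this reflection.

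Your argument contains a real error. The unitary $\hat W\Omega^*$ does \emph{not} satisfy the pentagon equation. What the computation in the proof of Proposition~\ref{prop:dual-cocycle} actually produces is the \emph{hybrid} pentagon relation
\[
\hat W_{12}(\hat W\Omega^*)_{13}(\hat W\Omega^*)_{23}=(\hat W\Omega^*)_{23}(\hat W\Omega^*)_{12},
\]
which is the relation for the Galois map $\G$ (up to the $\CJ$'s). To upgrade this to a genuine pentagon you would need $\hat W_{12}\Omega^*_{12}\hat W^*_{12}$ to commute with $(\hat W\Omega^*)_{23}$, i.e.\ the second leg of $\hat W(1\otimes a)\hat W^*$ to lie in $W^*(G)'$ for $a\in W^*(G)$; a direct check on $a=\lambda_g$ shows this fails whenever $G$ is nonabelian. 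So $\hat W\Omega^*$ is the Galois unitary, not a multiplicative unitary for $\mathbb K$; the actual multiplicative unitary is the more involved $\hat W_\Omega$ of~\eqref{eq:mult-unitary}. Consequently the identification of $L^\infty(\hat{\mathbb K})$ via right slices of $\hat W\Omega^*$ is not justified, and your fallback ``$\hat{\mathbb K}$ lives on $W^*(\hat G;\Omega)$'' conflates the Galois object (a bimodule on which both $G$ and $G_\Omega$ act) with the function algebra $L^\infty(G_\Omega)$ of the reflected quantum group; these are distinct objects in De Commer's framework. The paper's route through Remark~\ref{rem:genuine-q} avoids this entirely by arguing that $G_\Omega$ cannot be a classical group without ever computing $L^\infty(G_\Omega)$.
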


\bp
The algebra $W^*(G)$, being a nontrivial type I factor, is clearly noncommutative. This implies that the group $G$ is noncommutative. By Remark~\ref{rem:genuine-q} it follows that the quantum group $G_\Omega$ obtained by reflecting $G$ across the Galois object $(W^*(\hat G;\Omega),\beta)$ is a genuine quantum group, that is, the coproduct $\Dhat_\Omega$ on $W^*(G_\Omega)$ is not cocommutative. But $(W^*(G_\Omega),\Dhat_\Omega)$ is exactly $(W^*(G),\Omega\Dhat(\cdot)\Omega^*)$, see~\cite{DC}.
\ep

\subsection{Examples: subgroups of the affine group}

We now introduce the main class of examples studied in this paper.

Let $V$ be a nontrivial second countable locally compact abelian group, $Q$ be a second countable locally compact group of continuous automorphisms
of $V$ and call $G$ the semidirect product $Q\ltimes V\subset\Aff(V):=\Aut(V)\ltimes V$. It has the group law
$$
(q,v)(q',v')=(qq',v+qv'),\quad q,q'\in Q,\; v,v'\in V.
$$
The unit element is $(\id,0)$ and the inverse is $(q,v)^{-1}=(q^{-1},-q^{-1}v)$. Whenever convenient we identify $q\in Q$ with $(q,0)\in Q\ltimes V$ and $v\in V$ with $(\id,v)\in Q\ltimes V$. We will also usually write~$1$ instead of $\id$ for the unit of $Q$.

We denote by $q^\flat$ the dual action of $Q$ on $\hat V$ defined by the identity
$$
e^{i\langle q^\flat\xi,v\rangle}=e^{i\langle \xi,q^{-1}v\rangle},
$$
where we remind that $(\xi,v)\mapsto e^{i\langle \xi,v\rangle}$ simply denotes the duality paring $\hat V\times V\to\mathbb T$.

\smallskip

We will study the groups $Q\ltimes V$ satisfying the following property.

\begin{assumption}
\label{Fro}
There is an element $\xi_0\in\hat V$ such that the map
$$
\phi:Q\to\hat V,\quad q\mapsto q^\flat\xi_0,
$$
is a measure class isomorphism. In this case,
we say that $(V,Q)$  satisfies the {\bf dual orbit condition}.
\end{assumption}

We note in passing that the groups studied in~\cite{J} do \emph{not} satisfy this condition.

\begin{remark}
If $(V,Q)$ satisfies the  dual orbit condition, then $V$ cannot be compact, since the neutral element of $\hat V$ cannot belong to the $Q$-orbit of $\xi_0$ and therefore $\hat V$ cannot be discrete. %This still leaves the possibility that $V$ is discrete and $Q$ is compact.
Note also that the stabilizer of $\xi_0$ in $Q$ must be trivial, so that the map $\phi$ is injective.
\end{remark}

In order to give some concrete examples, let $\mathbb K$ be a nondiscrete locally compact skew-field.
If $\mathbb K$  is commutative then $\mathbb K$ is a local field. This means that $\mathbb K$  is isomorphic either to $\R,\C$, a finite degree extension of the field of $p$-adic numbers $\mathbb Q_p$ or to a field $\mathbb F_q((X))$
of Laurent series with coefficients in a finite field $\mathbb F_q$.
If $\mathbb K$  is a skew-field, then $\mathbb K$ is isomorphic to a   finite dimensional division algebra over a local field $k$. As an abelian group $\mathbb K$ is self-dual, with a pairing $\mathbb K\times \mathbb K\to\T$ given by $(a,b)\mapsto\chi(\Tr_{\mathbb K/k}(ab))$ for a nontrivial character $\chi$ of $k$. Denote by~$\chi_{\mathbb K}$ the character $\chi\circ\Tr_{\mathbb K/k}$.

\begin{example}
For  $n\ge1$, let
$$
V=\operatorname{Mat}_n(\mathbb K)\quad \mbox{and} \quad  Q=\operatorname{GL}_n(\mathbb K),
$$
with the action given by left  matrix multiplication.  Under the isomorphism $\hat V\simeq V$ associated with the pairing
 $(A,B)\mapsto \chi_{\mathbb K}({\rm Tr}(\prescript{t}{}\!AB))$, the dual action is  given by $(A,M)\mapsto \prescript{t}{}\!A^{-1}M$. Therefore,
 both $(V,Q)$ and $(\hat V,Q)$ satisfy the dual orbit condition.
\end{example}

\begin{example}
Let $\tau$ be any order-two ring automorphism of  $\operatorname{Mat}_n(\mathbb K)$.
Consider the quaternionic type group $\mathbb H_n^\pm(\mathbb K,\tau)$ given by the subgroup of
 ${\rm GL}_{2n}(\mathbb K)$ of elements  of the form
 $$
 \begin{pmatrix}
 A&B\\
\pm \tau(B)&\tau(A)
 \end{pmatrix},
 \quad A,B\in \operatorname{Mat}_n(\mathbb K).
 $$
Set
 $$
V=\operatorname{Mat}_n(\mathbb K)\oplus \operatorname{Mat}_n(\mathbb K) \quad \mbox{and} \quad Q=\mathbb H_n^\pm(\mathbb K,\tau).
$$
Here also,  both $(V,Q)$ and $(\hat V,Q)$ satisfy the dual orbit condition.
\end{example}

\begin{example} \label{ex:oo}
For $n\ge1$ and $m\ge 2$, let
$$
V=\underbrace{\operatorname{Mat}_n(\mathbb K)\oplus \cdots\oplus\operatorname{Mat}_n(\mathbb K)}_m \quad \mbox{and} \quad Q=
\begin{pmatrix}1&\cdots&0&0\\
\vdots& \ddots& \vdots&\vdots\\
0&\cdots&1&0\\
\operatorname{Mat}_n(\mathbb K)&\cdots&\operatorname{Mat}_n(\mathbb K)&{\rm GL}_n(\mathbb K)\end{pmatrix}\subset\operatorname{GL}_{nm}(\mathbb K).
$$
Then $(V,Q)$ satisfies the dual orbit condition, since for $M_1,\dots,M_m,B_1,\dots, B_{m-1}\in \operatorname{Mat}_n(\mathbb K)$ and $A\in {\rm GL}_n(\mathbb K)$ we have
$$
\begin{pmatrix}1&\cdots&0&0\\
\vdots& \ddots& \vdots&\vdots\\
0&\cdots&1&0\\
B_1&\cdots&B_{m-1}&A\end{pmatrix}^\flat
\begin{pmatrix}
M_1\\
\vdots\\
M_{m-1}\\
M_{m}
\end{pmatrix}=
%\begin{pmatrix}1&\cdots&0&0\\
%\vdots & \ddots& \vdots&\vdots\\
%0&\cdots&1&0\\
%-\prescript{t}{}B_1\prescript{t}{}A^{-1}&\cdots&-\prescript{t}{}B_k\prescript{t}{}A^{-1}&\prescript{t}{}A^{-1}\end{pmatrix}
%\begin{pmatrix}
%M_1\\
%\vdots\\
%M_k\\
%M_{k+1}
%\end{pmatrix}=
\begin{pmatrix}
M_1-\prescript{t}{}B_1\prescript{t}{}A^{-1}M_m\\
\vdots\\
M_{m-1}-\prescript{t}{}B_{m-1}\prescript{t}{}A^{-1}M_m\\
\prescript{t}{}A^{-1}M_m
\end{pmatrix}.
$$
On the other hand, the dual pair $(\hat V,Q)$ does not satisfy the dual orbit condition, as there is no $Q$-orbit of full measure in $V$.
\end{example}

\begin{example}
Let $\mathbb A$ be a  nondiscrete second countable locally compact (unital, but not necessarily commutative) ring, such that the set  $\mathbb A^\times$ of invertible
elements is of full Haar measure. Then, the pair $(\hat{\mathbb A},\mathbb A^\times$) satisfies the dual orbit condition. See
\cite[Section~4]{BSV} for such explicit examples. Note that some of these examples show that the map $\phi\colon Q\to\hat V$ is not necessarily open.
\end{example}

Let us now say a few words about the case when $Q$ is a real or complex Lie group, $V$ is a finite dimensional vector space and the action of $Q$ on $V$ is given by a representation $\rho$. In this case we can identify $\hat V$ with $V^*$, and the action of $Q$ on $V^*$ is given by the contragredient representation $\rho^c$. If $(Q,V)$ satisfies the dual orbit condition, then $Q$ has the same dimension as $V$ and the map $\phi\colon Q\to V^*$ is open, so $(V^*,\rho^c(Q))$ is a \emph{prehomogeneous vector space}.

\begin{remark}\label{rem:ds}
In the complex case, assuming $\dim Q=\dim V$, the dual orbit condition is satisfied for $\xi_0\in V^*$ as long as $\xi_0$ has trivial stabilizer. Indeed, consider the set $\Omega$ of vectors $\xi\in V^*$ such that the map $\mathfrak{q}\to V^*$, $X\mapsto (d\rho^c)(X)\xi$, is a linear isomorphism. This set is nonempty, as $\xi_0\in\Omega$, and Zariski open in $V^*$. It follows that it is a dense, connected, open subset of $V^*$ in the usual topology. Since the $Q$-orbit of every element of $\Omega$ is open, it follows that $\Omega$ consists entirely of one orbit, so the dual orbit condition is satisfied. As a byproduct we see that $Q$ must be connected.

Note that these arguments do not apply in the real case, as a Zariski open subset of $\R^n$ can have finitely many connected components.
\end{remark}

By a repeated use of the following simple lemma we can construct more and more complicated examples.

\begin{lemma}\label{lem:ds}
Assume $(Q,V)$ satisfies the dual orbit condition, with $Q$ a complex Lie group and the action of $Q$ given by a representation $\rho\colon Q\to \operatorname{GL}(V)$. Then the adjoint action of the Lie group $G:=Q\ltimes V$ on its Lie algebra $\g$ also satisfies the dual orbit condition.
\end{lemma}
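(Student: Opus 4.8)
The plan is to write out the adjoint action of $G=Q\ltimes V$ on $\g=\mathfrak q\ltimes V$ explicitly, dualize it, and reduce everything to Remark~\ref{rem:ds}. Since $V$ is an abelian ideal, $\g=\mathfrak q\oplus V$ with $V$ abelian and $[X,w]=d\rho(X)w$ for $X\in\mathfrak q$, $w\in V$. Computing conjugation in $G$ (using $(q,v)^{-1}=(q^{-1},-\rho(q^{-1})v)$) and differentiating at the identity gives
\[
\Ad_{(q,v)}(X,w)=\bigl(\Ad_qX,\ \rho(q)w-d\rho(\Ad_qX)v\bigr),\qquad (q,v)\in G,\ (X,w)\in\g .
\]
Identifying $\hat\g$ with $\g^*=\mathfrak q^*\oplus V^*$ as in Remark~\ref{rem:ds}, the dual action $g^\flat$ (the contragredient of $\Ad_g$) then has the triangular form
\[
(q,v)^\flat(\alpha,\beta)=\bigl(q\cdot\alpha+c(q,v,\beta),\ q^\flat\beta\bigr),
\]
where $q\cdot\alpha$ is the coadjoint action of $Q$ on $\mathfrak q^*$, $q^\flat\beta=\rho^c(q)\beta$ is the dual action of $Q$ on $V^*\cong\hat V$, and $c(q,v,\beta)\in\mathfrak q^*$ is the cocycle term determined by $\langle c(q,v,\beta),X\rangle=\langle\beta,d\rho(\Ad_{q^{-1}}X)\rho(q^{-1})v\rangle$; in particular $\langle c(1,v,\beta),X\rangle=\langle\beta,d\rho(X)v\rangle$.

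Now $G$ is a complex Lie group acting holomorphically on $\g$ via $\Ad$, and $\dim_{\mathbb C}G=\dim_{\mathbb C}\mathfrak q+\dim_{\mathbb C}V=\dim_{\mathbb C}\g$, so Remark~\ref{rem:ds} applies to the pair $(G,\g)$: it suffices to exhibit $\eta_0\in\g^*$ whose $G$-stabilizer is trivial. Let $\xi_0\in\hat V\cong V^*$ be the element provided by Assumption~\ref{Fro} for $(V,Q)$. Its $Q$-stabilizer is trivial, hence $\{X\in\mathfrak q:d\rho^c(X)\xi_0=0\}=\{0\}$, and since $\dim_{\mathbb C}\mathfrak q=\dim_{\mathbb C}V$ this forces $X\mapsto d\rho^c(X)\xi_0$ to be a linear isomorphism $\mathfrak q\to V^*$; equivalently $\xi_0\in\Omega$ in the notation of Remark~\ref{rem:ds}.

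I then claim $\eta_0:=(0,\xi_0)\in\mathfrak q^*\oplus V^*=\g^*$ has trivial $G$-stabilizer. Indeed, if $(q,v)^\flat\eta_0=\eta_0$, then comparing $V^*$-components gives $q^\flat\xi_0=\xi_0$, whence $q=1$ by triviality of the $Q$-stabilizer of $\xi_0$. With $q=1$, comparing $\mathfrak q^*$-components gives $c(1,v,\xi_0)=0$, i.e.\ $\langle\xi_0,d\rho(X)v\rangle=0$ for all $X\in\mathfrak q$; since $d\rho^c(X)=-d\rho(X)^{t}$ this reads $\langle d\rho^c(X)\xi_0,v\rangle=0$ for all $X$, and because $\{d\rho^c(X)\xi_0:X\in\mathfrak q\}=V^*$ we conclude $v=0$. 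Hence the $G$-stabilizer of $\eta_0$ is trivial, and Remark~\ref{rem:ds} yields that $(\g,G)$ satisfies the dual orbit condition.

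The two places demanding care are the derivation of the adjoint and coadjoint formulas (routine but error-prone) and the key observation that the $V$-direction of the stabilizer problem is governed by exactly the same nondegeneracy — surjectivity of $X\mapsto d\rho^c(X)\xi_0$ — that makes the $Q$-stabilizer of $\xi_0$ trivial; once this duality between the two conditions is noticed, the rest is bookkeeping, so I do not expect a real obstacle here.
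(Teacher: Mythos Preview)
Your proof is correct and follows essentially the same route as the paper: both choose $\eta_0=(0,\xi_0)\in\mathfrak q^*\oplus V^*$, invoke Remark~\ref{rem:ds} to reduce to triviality of the stabilizer, and then use triviality of the $Q$-stabilizer of $\xi_0$ for the $V^*$-component together with the isomorphism $X\mapsto d\rho^c(X)\xi_0$ for the $\mathfrak q^*$-component. Your version is simply more explicit about the coadjoint action and about why that linear map is an isomorphism, but the argument is the same.
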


\bp
By Remark~\ref{rem:ds} it suffices to find a vector $\eta_0\in\g^*$ with trivial stabilizer. By assumption there exists $\xi_0\in V^*$ with trivial stabilizer in $Q$. Identifying $\g$ with $\mathfrak q\times V$ we let $\eta_0(X,w):=\xi_0(w)$. The adjoint action is given by
$$
\big(\Ad (q,v)\big)(X,w)=\Big((\Ad q)(X),\rho(q)w- (d\rho)\big((\Ad q)(X)\big)v\Big).
$$
Assume now that $(q,v)\in G$ lies in the stabilizer of $\eta_0$, that is,
$$
\xi_0(\rho(q)w)-\xi_0\Big((d\rho)\big((\Ad q)(X)\big)v\Big)=\xi_0(w)\ \ \text{for all}\ \ X\in\mathfrak q\ \ \text{and}\ \ w\in V.
$$
Taking $X=0$, we see that $q$ stabilizes $\xi_0$, hence $q=e$. Since the map $\mathfrak{q}\to V^*$, $X\mapsto (d\rho^c)(X)\xi_0$, is an isomorphism, we then conclude that $v=0$.
\ep

This lemma and its proof show, both in real and complex cases,  that if $(Q,V)$ satisfies the dual orbit condition, then the Lie algebra of $G:=Q\ltimes V$ is \emph{Frobenius}, meaning that $G$ has an open coadjoint orbit, or equivalently, there exists $\eta_0\in\g^*$ such that the bilinear form $\eta_0([X,Y])$ on $\g$ is nondegenerate. This has already been observed in~\cite{Oo}. Moreover, the converse is almost true: by~\cite[Theorem~4.1]{Oo}, if $\dim Q=\dim V$ and the Lie algebra of $Q\ltimes V$ is Frobenius, then there exists $\xi_0\in V^*$ such that the map $\mathfrak{q}\to V^*$, $X\mapsto (d\rho^c)(X)\xi_0$, is a linear isomorphism. This is a bit less than what we need since the stabilizer of $\xi_0$ can still be a nontrivial discrete subgroup of $Q$ and, in the real case, the open set $\rho^c(Q)\xi_0\subset V^*$ can be nondense.

\smallskip

In addition to Lemma~\ref{lem:ds}, another way of producing new examples is to start with a pair $(Q,V)$ satisfying the dual orbit condition and multiply the representation of $Q$ on $V$ by a quasi-character of $Q$. This can destroy the dual orbit condition, but not necessarily.

\begin{example}
Consider $Q=\C^*$, $V=\C$, with $Q$ acting on $V$ by multiplication. Obviously, the dual orbit condition is satisfied. By Lemma~\ref{lem:ds} the adjoint action of the $ax+b$ group $G:=Q\ltimes V$ satisfies the dual orbit condition as well. This is basically Example~\ref{ex:oo} for $n=1$ and $m=2$. If we multiply the adjoint representation of $G$ by the quasi-character $(q,v)\mapsto q^k$, $k\in\Z$, we get the representation
$$
\rho_k\colon G\to\operatorname{GL}_2(\C),\quad \rho_k(q,v)=\begin{pmatrix} q^k & 0\\ -q^kv & q^{k+1} \end{pmatrix}.
$$
The action of $G$ on $\C^2$ defined by $\rho_k$ has a dense open dual orbit if and only if $k\ne-1$. The dual orbit condition is satisfied if and only if $k=0$ or $k=-2$, while for other $k\ne-1$ the stabilizers of the points on the dense dual orbit are finite and nontrivial. We can use the same formulas in the real case. Then the dual orbit condition is satisfied if and only if $k$ is even.

We remark in passing that for $k=1$ the group $G\ltimes_{\rho_1}\C^2$ is an extension of $\C^*$ by the Heisenberg group. It is used in the construction of an extended Jordanian twist in~\cite{KLM}.
\end{example}

Returning to the general case, we have the following result.

\begin{proposition}\label{prop:G-type-I}
If a pair $(V,Q)$ satisfies the dual orbit condition, then the group von Neumann algebra of $G=Q\ltimes V$ is a type I factor.
\end{proposition}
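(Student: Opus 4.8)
The plan is to identify $W^*(G)$ with a crossed product and then use the dual orbit condition to recognize this crossed product as $B(L^2(Q))$. Since $V$ is abelian and normal in $G=Q\ltimes V$, the Fourier transform gives $W^*(V)\cong L^\infty(\hat V)$, and, as the group von Neumann algebra of a semidirect product is the associated crossed product, $W^*(G)\cong W^*(V)\rtimes Q\cong L^\infty(\hat V)\rtimes Q$. Here $Q$ acts on $L^\infty(\hat V)$ by the dual of its conjugation action on $V$: conjugation by $q$ sends $\lambda^V_v$ to $\lambda^V_{qv}$, which under the Fourier transform becomes the action $f\mapsto f\circ(q^{-1})^\flat$, that is, the action of $Q$ on the measure space $(\hat V,\ \xi\mapsto q^\flat\xi)$. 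Only the measure class of the dual action enters the crossed product construction, so the moduli $|q|_V$ are irrelevant.

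Next I would invoke Assumption~\ref{Fro}. The map $\phi\colon Q\to\hat V$, $\phi(q)=q^\flat\xi_0$, satisfies $\phi(pq)=p^\flat\phi(q)$, so it intertwines the left translation action of $Q$ on itself with the dual action on $\hat V$, and by hypothesis it is a measure class isomorphism. It therefore induces a $Q$-equivariant isomorphism $L^\infty(\hat V)\cong L^\infty(Q)$, the latter carrying the translation action, whence $W^*(G)\cong L^\infty(Q)\rtimes Q$ for that action. Finally, the crossed product of $L^\infty(Q)$ by the translation action of $Q$ is isomorphic to $B(L^2(Q))$; this is well known (it is, e.g., an instance of Takesaki duality), but can also be seen directly: conjugating the standard representation on $L^2(Q\times Q)$ by $(U\xi)(g,x)=\xi(g,g^{-1}x)$ carries the crossed product onto the von Neumann algebra generated by $1\otimes L^\infty(Q)$ and $\{\lambda_g\otimes\lambda_g\}_{g\in Q}$, whose commutant is readily computed to be $\{\int^\oplus\lambda_x S\lambda_x^{-1}\,dx:S\in B(L^2(Q))\}\cong B(L^2(Q))$, so that the crossed product, being the commutant of a type I factor, is itself a type I factor. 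Combining the three steps, $W^*(G)\cong B(L^2(Q))$ is a type I factor.

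I do not foresee a genuine obstacle. The only points requiring care are the measure-theoretic bookkeeping in the first step (making $W^*(G)\cong L^\infty(\hat V)\rtimes Q$ precise, including the moduli $|q|_V$, which do not affect the isomorphism class) and, for a self-contained account, the identification of a translation crossed product with all bounded operators. One could also bypass crossed products entirely: the dual orbit condition says the $Q$-action on $\hat V$ is essentially transitive with trivial little group, so by the Mackey machine $G$ has an essentially unique irreducible representation, namely the canonical one on $L^2(V)\cong L^2(Q)$; the regular representation is quasi-equivalent to it, and hence $W^*(G)$ is isomorphic to $B(L^2(V))$.
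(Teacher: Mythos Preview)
Your proof is correct and follows essentially the same route as the paper's own argument: identify $W^*(G)\cong L^\infty(\hat V)\rtimes Q$ via the partial Fourier transform, use the dual orbit condition to replace $(\hat V,Q)$ by $(Q,\text{left translation})$, and conclude $W^*(G)\cong B(L^2(Q))$. You supply considerably more detail than the paper (which dispatches the result in three lines), including a sketch of why the translation crossed product is $B(L^2(Q))$ and an alternative via the Mackey machine.
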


\bp
By conjugating by the partial Fourier transform $\CF_V$ we get $W^*(G)\cong L^\infty(\hat V)\rtimes Q$. But by the dual orbit condition the space $\hat V$, considered as a measure space equipped with an action of $Q$, can be identified with $Q$ equipped with the action of $Q$ by left translations. Hence $W^*(G)\cong B(L^2(Q))$.
\ep

By Theorem~\ref{thm:genuine-rep} it follows that we get a canonical class $[\Omega]\in H^2(\hat G;\T)$ defined by the Galois object $(W^*(G),\Ad\lambda)$. In order to get an explicit representative of this class we need to fix a (unique up to equivalence) representation~$\pi$ as in that theorem and choose a unitary quantization map~\eqref{eq:op}. This is the task we are going to undertake in the next section.

\section{Kohn--Nirenberg quantization}\label{sec:KN}

\subsection{Kohn--Nirenberg quantization of \texorpdfstring{$V\times\hat V$}{VxhatV}}

Quantization of the abelian self-dual group $V\times\hat V$ has been considered long ago \cite{Segal, Weil}. Here we consider the Kohn--Nirenberg
 quantization. It has some benefit compared with the Weyl type quantization
for which one has to assume the map $V\to V$, $v\mapsto 2v$, to be a homeomorphism.

The Kohn--Nirenberg quantization can initially be defined as the continuous injective linear map
\begin{align*}
\OKN:\mathscr{S}'(V\times\hat V)\longrightarrow \mathscr L\big(\mathscr{S}(V),\mathscr{S}'(V)\big),
\end{align*}
from tempered Bruhat distributions on $V\times\hat V$ to
continuous linear operators from the Bruhat-Schwartz space on $V$ to  tempered Bruhat distributions on $V$ (see \cite[Section 9]{Bruhat} for a precise definition).
It is defined by the formula
$$
\OKN(F)\vf(v):=\int_{V\times\hat V} e^{i \langle\xi,v-v'\rangle}\,F(v',\xi)\,\vf(v')\,dv'd\xi,\quad F\in\mathscr{S}'(V\times\hat V),\; \vf\in \mathscr{S}(V).
$$

\begin{remark}
\label{AKNQ}
This quantization map should rather be called anti-Kohn--Nirenberg. The Kohn--Nirenberg one is given by the formula
$$
\int_{V\times\hat V} e^{i \langle\xi,v-v'\rangle}\,F(v,\xi)\,\vf(v')\,dv'd\xi.
$$
The two quantization maps are unitarily  equivalent and for our purposes it is easier to work with anti-Kohn--Nirenberg.
\end{remark}

The distributional kernel of the operator $\OKN(F)$ is therefore  given by
\begin{align}
\label{KO}
(v,v')\mapsto\big((1\otimes\CF_V^*) F\big)(v',v-v').
\end{align}
Since an operator on $L^2(V)$ with kernel $K$ is Hilbert--Schmidt if and only if $K\in L^2(V\times V)$, we immediately deduce:
\begin{lemma}
\label{Unit}
The Kohn--Nirenberg quantization map $\OKN$ extends to a unitary isomorphism from $L^2(V\times\hat V)$ onto $\HS(L^2(V))$.
\end{lemma}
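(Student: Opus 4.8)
The statement to prove is that $\OKN$ extends to a unitary isomorphism $L^2(V\times\hat V)\to\HS(L^2(V))$. The plan is to read off everything from the kernel formula~\eqref{KO} together with the standard Hilbert--Schmidt identification and the unitarity of the Fourier transform $\CF_V$ (as normalized in the Preliminaries). Recall that an operator $T$ on $L^2(V)$ is Hilbert--Schmidt precisely when it is given by an integral kernel $K_T\in L^2(V\times V)$, and the map $T\mapsto K_T$ is a unitary isomorphism $\HS(L^2(V))\xrightarrow{\ \sim\ }L^2(V\times V)$ (with respect to the Haar measure $dv\,dv'$). So it suffices to exhibit $\OKN$, viewed through this identification, as a composition of unitaries $L^2(V\times\hat V)\to L^2(V\times V)$.

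First I would observe that~\eqref{KO} says the kernel of $\OKN(F)$ is $K_{\OKN(F)}(v,v')=\big((1\otimes\CF_V^*)F\big)(v',v-v')$; one should double-check this by plugging $F$ into the defining integral and recognizing $\int_{\hat V}e^{i\langle\xi,v-v'\rangle}F(v',\xi)\,d\xi$ as the inverse (partial) Fourier transform $(1\otimes\CF_V^*)F$ evaluated at $(v',v-v')$, which is immediate from the conventions fixed in the Preliminaries. Thus, under the HS--kernel identification, $\OKN$ becomes the map $L^2(V\times\hat V)\to L^2(V\times V)$ given by
$$
F\longmapsto \big[(v,v')\mapsto \big((1\otimes\CF_V^*)F\big)(v',v-v')\big].
$$
Now decompose this into two steps: (a) apply $1\otimes\CF_V^*$, which is unitary $L^2(V\times\hat V)\to L^2(V\times V)$ by the choice of normalization of $d\xi$ (Lemma~\ref{Unit}'s hypotheses are met because $\CF_V$ was normalized to be unitary); (b) apply the change of variables $(w,v')\mapsto(v,v')$ with $w=v-v'$, i.e. precompose with the invertible affine substitution $(v,v')\mapsto(v-v',v')$ on $V\times V$. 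Step (b) is a measure-preserving bijection of $V\times V$ since Haar measure is translation-invariant, so it induces a unitary on $L^2(V\times V)$. The composition of two unitaries is unitary, and surjectivity is clear since both steps are invertible; this gives the claim.

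The only real point requiring a word of care — and the closest thing to an obstacle — is the passage from the distributional definition of $\OKN$ on $\mathscr S'(V\times\hat V)$ to a genuine bounded operator defined on all of $L^2(V\times\hat V)$: one must note that $\mathscr S(V\times\hat V)$ is dense in $L^2(V\times\hat V)$, that on this dense subspace $\OKN$ agrees with the unitary just constructed (by the kernel computation above, valid pointwise for Schwartz $F$), and hence $\OKN$ extends uniquely by continuity to the stated unitary, with the extension still given by~\eqref{KO} interpreted in the $L^2$ sense. This density and consistency check is routine given the properties of the Bruhat--Schwartz space recalled from~\cite{Bruhat}, so I do not expect any genuine difficulty.
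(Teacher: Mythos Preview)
Your proposal is correct and follows essentially the same approach as the paper: the paper simply observes that, by the kernel formula~\eqref{KO} and the standard identification of $\HS(L^2(V))$ with $L^2(V\times V)$, the claim is immediate, while you spell out the same argument in more detail by factoring the kernel map as the unitary $1\otimes\CF_V^*$ followed by a measure-preserving change of variables. (A minor notational slip: the substitution in step~(b) also involves a swap of the two $V$-coordinates, since the kernel reads $G(v',v-v')$ rather than $G(v-v',v')$, but this is of course still measure-preserving.)
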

Hence, the Hilbert space $L^2(V\times\hat V)$ can be endowed with an associative product
\begin{align}
\label{S0}
f_1 \star _0 f_2:=\OKN^*\big(\OKN(f_1)\OKN(f_2)\big).
\end{align}

Very important here are the symmetries of the Kohn--Nirenberg product $\star _0$. We have an action of $\Aff(V)$ on $V\times \hat V$:
\begin{align}
\label{actW}
{\rm Aff}(V)\times (V\times \hat V)\to V\times \hat V,\quad (q,v).(w,\xi)=(qw+v,q^\flat\xi).
\end{align}
With $|\cdot|_V$ the modulus function of ${\rm Aut}(V)$ and $|\cdot|_{\hat V}$ the one of ${\rm Aut}(\hat V)$, we observe that
$$
|q^\flat|_{\hat V}=|q|_V^{-1}.
$$
We will usually use only the modulus $|q|_V$ in various formulas and write it simply as $|q|$.
The action~\eqref{actW} gives rise to a unitary representation $\pi_{V\times\hat V}$ of ${\rm Aff}(V)$ on $L^2(V\times\hat V)$ defined~by
\begin{align}
\label{eq:repVVhat}
\pi_{V\times\hat V}(q,v)\vf(w,\xi)=\vf(q^{-1}(w-v),{q^{-1}}^\flat\xi).
\end{align}
We can also define a unitary representation $\pi_V$ of $\Aff(V)$ on $L^2(V)$ by
\begin{equation}\label{eq:repV}
(\pi_V(q,v)\vf)(w)=|q|^{-1/2}\vf(q^{-1}(w-v)).
\end{equation}

\begin{lemma}\label{lem:equivariance}
For all $f\in L^2(V\times\hat V)$ and $(q,v)\in\Aff(V)$, we have
$$
\OKN(\pi_{V\times\hat V}(q,v)f)=\pi_V(q,v)\OKN(f)\pi_V(q,v)^*.
$$
In particular, the operators of the representation  $\pi_{V\times\hat V}$ of ${\rm Aff}(V)$ act on $L^2(V\times \hat V)$ by algebra automorphisms for
$ \star _0$.
\end{lemma}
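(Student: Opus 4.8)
The statement to prove is Lemma~\ref{lem:equivariance}, the equivariance of the Kohn--Nirenberg quantization $\OKN$ with respect to the action of $\Aff(V)$. The cleanest route is a direct computation at the level of distributional kernels, using the explicit formula~\eqref{KO}. Recall that $\OKN(F)$ has kernel $(v,v')\mapsto\big((1\otimes\CF_V^*)F\big)(v',v-v')$. So I would first record what the two sides of the claimed identity look like as operators with kernels: the kernel of $\OKN(\pi_{V\times\hat V}(q,v)f)$, and the kernel of $\pi_V(q,v)\,\OKN(f)\,\pi_V(q,v)^*$, and then verify they agree.

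For the left-hand side, I would compute $(1\otimes\CF_V^*)\big(\pi_{V\times\hat V}(q,v)f\big)$ from~\eqref{eq:repVVhat}: substituting $w\mapsto q^{-1}(w-v)$ in the $V$-variable and $\xi\mapsto {q^{-1}}^\flat\xi$ in the $\hat V$-variable, and using the defining relation $e^{i\langle q^\flat\xi,w\rangle}=e^{i\langle\xi,q^{-1}w\rangle}$ of the dual action together with the change-of-variable factor $|q|$ coming from the modulus (and the relation $|q^\flat|_{\hat V}=|q|_V^{-1}$). This turns the partial inverse Fourier transform of $\pi_{V\times\hat V}(q,v)f$ into a shifted and rescaled partial inverse Fourier transform of $f$; plugging into~\eqref{KO} gives the kernel of the left-hand operator. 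For the right-hand side, $\pi_V(q,v)$ acts on $L^2(V)$ by $(\pi_V(q,v)\vf)(w)=|q|^{-1/2}\vf(q^{-1}(w-v))$, so conjugating an integral operator with kernel $K(w,w')$ by $\pi_V(q,v)$ produces the kernel $|q|^{-1}K\big(q^{-1}(w-v),\,q^{-1}(w'-v)\big)$. Applying this to $K(w,w')=\big((1\otimes\CF_V^*)f\big)(w',w-w')$ and simplifying — noting that the difference of the two shifted arguments is $q^{-1}(w-w')$, so the $v$-dependence drops out of the second slot — I would match it term-by-term with the left-hand side. The remaining ``in particular'' assertion is then immediate: since $\OKN$ is a unitary intertwiner by Lemma~\ref{Unit} and the displayed identity says $\pi_{V\times\hat V}(q,v)=\OKN^*\,\Ad\!\big(\pi_V(q,v)\big)\,\OKN$, and $\Ad\!\big(\pi_V(q,v)\big)$ is clearly an algebra automorphism of $\HS(L^2(V))$ for operator composition, it follows that $\pi_{V\times\hat V}(q,v)$ is an automorphism of $\big(L^2(V\times\hat V),\star_0\big)$ by transport of structure through~\eqref{S0}.

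I would carry out the steps in this order: (1) write both kernels; (2) do the bookkeeping on the left-hand side, which is where the dual pairing identity and the modulus factors enter; (3) do the (easier) conjugation computation on the right-hand side; (4) equate and conclude; (5) deduce the automorphism statement formally. The main obstacle is not conceptual but a matter of being careful with the normalizations: keeping track of the factor $|q|$ versus $|q|^{1/2}$ on each side, the direction of the substitutions $q$ versus $q^{-1}$, and the precise placement of the shift by $v$ in the two kernel slots. It is worth doing the computation first for $(q,v)=(1,v)$ (a pure translation, where only the shift matters) and then for $(q,v)=(q,0)$ (a pure dilation, where only the moduli and the dual pairing matter), and finally combining; this isolates the two sources of potential sign/normalization error. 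One should also be slightly mindful that the identity is first verified on a dense subspace — say Schwartz functions, where all manipulations of~\eqref{KO} and the partial Fourier transform are literally valid — and then extended to all of $L^2(V\times\hat V)$ by continuity, using that both sides are bounded (indeed unitary) operators.
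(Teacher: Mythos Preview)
Your proposal is correct and follows essentially the same direct-computation approach as the paper; the only cosmetic difference is that you compare integral kernels via~\eqref{KO}, whereas the paper applies both sides to a test vector $\vf\in L^2(V)$ and manipulates the resulting integrals (using the invariance of $dw'\,d\xi$ under $(w',\xi)\mapsto(qw',q^\flat\xi)$ and translation invariance of $dw'$). Your suggestion to check pure translations and pure dilations separately is a sound way to isolate the normalization bookkeeping, though the paper does both at once.
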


\begin{proof}
Take $\vf\in L^2(V)$. Then

\medskip
$\displaystyle \big(\OKN(\pi_{V\times\hat V}(q,v)(f))\pi_V(q,v)\vf\big)(w)$
\smallskip
\begin{align}
&=\int_{V\times\hat V}e^{i\langle\xi,w-w'\rangle}(\pi_{V\times\hat V}(q,v)f)(w,\xi)(\pi_V(q,v)\vf)(w')\,dw'd\xi\nonumber\\
&=|q|^{-1/2}\int_{V\times\hat V}e^{i\langle\xi,w-w'\rangle}f(q^{-1}(w'-v),{q^{-1}}^\flat\xi)\vf(q^{-1}(w'-v))\,dw'd\xi.\label{eq:int22}
\end{align}
On the other hand,
\begin{align*}
\big(\pi_V(q,v)\OKN(f)\vf\big)(w)&=|q|^{-1/2}\big(\OKN(f)\vf\big)(q^{-1}(w-v))\\
&=|q|^{-1/2}\int_{V\times\hat V}e^{i\langle\xi,q^{-1}(w-v)-w'\rangle}f(w',\xi)\vf(w')\,dw'd\xi.
\end{align*}
Using that
$$
e^{i\langle\xi,q^{-1}(w-v)-w'\rangle}=e^{i\langle q^\flat\xi,w-v\rangle}e^{i\langle q^\flat\xi,-qw'\rangle}
$$
and that the Haar measure $dw'd\xi$ of $V\times \hat V$ is invariant under the transformations  $(w',\xi)\mapsto(qw',q^\flat\xi)$,
we can write the last integral as
$$
|q|^{-1/2}\int_{V\times\hat V}e^{i\langle \xi,w-v\rangle}e^{i\langle \xi,-w'\rangle}f(q^{-1}w',{q^{-1}}^\flat\xi)\vf(q^{-1}w')\,dw'd\xi,
$$
and this is equal to~\eqref{eq:int22} by translation invariance of the Haar measure $dw'$ on $V$.
\end{proof}

\subsection{Dual cocycles on subgroups of \texorpdfstring{$\Aff(V)$}{Aff(V)} }

Let $Q\ltimes V\subset \Aff(V)$ be a subgroup satisfying the dual orbit Assumption~\ref{Fro}.
Let $d_Q(q)$ be a Haar measure on $Q$ and $\Delta_Q$  the
modular function. Routine computations show that
a left invariant Haar  measure and modular function on $G$ are respectively given by
$$
d(q,v)=\frac{d_Q(q)\,dv}{|q|}\quad\mbox{and}\quad \Delta(q,v)=\frac{\Delta_Q(q)}{|q|}.
$$
%Therefore, the right-invariant Haar measure of $G$ is given by the product measure of the right-invariant Haar measure of $Q$
%times the Haar measure of $V$:
%$$
%d\big((q,v)^{-1}\big)=d_Q(q^{-1})dv=\frac{d_Q(q)dv}{\Delta_Q(q)}.
%$$

The measure class isomorphism:
\begin{align}
\label{TM}
G\to V\times \hat V:(q,v) \mapsto (q,v).(0,\xi_0)=(v,q^\flat \xi_0),
\end{align}
intertwines the left action of $G$ on itself with the  action \eqref{actW} on $V\times \hat V$.
Since  the  Haar measure on $V\times \hat V$ is invariant under the affine action of $G$ given in \eqref{actW},
the pull-back by the map~\eqref{TM} of the Haar measure on $V\times \hat V$
defines a nonzero left invariant Radon measure on $G$ and thus is a scalar multiple  of the Haar measure of $G$. We therefore may assume that the Haar measure on~$Q$ is normalized so that the $G$-equivariant linear operator $\tilde U_\phi\colon L^2(V\times \hat V)\to L^2(G)$ defined~by
\begin{equation}\label{eq:tilde-U-phi}
(\tilde U_\phi f)(q,v):=f\big(v,\phi(q) \big),
\end{equation}
where $\phi(q)=q^\flat\xi_0$, is unitary. Equivalently, our normalization is such that the Haar measure~$d\xi$ on~$\hat V$ is the push-forward under $\phi$ of the measure $|q|^{-1}d_Q(q)$ on $Q$, that is,
\begin{equation}\label{eq:dqdxi}
\int_Q f(\phi(q))\frac{d_Q(q)}{|q|}=\int_{\hat V}f(\xi)\,d\xi\ \ \text{for all}\ \ f\in L^1(\hat V).
\end{equation}
Note in passing that this uniquely determines the Haar measure on $G$: if we multiply $dv$ by a scalar, then~\eqref{eq:dqdxi} and unitarity of $\CF_V$ force us to divide $d_Q(q)$ by the same scalar.

Therefore we get a unitary quantization map
\begin{align}
\label{Op}
\Op\colon L^2(G)\to \HS(L^2(V)),\ \ \Op(f)=\OKN(\tilde U_\phi^* f),
\end{align}
and if we denote by $\pi$ the canonical representation of $G\subset\Aff(V)$ on $L^2(V)$, that is, $\pi={\pi_V}|_G$, then by Lemma~\ref{lem:equivariance} we have
$$
\Op(\lambda_g f)=\pi(g)\Op(f)\pi(g)^*.
$$

\begin{lemma}\label{lem:DM}
The representation $\pi$ of $G$ on $L^2(V)$ is irreducible, square-integrable and the associated Duflo--Moore formal degree operator is
$K=\CF_V^*\,M(\Delta^{-1}\circ\phi^{-1})\,\CF_V$.
\end{lemma}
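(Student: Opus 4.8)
The plan is to transport everything through the partial Fourier transform $\CF_V$. A short computation starting from~\eqref{eq:repV} shows that on $L^2(\hat V)$ the representation $\tilde\pi:=\CF_V\,\pi(\cdot)\,\CF_V^*$ is given by
$$
(\tilde\pi(q,v)\psi)(\xi)=|q|^{1/2}\,e^{-i\langle\xi,v\rangle}\,\psi\big((q^{-1})^\flat\xi\big),
$$
and that each $\tilde\pi(q,v)$ is unitary (using $|q^\flat|_{\hat V}=|q|^{-1}$). For irreducibility: for $v\in V$ the operator $\tilde\pi(v)$ is multiplication by the character $e^{-i\langle\cdot,v\rangle}$, and as $v$ runs over $V$ these generate the maximal abelian algebra $L^\infty(\hat V)\subset B(L^2(\hat V))$, so any operator commuting with $\tilde\pi(G)$ is a multiplication operator $M(h)$ with $h\in L^\infty(\hat V)$; commuting in addition with $\tilde\pi(q)$ for $q\in Q$ forces $h\circ q^\flat=h$ a.e. By Assumption~\ref{Fro} the map $\phi$ intertwines left translation on $(Q,d_Q)$ with the action $q\cdot\xi:=q^\flat\xi$ on $\hat V$, and since the left-translation action of $Q$ on $(Q,d_Q)$ is ergodic, so is the $Q$-action on $\hat V$; hence $h$ is a.e.\ constant and $\pi$ is irreducible.

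For square-integrability and the identification of $K$, I would fix nonzero $\vf,\zeta\in C_c(V)$ and compute $\int_G|(\pi(g)\vf,\zeta)|^2\,dg$ directly. Substituting $u=q^{-1}(w-v)$ gives $(\pi(q,v)\vf,\zeta)=|q|^{1/2}\,h_q(v)$ with $h_q(v):=\int_V\vf(u)\overline{\zeta(qu+v)}\,du$, and since $d(q,v)=|q|^{-1}d_Q(q)\,dv$ the factor $|q|$ cancels, leaving $\int_Q\|h_q\|_{L^2(V)}^2\,d_Q(q)$. Plancherel in the variable $v$ identifies $\CF_V h_q$ with $\xi\mapsto(\CF_V\vf)\big(-(q^{-1})^\flat\xi\big)\,\overline{(\CF_V\zeta)(-\xi)}$, so after the substitution $\xi\mapsto-\xi$ and Tonelli,
$$
\int_G|(\pi(g)\vf,\zeta)|^2\,dg=\int_{\hat V}|(\CF_V\zeta)(\xi)|^2\bigg(\int_Q\big|(\CF_V\vf)\big((q^{-1})^\flat\xi\big)\big|^2\,d_Q(q)\bigg)d\xi .
$$

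It is the inner $Q$-integral where Assumption~\ref{Fro} does the real work. Writing $\Phi:=\CF_V\vf$ and $\xi=\phi(r)$, the equivariance relation $\phi(q_1q_2)=q_1^\flat\phi(q_2)$ gives $(q^{-1})^\flat\xi=\phi(q^{-1}r)$, so the inner integral equals $\int_Q|(\Phi\circ\phi)(q^{-1}r)|^2\,d_Q(q)$. The change of variables $q\mapsto q^{-1}r$ carries the left Haar measure $d_Q$ to the right Haar measure $\Delta_Q^{-1}d_Q$, i.e.\ $\int_Q F(q^{-1}r)\,d_Q(q)=\int_Q F(q)\,\Delta_Q(q)^{-1}\,d_Q(q)$ for every $r$, so this becomes $\int_Q|(\Phi\circ\phi)(q)|^2\,\Delta_Q(q)^{-1}\,d_Q(q)$, \emph{independent of $\xi$}; and since $\Delta_Q(q)^{-1}\,d_Q(q)=\Delta(q,0)^{-1}|q|^{-1}\,d_Q(q)$, the normalization~\eqref{eq:dqdxi} turns it into $\int_{\hat V}|\Phi(\xi)|^2\,(\Delta^{-1}\circ\phi^{-1})(\xi)\,d\xi$. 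Altogether
$$
\int_G|(\pi(g)\vf,\zeta)|^2\,dg=\|\zeta\|^2\,\big\|\,\CF_V^*\,M\big((\Delta^{-1}\circ\phi^{-1})^{1/2}\big)\,\CF_V\,\vf\,\big\|^2 ,
$$
which is finite for every $\vf$ with $\CF_V\vf$ supported on a level set $\{\Delta^{-1}\circ\phi^{-1}\le n\}$, a dense set of $\vf$; this gives square-integrability, and comparing with the defining relation $\int_G|(\pi(g)\vf,\zeta)|^2\,dg=\|K^{1/2}\vf\|^2\|\zeta\|^2$ yields $K=\CF_V^*\,M(\Delta^{-1}\circ\phi^{-1})\,\CF_V$. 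As a consistency check one verifies the scaling property~\eqref{eq:scaling2} directly, using $\tilde\pi(q,v)\,M(h)\,\tilde\pi(q,v)^*=M\big(h\circ(q^{-1})^\flat\big)$, $\phi^{-1}\big((q^{-1})^\flat\xi\big)=q^{-1}\phi^{-1}(\xi)$ and the homomorphism property of $\Delta$. The one genuinely delicate point in all of this is keeping the powers of the modular functions of the possibly non-unimodular groups $Q$ and $G$ straight in the change of variables on $Q$ and matching them against the Haar-measure normalization~\eqref{eq:dqdxi}; the Fourier/Fubini manipulations are routine, provided one first carries them out on $C_c(V)$ (or the Bruhat--Schwartz space) and then extends by density.
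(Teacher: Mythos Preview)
Your proof is correct and follows essentially the same strategy as the paper: conjugate by the Fourier transform so that $V$ acts by multiplication operators (giving irreducibility via maximal abelianness of $L^\infty(\hat V)$ and ergodicity of the $Q$-action), then compute the orthogonality integral using Plancherel and a change of variables on~$Q$ governed by the dual orbit assumption. The only cosmetic difference is that the paper pushes one step further and transports all the way to $L^2(Q)$ via the unitary $\mathcal V\colon L^2(V)\to L^2(Q)$, $(\mathcal V\varphi)(q)=|q|^{-1/2}(\CF_V\varphi)(q^\flat\xi_0)$, so that the representation becomes $(\tilde\pi(q,v)\varphi)(q')=e^{-i\langle\phi(q'),v\rangle}\varphi(q^{-1}q')$ and the Duflo--Moore operator is simply multiplication by $q\mapsto|q|/\Delta_Q(q)$ on $L^2(Q)$; this absorbs your change-of-variables step into the intertwiner and lets one take test vectors in $C_c(Q)$ directly, but the content is the same.
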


Since $\Delta$ is a $V$-invariant function on $G$, it is naturally viewed as a function on $Q$. Therefore by $M(\Delta^{-1}\circ\phi^{-1})$ we mean the operator of multiplication by the function $\xi\mapsto \Delta(\phi^{-1}(\xi))^{-1}=|\phi^{-1}(\xi)|/\Delta_Q(\phi^{-1}(\xi))$.

\bp
It is more convenient to work on $L^2(Q)$ with the equivalent representation $\tilde\pi$ given by
$$
\tilde\pi(q,v):=\mathcal V\,\pi(q,v)\, \mathcal V^*,
$$
where $\mathcal V:L^2(V)\to L^2(Q)$, $(\mathcal V\vf)(q):=|q|^{-1/2}(\CF_V\vf)(q^\flat\xi_0)$. A quick computation shows that
$$
(\tilde\pi(q,v)\vf)(q')=e^{-i\langle\phi(q'),v\rangle}\,\vf(q^{-1}q').
$$

The restriction of the representation $\pi$ to $V$ is simply the regular representation. It follows that any operator in $B(L^2(V))$ commuting with $\pi(G)$ must belong to $\pi(V)''$. Passing to the equivalent representation $\tilde\pi$, this means that any operator in $B(L^2(Q))$ commuting with $\tilde\pi(G)$ must be an operator of multiplication by a function in $L^\infty(Q)$. In addition this function must be invariant under the left translations on $Q$, hence it is constant. Thus $\tilde\pi$ is irreducible.

\smallskip

Turning to square-integrability, for $\vf_1,\vf_2\in C_c(Q)$, we have
\begin{align*}
(\tilde\pi(q,v)\vf_1,\vf_2)&=\int_Q e^{-i\langle\phi(q'),v\rangle}\,\vf_1(q^{-1} q')\overline{\vf_2(q')}\,d_Q(q')\\
&= \int_{\hat V} e^{-i\langle\xi,v\rangle}\,\vf_1\big(q^{-1}\phi^{-1}(\xi)\big)\,\overline{\vf_2\big(\phi^{-1}(\xi)\big)}\,|\phi^{-1}(\xi)|\,d\xi,
\end{align*}
with the second equality following from~\eqref{eq:dqdxi}. If we set
$$
f_q(\xi) :=  \vf_1\big(q^{-1}\phi^{-1}(\xi)\big)\,\overline{\vf_2\big(\phi^{-1}(\xi)\big)}\,|\phi^{-1}(\xi)|,
$$
then, for every fixed $q\in Q$, the function $f_q$ has compact essential support and is bounded. Hence $f_q\in L^2(\hat V)$ and
$$
(\tilde\pi(q,v)\vf_1,\vf_2) =(\mathcal F_{\hat V}f_q)(v).
$$
The Plancherel formula for $V$ gives then
\begin{align*}
\int_G |(\tilde\pi(q,v)\vf_1,\vf_2)|^2\, \frac{d_Q(q)dv}{|q|}&=
\int_Q\Big(\int_V |(\mathcal F_{\hat V}f_q)(v)|^2\,dv\Big)\frac{d_Q(q)}{|q|}\\
&=\int_Q\Big(\int_{\hat V} |f_q(\xi)|^2\,d\xi\Big)\frac{d_Q(q)}{|q|}\\
&=\int_{Q\times Q} |\vf_1\big(q^{-1}\phi^{-1}(\xi)\big)|^2\,|\vf_2\big(\phi^{-1}(\xi)\big)|^2\,|\phi^{-1}(\xi)|^2\, \frac{d_Q(q)\,d\xi}{|q|}\\
&=\int_{Q\times Q}   |\vf_1(q^{-1}q')|^2\,|\vf_2(q')|^2\,|q^{-1}q'|\,d_Q(q)\,d_Q(q')\\
&=\int_{Q}   |\vf_1(q)|^2\,\frac{|q|}{\Delta_Q(q)}d_Q(q)\int_Q|\vf_2(q')|^2\,d_Q(q').
\end{align*}
This shows that $\tilde\pi$ is square-integrable and the Duflo--Moore operator is the operator of multiplication by the function $q\mapsto |q|/\Delta_Q(q)$.
This gives the result.
\ep

The next natural question is whether $(B(L^2(V)),\Ad\pi)$ is a $G$-Galois object, or equivalently, by Theorem~\ref{thm:genuine-rep} and Proposition~\ref{prop:G-type-I}, whether $\pi$ is quasi-equivalent to the regular representation. If it is, then we can construct a dual unitary $2$-cocycle on $G$ by the procedure described in Section~\ref{ss:dual-cocycles}. Instead of exactly following that procedure, however, we will construct this cocycle directly from the product $\star$ on $L^2(G)$ defined~by
\begin{equation}\label{eq:star0}
f_1 \star  f_2:=\Op^*(\Op(f_1)\Op(f_2))=\tilde U_\phi\big(\tilde U_\phi^*f_1  \star _0 \tilde U_\phi^*f_2\big).
\end{equation}
According to Remark~\ref{rem:two-star} this approach should not necessarily work, but if it does, it has some technical advantages.

\smallskip

Let us start by observing that by definition the algebra $(L^2(G),\star)$ is unitarily isomorphic to the algebra $\HS(L^2(V))$ of Hilbert--Schmidt operators.
Hence
\begin{equation}\label{eq:star-L2}
\|f_1\star f_2\|_2\le \|f_1\|_2\|f_2\|_2\ \ \text{for all}\ \ f_1,f_2\in L^2(G).
\end{equation}

We will need explicit formulas for the product $\star$ on
dense subspaces of  $(L^2(G),\star)$. First, we introduce an auxiliary space:
\begin{definition}
Let $\CE(G) $ be the Banach space completion of $C_c(G)$ with respect to the norm
$$
\|f\|_\CE :=\|f\|_1+\|f\|_2+\int_V\sup_{q\in Q}|f(q,v)|dv+\int_Q\sup_{v\in V}|f(q,v)|\frac{d_Q(q)}{|q|}.
$$
\end{definition}
\begin{lemma}
\label{KI}
For any $f_1,f_2\in\CE(G) $ and a.a.~$(q,v)\in G$, we have
\begin{equation}\label{eq:star}
(f_1 \star  f_2)(q,v)=
\int_G e^{i\langle q'^\flat\xi_0-\xi_0,v'\rangle}
f_1\big((q,v)(1,v')\big)\,f_2\big((q,v)(q',0)\big)\,d(q',v'),
\end{equation}
and
\begin{equation*}\label{eq:star1}
\big(\CF_V(f_1\star f_2)\big)(q,\xi)=\int_{\hat V}
(\CF_V f_1)(q,\xi')\,(\CF_V f_2)\big(\phi^{-1}(\phi(q)-\xi'),\xi-\xi'\big)\,d\xi'.
\end{equation*}
\end{lemma}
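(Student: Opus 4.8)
The plan is to unwind the three layers of definitions behind $\star$. By \eqref{eq:star0} and Lemma~\ref{Unit}, the map $\Op=\OKN\circ\tilde U_\phi^*$ is a unitary $L^2(G)\to\HS(L^2(V))$ with $\Op(f_1\star f_2)=\Op(f_1)\Op(f_2)$, so it is enough to compute this composition of Hilbert--Schmidt operators at the level of integral kernels. Combining \eqref{KO}, \eqref{eq:tilde-U-phi} and the measure identity \eqref{eq:dqdxi}, the operator $\Op(f)$ has kernel $(v,v')\mapsto g_f(v',v-v')$, where
$$
g_f(v,w):=\int_Q e^{i\langle\phi(q),w\rangle}\,f(q,v)\,\frac{d_Q(q)}{|q|}.
$$
Hence $\Op(f_1)\Op(f_2)$ has kernel $(v,v'')\mapsto\int_V g_{f_1}(v_1,v-v_1)\,g_{f_2}(v'',v_1-v'')\,dv_1$; inserting the definition of the $g_{f_i}$, splitting the phase coming from $g_{f_1}$ as $e^{i\langle\phi(q),v-v_1\rangle}=e^{i\langle\phi(q),v-v''\rangle}e^{i\langle\phi(q),v''-v_1\rangle}$ and collecting terms, this kernel becomes $\int_Q e^{i\langle\phi(q),v-v''\rangle}h(q,v'')\,d_Q(q)/|q|=g_h(v'',v-v'')$, where
$$
h(q,v):=\int_{Q\times V}e^{i\langle\phi(r)-\phi(q),v_1-v\rangle}\,f_1(q,v_1)\,f_2(r,v)\,\frac{d_Q(r)\,dv_1}{|r|}.
$$
Thus $\Op(f_1)\Op(f_2)=\Op(h)$ and, $\Op$ being unitary, $f_1\star f_2=h$.

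It remains to rewrite $h$ in the two stated forms. For \eqref{eq:star} I substitute $v_1=v+qv'$ (so $dv_1=|q|\,dv'$) and $r=qq'$ (so $d_Q(r)/|r|=d_Q(q')/(|q|\,|q'|)$, using multiplicativity of the modulus $|qq'|=|q|\,|q'|$); the homomorphism property $(qq')^\flat=q^\flat q'^\flat$ gives $\phi(r)-\phi(q)=q^\flat(q'^\flat\xi_0-\xi_0)$, and since $\langle q^\flat\eta,qw\rangle=\langle\eta,w\rangle$ the phase becomes $e^{i\langle q'^\flat\xi_0-\xi_0,\,v'\rangle}$; moreover $f_1(q,v_1)=f_1((q,v)(1,v'))$, $f_2(r,v)=f_2((q,v)(q',0))$, and the measure $d_Q(q')\,dv'/|q'|$ equals $d(q',v')$, which is exactly \eqref{eq:star}. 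For the second identity I apply the partial Fourier transform $\CF_V$ in the $v$-variable directly to the formula for $h$, separating the two $V$-integrals into $(\CF_V f_1)(q,\phi(q)-\phi(r))$ and $(\CF_V f_2)(r,\xi-\phi(q)+\phi(r))$, and then substitute $\xi'=\phi(q)-\phi(r)$, invoking \eqref{eq:dqdxi} and translation invariance of $d\xi$ to replace $d_Q(r)/|r|$ by $d\xi'$.

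All of the above is a priori formal — oscillatory integrals, Fubini, and the changes of variables between $Q$ and $\hat V$, the last being delicate since $\phi$ need not be a homeomorphism. The plan to make it rigorous is to carry out the computation verbatim for $f_1,f_2\in C_c(G)$, where $\tilde U_\phi^*f_i\in L^2(V\times\hat V)$ so that $\Op(f_i)\in\HS(L^2(V))$ and the kernel composition is legitimate, while all integrands occurring are bounded with compact essential support, so that Fubini applies and the $\hat V$-integrals run over sets of finite Haar measure, making the substitutions through \eqref{eq:dqdxi} valid; the identity $\Op(f_1\star f_2)=\Op(f_1)\Op(f_2)$ then forces $f_1\star f_2$ to equal the right-hand side of \eqref{eq:star}. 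To pass from $C_c(G)$ to all of $\CE(G)$ I use, on one hand, that $(f_1,f_2)\mapsto f_1\star f_2$ is continuous into $L^2(G)$ by \eqref{eq:star-L2} and, on the other, that for $f_i\in\CE(G)$ the right-hand side of \eqref{eq:star} converges absolutely at every $(q,v)$: bounding the phase by $1$ and carrying out the $(q',v')$-integral gives the bound
$$
\Big(\int_V|f_1(q,w)|\,dw\Big)\Big(\int_Q|f_2(q',v)|\,\frac{d_Q(q')}{|q'|}\Big)\ \le\ \|f_1\|_\CE\,\|f_2\|_\CE
$$
(the two factors of $|q|$ cancel), which is exactly why the third and fourth seminorms enter $\|\cdot\|_\CE$; bilinearity together with this bound shows that the right-hand side of \eqref{eq:star} is jointly continuous from $\CE(G)\times\CE(G)$ into $L^\infty(G)$, so that for $f_i^{(n)}\to f_i$ in $\CE(G)$ with $f_i^{(n)}\in C_c(G)$ the common value $f_1^{(n)}\star f_2^{(n)}$ converges both in $L^2(G)$ (to $f_1\star f_2$) and uniformly (to the right-hand side of \eqref{eq:star}), and the two limits coincide. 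The second identity extends in the same way, now using that $\CF_V$ is unitary on $L^2(V)$. I expect the main obstacle to be precisely this last, bookkeeping, part — organizing the Fubini and substitution steps so that they are valid and checking the compatibility of the two continuity statements used in the extension — since once the analytic framework is in place the algebraic identities \eqref{eq:dqdxi}, $|qq'|=|q|\,|q'|$ and $(qq')^\flat=q^\flat q'^\flat$ do the real work.
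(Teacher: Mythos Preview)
Your approach is correct and follows essentially the same kernel computation as the paper: both identify the integral kernel of $\Op(f)$, compose, and recognize the result as $\Op(h)$ for the function $h$ you wrote (which is exactly the paper's formula~\eqref{KJ}). The organizational difference is that the paper works directly with $f_1,f_2\in\CE(G)$, using the four seminorms in $\|\cdot\|_\CE$ to justify Fubini at each step and to bound $\|h\|_2$, whereas you compute first for $C_c(G)$ and then extend by continuity. Both routes are valid; yours avoids tracking $L^1$-bounds during the kernel composition at the cost of an extra approximation step.

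One point to tighten: your extension of the \emph{second} identity (``extends in the same way, now using that $\CF_V$ is unitary'') is not quite the right mechanism. Unitarity of $\CF_V$ gives $L^2$-convergence of the left-hand side, but the right-hand side involves pointwise values of $\CF_Vf_i$, which are not controlled by $\CE$-convergence of $f_i$. The cleaner route, which the paper takes, is: once you have the formula for $h$ established for all $f_1,f_2\in\CE(G)$ (via your extension of~\eqref{eq:star}), apply $\CF_V$ to it directly and justify the interchange of the $v$-integral with the $(r,v_1)$-integral by Fubini. The relevant bound is
\[
\int_V\!\int_{Q\times V}|f_1(q,v_1)|\,|f_2(r,v)|\,\frac{d_Q(r)\,dv_1}{|r|}\,dv
\;=\;\Big(\int_V|f_1(q,v_1)|\,dv_1\Big)\,\|f_2\|_1
\;\le\;\|f_1\|_\CE\,\|f_2\|_\CE,
\]
so for a.e.\ $q$ the integrand is in $L^1$, and since $h(q,\cdot)\in L^1(V)\cap L^2(V)$ the pointwise and $L^2$ Fourier transforms agree.
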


Here, following our conventions, $\CF_V\colon L^2(G)=L^2(Q\ltimes V)\to L^2(Q\times\hat V)$ is the partial Fourier transform in $V$-variables. Note that for this map to be unitary we have to equip $Q\times   \hat V$ with the measure $|q|^{-1}d_Q(q)\,d\xi$ (which in general is \emph{not} the Haar measure of the semidirect product $Q\ltimes   \hat V$ for the dual action).

\bp
For $f\in L^2(G)$, we let $K_{f}\in L^2(V\times V)$ be the  kernel of the Hilbert--Schmidt operator ${\rm Op}(f)$.
From \eqref{KO} and \eqref{Op} we get
$$
K_{f}(v,v')=\big((1\otimes\CF_V^*)\tilde U_\phi^* f\big)(v',v-v')\ \ \text{for}\ \ v,v'\in V.
$$
If $f_1,f_2\in\CE(G)$, then, since ${\rm Op}(f_1\star f_2)={\rm Op}(f_1){\rm Op}(f_2)$, the product formula for operator kernels gives
\begin{align*}
K_{f_1\star f_2}(v,v')&=\int_V K_{f_1}(v,w)K_{f_2}(w,v')\,dw\\
&=\int_V  \big((1\otimes\CF_V^*)\tilde U_\phi^* f_1\big)(w,v-w)
\big((1\otimes\CF_V^*)\tilde U_\phi^* f_2\big)(v',w-v')\,dw\\
&=\int_V  \Big(\int_{\hat V} e^{i\langle\xi_1, v-w\rangle}f_1(\phi^{-1}(\xi_1),w) \,d\xi_1\Big)
\Big(\int_{\hat V}e^{i\langle\xi_2,w-v'\rangle}f_2(\phi^{-1}(\xi_2),v')\,d\xi_2\Big)dw\\
 &=\int_{V\times\hat V\times\hat V}e^{i\langle\xi_1, v-w\rangle}e^{i\langle\xi_2,w-v'\rangle}f_1(\phi^{-1}(\xi_1),w)f_2(\phi^{-1}(\xi_2),v')\,
 dw\,d\xi_1\,d\xi_2,
\end{align*}
where the last step is justified by Fubini's theorem:  note that for any $(v,v')\in V\times V$ the function
$$
V\times\hat V\times\hat V\to \C,\quad
(w,\xi_1,\xi_2)\mapsto e^{i\langle\xi_1, v-w\rangle}e^{i\langle\xi_2,w-v'\rangle}f_1(\phi^{-1}(\xi_1),w)f_2(\phi^{-1}(\xi_2),v'),
$$
belongs to $L^1(V\times\hat V\times\hat V)$ and its $L^1$-norm is not greater than $\|f_1\|_1\|f_2\|_{\CE}$.

Next, still for $f_1,f_2\in \CE(G) $, we put
\begin{align}
\label{KJ}
f(q',v)&:=\int_G e^{i\langle q^\flat\xi_0-\xi_0,w\rangle}
f_1\big((q',v)(1,w)\big)\,f_2\big((q',v)(q,0)\big)\,d(q,w)\nonumber\\
&=\int_G e^{i\langle q^\flat\xi_0-\xi_0,w\rangle}
f_1(q',q'w+v)\,f_2(q'q,v)\,d(q,w)\nonumber\\
&=\int_G e^{i\langle q^\flat\xi_0-q'^\flat\xi_0,w-v\rangle}
f_1(q',w)\,f_2(q,v)\,d(q,w),
\end{align}
with absolutely converging integrals. It is easy to see that $f\in L^2(G)$ and
$$
\|f\|_2^2\leq\|f_1\|_1\|f_2\|_1\int_V\sup_{q\in Q}|f_1(q,v)|dv\int_Q\sup_{v\in V}|f_2(q,v)|\frac{d_Q(q)}{|q|}\leq \|f_1\|_\CE ^2 \|f_2\|_\CE^2.
$$
 In particular, we can compute $K_f$, the kernel of the operator ${\rm Op}(f)$:
\begin{align*}
K_f(v,v')&=\big((1\otimes\CF_V^*)\tilde U_\phi^* f\big)(v',v-v')
=\int_{\hat V}e^{i\langle\xi,v-v'\rangle }\,f(\phi^{-1}(\xi),v')\,d\xi\\
&=\int_{\hat V}e^{i\langle\xi,v-v'\rangle }\Big(\int_G e^{i\langle q^\flat\xi_0-\xi,w-v'\rangle}
f_1(\phi^{-1}(\xi),w)\,f_2(q,v')\,d(q,w)\Big)\,d\xi,
\end{align*}
which by Fubini (and a simplification of the phases) becomes
\begin{align*}
K_f(v,v')&=\int_{\hat V\times G}e^{i\langle\xi,v-w\rangle }e^{i\langle q^\flat\xi_0,w-v'\rangle}
f_1(\phi^{-1}(\xi),w)\,f_2(q,v')\,d(q,w)\,d\xi\\
&=\int_{\hat V\times V\times\hat V}e^{i\langle\xi,v-w\rangle }e^{i\langle \xi_2,w-v'\rangle}
f_1(\phi^{-1}(\xi),w)\,f_2(\phi^{-1}(\xi_2),v')\,d\xi\, dw\,d\xi_2.
\end{align*}
Hence $K_{f_1\star f_2}=K_f$, and~\eqref{eq:star} follows by  injectivity of the map $L^2(G)\to L^2(V\times V)$, $f\mapsto K_f$.

\smallskip

To get the second formula in the formulation of the lemma, we apply the partial Fourier transform to~\eqref{KJ}
and using Fubini's theorem one more time obtain
\begin{align*}
\big(\CF_V(f_1\star f_2)\big)(q',\xi)&=\int_G e^{i\langle \phi(q)-\phi(q'),w\rangle}
f_1(q',w)\,(\CF_V f_2)(q,\xi+ \phi(q)-\phi(q'))\,d(q,w)\nonumber\\
&=\int_Q
(\CF_V f_1)(q',-\phi(q)+\phi(q'))\,(\CF_V f_2)(q,\xi+ \phi(q)-\phi(q'))\,\frac{d_Q(q)}{|q|}\nonumber\\
&=\int_{\hat V}
(\CF_V f_1)(q',\xi')\,(\CF_V f_2)\big(\phi^{-1}(\phi(q')-\xi'),\xi-\xi'\big)\,d\xi'.
\end{align*}
This concludes the proof. \ep

\begin{corollary}
\label{Cc}
  $C_c(G)$ is a subalgebra of  $(L^2(G),\star)$.
  \end{corollary}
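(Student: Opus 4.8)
The plan is to read the statement off directly from the product formula~\eqref{eq:star} established in Lemma~\ref{KI}. The first step is to observe that $C_c(G)\subset\CE(G)$: for $f\in C_c(G)$ all four contributions to $\|f\|_\CE$ are finite, the two supremum integrals being integrals over $V$ and over $Q$ of bounded functions with compact support. Consequently, for $f_1,f_2\in C_c(G)$ the product $f_1\star f_2$ is given almost everywhere by
$$
(f_1\star f_2)(q,v)=\int_G e^{i\langle q'^\flat\xi_0-\xi_0,v'\rangle}\,f_1(q,v+qv')\,f_2(qq',v)\,d(q',v').
$$

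Next I would check compact support. Setting $S_j=\operatorname{supp}f_j$, the integrand vanishes unless simultaneously $(q,v+qv')\in S_1$ and $(qq',v)\in S_2$; the former forces $q$ into the compact projection of $S_1$ onto $Q$, the latter forces $v$ into the compact projection of $S_2$ onto $V$, so $f_1\star f_2$ is supported in a compact subset of $G$. For continuity, fix any compact $C\subset G$; the same bookkeeping shows that for $(q,v)\in C$ the variables $(q',v')$ contributing to the integral lie in a fixed compact set $L\subset G$, since $q'=q^{-1}(qq')$ with $qq'$ in the projection of $S_2$ onto $Q$, and $v'=q^{-1}(qv')$ with $qv'=(v+qv')-v$ in the difference of the projections of $S_1$ and $S_2$ onto $V$, while $q$ itself stays in a compact set. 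Since the integrand is jointly continuous --- the phase is continuous because the dual action $q\mapsto q^\flat$ and the pairing are --- and is dominated on $C$ by $\|f_1\|_\infty\|f_2\|_\infty$ times the indicator function of $L$, which is $d(q',v')$-integrable, dominated convergence yields continuity of $f_1\star f_2$ on $C$, hence on $G$.

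Finally, associativity comes for free: $(L^2(G),\star)$ is isomorphic via $\Op$ to the algebra $\HS(L^2(V))$, so $\star$ is associative on all of $L^2(G)$, and since $C_c(G)\subset L^2(G)$ is closed under $\star$ it is a subalgebra. The only point requiring a little care is the uniform-in-$(q,v)$ choice of the compact set $L$, which is an immediate consequence of the explicit form of the group law $(q,v)(q',v')=(qq',v+qv')$; I do not anticipate any real obstacle here.
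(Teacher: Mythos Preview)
Your proof is correct and follows essentially the same approach as the paper: both note that $C_c(G)\subset\CE(G)$ and then read off compact support from the explicit integral formula~\eqref{eq:star}/\eqref{KJ} for $\star$. The paper is simply terser, saying the conclusion follows ``clearly'' from~\eqref{KJ}, whereas you spell out the support bookkeeping and the dominated-convergence argument for continuity.
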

  \bp
Since $C_c(G)\subset \CE(G) $,  the result follows from formula \eqref{KJ} which clearly entails that when $f_1,f_2\in C_c(G)$, then also $f_1\star f_2\in C_c(G)$.
\ep

Next we consider a space of functions with good behavior in the partial Fourier space:

\begin{definition}
For a measure space $(X,\mu)$, we let $\CL(X,\mu)$ be the subspace of $L^\infty(X,\mu)$ consisting of functions that are (essentially) zero outside a set of finite measure.
We then let $\CF \CL(G)$ be the subspace of $L^2(G)$  consisting of functions of the form $\CF_V^*f$ with $f\in \CL(Q\times\hat V,|q|^{-1}d_Q(q)\,d\xi)$.
\end{definition}

\begin{lemma}
\label{KI2}
$\CF \CL(G)$ is a subalgebra of  $(L^2(G),\star)$.
Moreover, for any $f_1,f_2\in\CF\CL(G)$ and a.a.~$(q,\xi)\in Q\times\hat V$, we have
\begin{equation}\label{eq:star2}
\big(\CF_V(f_1\star f_2)\big)(q,\xi)=\int_{\hat V}
(\CF_V f_1)(q,\xi')\,(\CF_V f_2)\big(\phi^{-1}(\phi(q)-\xi'),\xi-\xi'\big)\,d\xi'.
\end{equation}
\end{lemma}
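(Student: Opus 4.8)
The plan is to deduce both assertions from Lemma~\ref{KI} by an approximation argument, using the $L^2$-boundedness of $\star$ recorded in~\eqref{eq:star-L2}. Write $g_i:=\CF_Vf_i\in\CL(Q\times\hat V)$, with representatives bounded by $M_i$ and supported in sets $S_i$ of finite measure for $|q|^{-1}d_Q(q)\,d\xi$. First I would check that the right-hand side of~\eqref{eq:star2} is absolutely convergent for a.a.~$(q,\xi)$ and lies in $\CL(Q\times\hat V)$: for a.e.\ $q$ the section $g_1(q,\cdot)$ is supported in a set of finite Lebesgue measure, hence lies in $L^1(\hat V)$, and since $|g_2|\le M_2$ the integrand is dominated by $M_2|g_1(q,\cdot)|$, so the integral converges and is bounded by $M_2\|g_1(q,\cdot)\|_1$; after the substitution $\xi'=\phi(q)-\phi(q')$ (using~\eqref{eq:dqdxi}) one sees, exactly as in~\eqref{KJ}, that for each such $q$ the $\xi$-support of the resulting function is contained in a set of measure at most $\mu(S_2)$, and a further application of Fubini controls its total support and its essential bound. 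Denote the resulting function by $B(g_1,g_2)$, so that the content of the lemma is the single identity $\CF_V(f_1\star f_2)=B(g_1,g_2)$, which gives both~\eqref{eq:star2} and $f_1\star f_2=\CF_V^*B(g_1,g_2)\in\CF\CL(G)$.

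To prove this identity I would approximate. Given $g\in\CL(Q\times\hat V)$ one can choose $g_n$ in the linear span of $\{a\otimes b:a\in C_c(Q),\ b\in\CS(\hat V)\}$ with $g_n\to g$ in $L^1\cap L^2$, with $\sup_n\|g_n\|_\infty<\infty$, and with the supports of all $g_n$ contained in a fixed set of finite measure; for such $g_n$ one has $\CF_V^*g_n\in C_c(Q)\otimes\CS(V)\subset\CE(G)$, all four seminorms defining $\|\cdot\|_\CE$ being manifestly finite on such functions. Applying this to $g_1$ and $g_2$ we obtain approximants $g_i^{(n)}$. On the one hand, $\CF_V^*g_1^{(n)}\star\CF_V^*g_2^{(n)}\to f_1\star f_2$ in $L^2(G)$ by~\eqref{eq:star-L2} and bilinearity, so by the second formula of Lemma~\ref{KI} we get $B(g_1^{(n)},g_2^{(n)})=\CF_V(\CF_V^*g_1^{(n)}\star\CF_V^*g_2^{(n)})\to\CF_V(f_1\star f_2)$ in $L^2$. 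On the other hand, the uniform bounds on $\|g_i^{(n)}\|_\infty$ and on their supports supply a common dominating function for the integrands in~\eqref{eq:star2}, so after passing to a subsequence along which $g_i^{(n)}\to g_i$ a.e.\ and in $L^1$ on almost every section, dominated convergence gives $B(g_1^{(n)},g_2^{(n)})\to B(g_1,g_2)$ almost everywhere. Comparing the two limits yields $\CF_V(f_1\star f_2)=B(g_1,g_2)$, as desired.

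I expect the main obstacle to be the construction of the approximating sequences with all the required properties simultaneously --- landing inside $\CE(G)$ so that Lemma~\ref{KI} applies, while retaining uniform $L^\infty$ and support control and convergence in $L^1$ as well as in $L^2$ --- together with the bookkeeping needed to push the limit through the explicit integral in~\eqref{eq:star2} (the a.e.\ convergence of the $\xi'$-integrals, and the fact that the relevant change of variables pulls back null sets to null sets) and to make the Fubini estimates of the first paragraph yield $B(g_1,g_2)\in\CL(Q\times\hat V)$. The purely algebraic steps --- unwinding~\eqref{eq:dqdxi}, carrying out the substitution $\xi'=\phi(q)-\phi(q')$, and matching the two displayed formulas --- are routine once Lemma~\ref{KI} is available.
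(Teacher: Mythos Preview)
Your strategy is the same as the paper's: identity~\eqref{eq:star2} already holds on $\CE(G)\cap\CF\CL(G)$ by Lemma~\ref{KI}, and you pass to general $f_i\in\CF\CL(G)$ by approximating in such a way that the left-hand side converges in $L^2$ via~\eqref{eq:star-L2} while the right-hand side converges a.e.\ by dominated convergence. The paper packages this via a tailored convergence notion (``$h_n\xrightarrow{\tau}f$'': $L^2$-convergence, a.e.\ convergence of the $\CF_V h_n$, and domination by a fixed $L^1\cap L^\infty$ function), but the mechanism is identical to yours.

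The one point where the paper is sharper is exactly the obstacle you flag: producing approximants that simultaneously lie in $\CE(G)$ and carry the uniform support/$L^\infty$ control. Your choice of $\operatorname{span}\{C_c(Q)\otimes\CS(\hat V)\}$ is in tension with ``supports contained in a fixed set of finite measure'', since Schwartz functions on a general locally compact abelian group $\hat V$ need not have compact support; and if you retreat to $C_c(\hat V)$ instead, $\CF_V^*b$ need not be in $L^1(V)$, so you lose membership in $\CE(G)$. The paper resolves this with a short trick: after reducing (via cutoffs and Lusin) to $\CF_Vf\in C_c(Q)\otimes C_c(\hat V)$, it writes the $\hat V$-factor as a convolution $h_1*h_2$ with $h_i\in C_c(\hat V)$, so that $\CF_V^*(h_1*h_2)=(\CF_V^*h_1)(\CF_V^*h_2)$ is a product of two $L^2\cap C_0$ functions, hence in $L^1\cap L^\infty$, which is precisely what $\|\cdot\|_\CE$ needs. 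This avoids any appeal to a Schwartz space on~$V$.
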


\bp
Let $K_j\subset Q$ and $\hat K_j\subset \hat V$ ($j=1,2$) be Borel sets of finite measure such that $f_j$ is
(essentially) zero outside $K_j\times \hat K_j$.
Then the function defined by the right hand side of~\eqref{eq:star2} is zero for $(q,\xi)$ outside $K_1\times (\hat K_1+\hat K_2)$. Therefore if~\eqref{eq:star2} holds, then $f_1\star f_2\in\CF\CL(G)$.

\smallskip

Turning to the proof of~\eqref{eq:star2}, we already know from Lemma~\ref{KI} that this identity holds for $f_j\in\CE(G)\cap\CF\CL(G)$. Let us write $h_n\xrightarrow[n]{\tau}f$ if $h_n\to f$ in the $L^2$-norm, $\CF_Vh_n\to\CF_Vf$ a.e., and the sequence $(\CF_Vh_n)_n$ is dominated by a function in $L^1(Q\times\hat V)\cap L^\infty(Q\times\hat V)$. Assume that we can find functions $h_{j,n}\in\CF\CL(G)$ such that $h_{j,n}\xrightarrow[n]{\tau}f_j$ and
\begin{equation}\label{eq:star2a}
\big(\CF_V(h_{1,n}\star h_{2,n})\big)(q,\xi)=\int_{\hat V}
(\CF_V h_{1,n})(q,\xi')\,(\CF_V h_{2,n})\big(\phi^{-1}(\phi(q)-\xi'),\xi-\xi'\big)\,d\xi'
\end{equation}
for almost all $(q,\xi)$. By~\eqref{eq:star-L2} we have $h_{1,n}\star h_{2,n}\to f_1\star f_2$ in $L^2(G)$ as $n\to\infty$, hence the left hand side of~\eqref{eq:star2a} (considered as a function in $(q,\xi)$) converges to $\CF_V(f_1\star f_2)$ in the $L^2$-norm. On the other hand, the right hand side converges to the right hand side of~\eqref{eq:star2} by the dominated convergence theorem. Therefore to finish the proof it suffices to show that for every $f\in\CF\CL(G)$ there exists a sequence of functions $h_n\in\CE(G)\cap\CF\CL(G)$ such that $h_n\xrightarrow[n]{\tau}f$.

First for all, if $(K_n)_n$ is an increasing sequence of compact subsets of $Q\times\hat V$ with union $Q\times\hat V$, then $\CF_V^*(f\un_{K_n})\in\CF\CL(G)$ and $\CF_V^*(f\un_{K_n})\xrightarrow[n]{\tau}f$. Therefore it suffices to consider $f\in\CF\CL(G)$ such that $\CF_Vf$ is compactly supported.

Let $K$ be any compact such that its interior contains the support of $\CF_Vf$. By Lusin's theorem, we can find a uniformly bounded sequence of continuous functions $g_n$ supported in $K$ such that $g_n\to\CF_Vf$ a.e. Then $\CF_V^*(g_n)\in\CF\CL(G)$ and $\CF_V^*(g_n)\xrightarrow[n]{\tau}f$. Therefore it suffices to consider $f$ such that $\CF_Vf\in C_c(Q\times\hat V)$.

In a similar fashion, by approximating functions in $C_c(Q\times\hat V)$ by elements of the algebraic tensor product $C_c(Q)\otimes C_c(\hat V)$, we may assume that $f=\CF_V^*(g\otimes h)=g\otimes\CF_V^*h$ for some $g\in C_c(Q)$ and $h\in C_c(\hat V)$. Finally, by approximating $h$ by the convolution of two functions, we may assume that $f=g\otimes\CF_V^*(h_1*h_2)$ for $g\in C_c(Q)$ and $h_i\in C_c(\hat V)$. But such a function is already in $\CE(G)$.
\ep

It follows from~\eqref{eq:star} that a candidate for the dual cocycle on $G$ defining the product $\star$ by formula~\eqref{eq:star-Omega} is given by
\begin{align}
\label{2CC}
\Omega&:=
\int_G e^{-i\langle q^\flat\xi_0-\xi_0,v\rangle}
\lambda_{(1,v)^{-1}}\otimes\lambda_{(q,0)^{-1}}\,d(q,v).
\end{align}
For the moment this is just a formal expression, but it at least makes sense as a sesquilinear form~$\tilde\Omega$ on $C_c(G\times G)$:
\begin{align*}
\tilde\Omega( \vf_1, \vf_2):=\int_{G} e^{-i\langle q^\flat\xi_0-\xi_0,v\rangle}
\big((\lambda_{(1,v)^{-1}}\otimes\lambda_{(q,0)^{-1}})\vf_1,\vf_2\big)\,d(q,v)\ \ \text{for}\ \ \vf_1,\vf_2\in C_c(G\times G).
\end{align*}

Our first goal is to prove that $\Omega$ makes sense as a unitary operator
on $L^2(G\times G)$. This will be proven by showing that $\Omega$ factorizes as a product of three unitaries.

Consider the following almost everywhere defined measurable transformation:
\begin{align*}
\Xi:Q\times  \hat V \times G\to Q\times  \hat V \times G,\quad
(q,\xi,g)
 \mapsto
\big(q,\xi,\phi^{-1}(\xi_0+\xi)g\big).
\end{align*}
The operator $U_\Xi$ on $L^2\big(Q\times   \hat V\times G,|q|^{-1}d_Q(q)\,d\xi\,dg \big)$ mapping $f$ into $f\circ\Xi$
is unitary. We then have:

\begin{lemma}
\label{lem2}
The convolution operator $\Omega$ factorizes as follows:
$$
\Omega=(\CF_ {V}^*\otimes 1) \,U_\Xi\,(\CF_V\otimes 1).
$$
\end{lemma}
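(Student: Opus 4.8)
The plan is to verify the factorization by comparing the associated sesquilinear forms on $C_c(G\times G)$. Set $R:=(\CF_V^*\otimes1)\,U_\Xi\,(\CF_V\otimes1)$. This operator is manifestly unitary on $L^2(G\times G)$: by partial Plancherel in the $V$-variable of the first leg, $\CF_V\otimes1$ is a unitary from $L^2(G\times G)=L^2(G)\otimes L^2(G)$ onto $L^2\big(Q\times\hat V\times G,|q|^{-1}d_Q(q)\,d\xi\,dg\big)$ (the weight $|q|^{-1}$ being precisely the one appearing in the Haar measure of $G=Q\ltimes V$, cf.\ the remark following Lemma~\ref{KI}), and $U_\Xi$ is unitary on that space by construction. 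It therefore suffices to prove that $(R\vf_1,\vf_2)=\tilde\Omega(\vf_1,\vf_2)$ for all $\vf_1,\vf_2\in C_c(G\times G)$, which then identifies the bounded operator $R$ with $\Omega$.

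For the main computation I would write the two arguments on $G\times G$ as $h_i=(q_i,v_i)\in Q\ltimes V$ and use that $\lambda_{(1,v)^{-1}}$ and $\lambda_{(q,0)^{-1}}$ act by $\vf\mapsto\vf\big((1,v)\,\cdot\,\big)$ and $\vf\mapsto\vf\big((q,0)\,\cdot\,\big)$. Expanding $\tilde\Omega(\vf_1,\vf_2)$ from~\eqref{2CC} produces a multiple integral over $(q,v)\in G$ and $(h_1,h_2)\in G\times G$ carrying the phase $e^{-i\langle q^\flat\xi_0-\xi_0,v\rangle}$; since $\vf_1,\vf_2$ are compactly supported, the support conditions $(h_1,h_2)\in\operatorname{supp}(\vf_2)$ and $\big((1,v)h_1,(q,0)h_2\big)\in\operatorname{supp}(\vf_1)$ confine $(q,v)$ to a compact set, so the integrand is absolutely integrable and Fubini applies throughout. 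Substituting $v'=v+v_1$ in the first leg splits the phase as $e^{-i\langle q^\flat\xi_0-\xi_0,\,v'\rangle}e^{i\langle q^\flat\xi_0-\xi_0,\,v_1\rangle}$, and the integrals against $\vf_1$ in $v'$ and against $\overline{\vf_2}$ in $v_1$ are recognized as the values of $\CF_V\vf_1$ and $\overline{\CF_V\vf_2}$ at the dual point $\phi(q)-\xi_0$. Writing $\psi_j:=(\CF_V\otimes1)\vf_j$, this rewrites $\tilde\Omega(\vf_1,\vf_2)$ as the integral over $Q\times Q\times G$ of $\psi_1\big(q_1,\phi(q)-\xi_0;(q,0)h_2\big)\,\overline{\psi_2\big(q_1,\phi(q)-\xi_0;h_2\big)}$ against $|q_1|^{-1}d_Q(q_1)\,|q|^{-1}d_Q(q)\,dh_2$. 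Finally I would change variables $\xi=\phi(q)-\xi_0$, i.e.\ $q=\phi^{-1}(\xi_0+\xi)$: by Assumption~\ref{Fro} this is a measure class isomorphism $Q\to\hat V$, and by the normalization~\eqref{eq:dqdxi} together with translation invariance of $d\xi$ one has $|q|^{-1}d_Q(q)=d\xi$; since $(q,0)h_2=\phi^{-1}(\xi_0+\xi)h_2$, the integral becomes exactly $(U_\Xi\psi_1,\psi_2)$ computed in $L^2\big(Q\times\hat V\times G,|q|^{-1}d_Q(q)\,d\xi\,dg\big)$, hence $(R\vf_1,\vf_2)$ by unitarity of $\CF_V\otimes1$.

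No step here is genuinely deep — the argument is a chain of substitutions. The part demanding the most care is the bookkeeping: tracking the three Hilbert spaces and their measures while transporting the phase $e^{-i\langle q^\flat\xi_0-\xi_0,v\rangle}$ through the partial Fourier transform and the variable change $q\mapsto\phi(q)-\xi_0$, and in particular checking that the $|q|^{-1}$-weight in the Haar measure of $G$ is exactly what makes $\CF_V\otimes1$ unitary onto the stated space and what the pushforward identity~\eqref{eq:dqdxi} needs in order to turn $|q|^{-1}d_Q(q)$ into $d\xi$. The only analytic input is the compact-support observation that makes Fubini legitimate.
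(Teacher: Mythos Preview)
Your proposal is correct and follows essentially the same route as the paper's proof: both compute the sesquilinear form $\tilde\Omega(\vf_1,\vf_2)$ on $C_c(G\times G)$, justify Fubini by compact support, shift the $V$-variable in the first leg to produce the partial Fourier transforms $(\CF_V\otimes1)\vf_j$ evaluated at $q^\flat\xi_0-\xi_0$, and then change variable $q\mapsto\xi=\phi(q)-\xi_0$ via~\eqref{eq:dqdxi} to recognize the action of $U_\Xi$. The only cosmetic difference is that the paper writes out the intermediate integrals line by line while you describe the steps in prose.
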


\begin{proof}
For  $\vf_1,\vf_2\in C_c(G\times G)$, the function
$$
(q,v;g_1,g_2)\mapsto
\vf_1\big((1,v)g_1;(q,0)g_2 \big)\overline{\vf_2\big(g_1,g_2\big)} ,
$$
belongs to $L^1(G^3)$. Hence, we may use  Fubini's theorem to write $\tilde\Omega( \vf_1, \vf_2)$
as follows:
\begin{align*}
& \int_{G^3} e^{-i\langle q^\flat\xi_0-\xi_0,v\rangle}
\vf_1\big((1,v)(q_1,v_1);(q,0)(q_2,v_2) \big)\,\overline{\vf_2\big(q_1,v_1;q_2,v_2\big)}\,d(q,v)\,d(q_1,v_1)\,d(q_2,v_2) \\
&= \int_{G^3} e^{-i\langle q^\flat\xi_0-\xi_0,v\rangle}
\vf_1\big(q_1,v+v_1;qq_2,qv_2\big)\overline{\vf_2\big(q_1,v_1;q_2,v_2\big)} \,d(q,v)\,d(q_1,v_1)\,d(q_2,v_2)\\
&= \int_{G^3} e^{-i\langle q^\flat\xi_0-\xi_0,v-v_1\rangle}
\vf_1\big(q_1,v;qq_2,qv_2\big)\overline{\vf_2\big(q_1,v_1;q_2,v_2\big)} \,d(q,v)\,d(q_1,v_1)\,d(q_2,v_2)\\
&= \int_{Q^3\times V}
 \,\big((\CF_V\otimes 1)\vf_1\big)\big(q_1,q^\flat\xi_0-\xi_0;qq_2,qv_2\big) \overline{\big((\CF_V\otimes 1)\vf_2\big)\big(q_1,q^\flat\xi_0-\xi_0;q_2,v_2\big)}\\
&\hspace{10cm}\times\frac{d_Q(q_1)}{|  q_1|}\frac{d_Q(q_2)\,dv_2}{|q_2|}\frac{d_Q(q)}{|q|}\\
&=\int_{Q\times\hat V\times Q\times V}
\big((\CF_V\otimes 1)\vf_1\big)\big(q_1,\xi;\phi^{-1}(\xi_0+\xi)q_2,\phi^{-1}(\xi_0+\xi)v\big)
\overline{\big((\CF_V\otimes 1)\vf_2\big)\big(q_1,\xi;q_2,v\big)}\\
&\hspace{10cm}\times
\frac{d_Q(q_1)\,d\xi}{|  q_1|}\,\frac{d_Q(q_2)\,dv}{|  q_2|},
\end{align*}
which completes the proof.
\end{proof}

Next, by the definition of $\Omega$ it is clear that $\Omega$ commutes with the operators $\rho_g\otimes1$ and $1\otimes\rho_g$. Hence $\Omega\in W^*(G)\bar\otimes W^*(G)$.

\begin{lemma}
For all $f_1,f_2\in A(G)\cap L^2(G)$, we have
$$
f_1\star f_2=(f_1\otimes f_2)(\Dhat(\cdot)\Omega^*).
$$
\end{lemma}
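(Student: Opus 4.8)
The content of the lemma is that the Kohn--Nirenberg product $\star$ on $L^2(G)$ coincides, on the subspace $A(G)\cap L^2(G)$, with the product $f_1\star_\Omega f_2:=(f_1\otimes f_2)\big(\Dhat(\cdot)\Omega^*\big)$ attached to $\Omega$ by~\eqref{eq:star-Omega}; equivalently, that the formal expression~\eqref{2CC} genuinely implements $\star$. Note that the right-hand side already makes sense for $f_1,f_2\in A(G)$, since $\Omega^*\in W^*(G)\bar\otimes W^*(G)$ is a bounded operator; the hypothesis $f_i\in L^2(G)$ is only there to make the left-hand side $f_1\star f_2$ meaningful. The plan is to evaluate the right-hand side explicitly and recognise it as the product formula~\eqref{eq:star} of Lemma~\ref{KI}.

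The formal evaluation is immediate: from~\eqref{2CC} one has $\Omega^*=\int_G e^{i\langle q^\flat\xi_0-\xi_0,v\rangle}\,\lambda_{(1,v)}\otimes\lambda_{(q,0)}\,d(q,v)$, so, using $\Dhat(\lambda_g)=\lambda_g\otimes\lambda_g$ together with $\lambda_g\lambda_{(1,v)}=\lambda_{g(1,v)}$ and $\lambda_g\lambda_{(q,0)}=\lambda_{g(q,0)}$,
$$
(f_1\otimes f_2)\big(\Dhat(\lambda_g)\Omega^*\big)=\int_G e^{i\langle q^\flat\xi_0-\xi_0,v\rangle}\,f_1\big(g(1,v)\big)\,f_2\big(g(q,0)\big)\,d(q,v),
$$
and writing $g=(q',v')$ and renaming $(q,v)$ as the integration variable this is exactly the right-hand side of~\eqref{eq:star}, i.e.\ $(f_1\star f_2)(g)$. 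To turn this into a proof I would first work on the subspace $\mathcal A_0\subset A(G)\cap L^2(G)$ spanned by the matrix coefficients $f_{\psi,\chi}(g):=(\lambda_g\psi,\chi)$ with $\psi,\chi\in C_c(G)$: each such function is continuous, bounded and compactly supported, hence lies in $\CE(G)$, and --- realising $f_i$ as a matrix coefficient and reading $\Omega^*$ off the sesquilinear form defining $\Omega$ --- the integral above then converges absolutely for every $g$ by exactly the $\CE$-norm estimates in the proof of Lemma~\ref{KI}, so Fubini applies and every manipulation is legitimate. Hence the identity of the lemma holds on $\mathcal A_0$.

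Finally I would extend to all of $A(G)\cap L^2(G)$ by continuity and density. The map $f_1\mapsto f_1\star f_2$ is continuous in the $L^2$-norm by~\eqref{eq:star-L2}, while $f_1\mapsto\big[g\mapsto(f_1\otimes f_2)(\Dhat(\lambda_g)\Omega^*)\big]$ is continuous from $A(G)$ into $C_b(G)$ since $\Omega$ is unitary; a uniform limit and an $L^2$-limit of one and the same sequence coincide almost everywhere, so it is enough to approximate $f_1$ (and then $f_2$) by elements of $\mathcal A_0$ simultaneously in the $A(G)$- and the $L^2$-norm. The space $\mathcal A_0$ is dense in $A(G)$ (every element of $A(G)$ is a coefficient $f_{\psi,\chi}$ with $\psi,\chi\in L^2(G)$, and $C_c(G)$ is $L^2$-dense) and dense in $L^2(G)$ (take $\chi$ through an approximate identity in $C_c(G)$), but the genuine difficulty --- and the step where I expect the main work to lie --- is that for nonunimodular $G$ these two topologies are a priori incomparable on $A(G)\cap L^2(G)$, so simultaneous approximation is not formal; I would establish it by making $A(G)$ and $L^2(G)$ concrete through the isomorphism $W^*(G)\cong B(L^2(Q))$ and the partial Fourier transform $\CF_V$, and approximating by finite-rank data. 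By comparison, the bookkeeping in the first computation --- the honest replacement of the bounded operator $\Omega^*$ by its defining integral --- is routine, and is precisely what forces the statement to be phrased over the test class $A(G)\cap L^2(G)$ rather than over all of $A(G)$.
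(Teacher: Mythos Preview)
Your approach is essentially the same as the paper's: verify the identity on the test class of matrix coefficients $f=\vf_1*\check\vf_2$ with $\vf_i\in C_c(G)$ (your $\mathcal A_0$), using the sesquilinear form defining $\Omega$ together with the explicit $\star$-formula of Lemma~\ref{KI}, and then extend by approximating in both the $A(G)$ and $L^2(G)$ norms simultaneously. You have correctly identified simultaneous approximation as the only nontrivial step.

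Where you diverge is in how you propose to handle that step. You suggest invoking the concrete isomorphism $W^*(G)\cong B(L^2(Q))$ and the partial Fourier transform --- in other words, the special structure of $G=Q\ltimes V$. The paper instead gives a short, completely general argument using approximate identities: given $f\in A(G)\cap L^2(G)$, first take a standard approximate unit $(\vf_n)_n\subset C_c(G)$ in $L^1(G)$ and observe that $f*\check\vf_n\to f$ in $L^2(G)$, while writing $f=f_1*\check f_2$ with $f_i\in L^2(G)$ and using $f*\check\vf_n=f_1*(\vf_n*f_2)\check{}$ gives convergence in $A(G)$ as well; this reduces to the case $f=f_1*\check f_2$ with $f_2\in C_c(G)$, where approximating $f_1$ in $L^2$ by $C_c$ functions finishes the job. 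This is both simpler and more robust than what you outline, and it would be worth replacing your proposed route with it.
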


\bp
Recall that $A(G)$ consists of functions of the form $\vf_1*\check \vf_2$, with $\vf_i\in L^2(G)$, which correspond to the linear functionals
$(\cdot\,\vf_2,\bar\vf_1)$  on $W^*(G)$. Under the identification $A(G)\simeq W^*(G)_*$,  Lemma \ref{KI} says that if  $f_1,f_2\in \CE(G) \cap A(G)$, then
$$
(f_1\star f_2)(g)=
 \int_G e^{i\langle q'^\flat\xi_0-\xi_0,v'\rangle}
(f_1\otimes f_2)\big(\lambda_{g(1,v')}\otimes\lambda_{g(q',0)}\big)\,d(q',v').
$$
 Now, for $f_j=\vf_j\ast\check\vf_j'$, with $\vf_j,\vf_j'\in C_c(G)$, we have
$$
 (f_1\otimes f_2)(\Dhat(g)\Omega^*)=(\Dhat(g)\Omega^*(\vf_1'\otimes\vf_2'),\bar\vf_1\otimes\bar\vf_2).
$$
Using the initial definition of $\Omega$ as a bilinear form on $C_c(G\times G)$, we get
\begin{align*}
 (f_1\otimes f_2)(\Dhat(g)\Omega^*)&=  \int_G e^{i\langle q'^\flat\xi_0-\xi_0,v'\rangle}
\big((\lambda_{g(1,v')}\otimes\lambda_{g(q',0)})(\vf_1'\otimes\vf_2'),\bar\vf_1\otimes\bar\vf_2\big)
\,d(q',v')\\
&=
 \int_G e^{i\langle q'^\flat\xi_0-\xi_0,v'\rangle}
(f_1\otimes f_2)\big(\lambda_{g(1,v')}\otimes\lambda_{g(q',0)}\big)\,d(q',v').
\end{align*}
Hence the equality in the formulation of the lemma holds for all $f_1,f_2$ of the form $\vf_1*\check \vf_2$, with $\vf_i\in C_c(G)$.
Therefore in order to prove the lemma it suffices to show that every function $f\in A(G)\cap L^2(G)$ can be approximated  by functions of the form
$\vf_1*\check \vf_2$, with $\vf_i\in C_c(G)$, simultaneously in the norms on $A(G)$ and $L^2(G)$.

Consider first a function of the form $f=f_1*\check f_2$, with $f_1\in L^2(G)$ and $f_2\in C_c(G)$. If $\varphi_n\to f_1$ in $L^2(G)$, $\vf_n\in C_c(G)$, then $\vf_n*\check f_2\to f_1*\check f_2$ both in $A(G)$ and $L^2(G)$. Consider now an arbitrary $f\in A(G)\cap L^2(G)$. By the previous case in order to finish the proof it suffices to show that $f$ can be approximated simultaneously in $A(G)$ and $L^2(G)$ by functions of the form $f*\check\vf$, with $\vf\in C_c(G)$. Take a standard approximate unit $(\vf_n)_n$ in $L^1(G)$ consisting of functions $\vf_n\in C_c(G)$ such that $\vf_n\ge0$, $\int_G\vf_n\,dg=1$, with the supports of $\vf_n$ eventually contained in arbitrarily small neighbourhoods of the unit. Then $f*\check\vf_n\to f$ in $L^2(G)$. At the same time, if we write $f$ as $f_1*\check f_2$ for some $f_i\in L^2(G)$ and use that
$$
f*\check\vf_n=f_1*(\vf_n*f_2)\check{}
$$
and $\vf_n*f_2\to f_2$ in $L^2(G)$, we also see that $f*\check\vf_n\to f$ in $A(G)$.
\ep

We thus see that $A(G)\cap L^2(G)$ is a subalgebra of $(L^2(G),\star)$. Since $A(G)\cap L^2(G)$ is dense in~$A(G)$, the associativity of the product $\star$ on this subalgebra implies that $\Omega$ satisfies the cocycle identity~\eqref{eq:dual-cocycle}.

To summarize, we have proved the following result.

\begin{theorem}\label{thm:KN1}
For any second countable locally compact group $G=Q\ltimes V$ satisfying the dual orbit Assumption~\ref{Fro},
formula~\eqref{2CC} defines a dual unitary $2$-cocycle $\Omega$ on $G$. The corresponding product $\star_\Omega$ on $A(G)$ coincides on $A(G)\cap L^2(G)$ with the product~$\star$ defined by~\eqref{eq:star0}.
\end{theorem}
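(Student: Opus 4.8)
The plan is simply to assemble what has already been proved; at this point the analytic work is done and only the bookkeeping remains. Four things are asserted --- unitarity of $\Omega$, membership in $W^*(G)\bar\otimes W^*(G)$, the cocycle identity~\eqref{eq:dual-cocycle}, and the coincidence of $\star_\Omega$ with $\star$ on $A(G)\cap L^2(G)$ --- and I would treat them in turn. \emph{Unitarity} is Lemma~\ref{lem2}: the a priori merely sesquilinear form $\tilde\Omega$ on $C_c(G\times G)$ is realised as the composition $(\CF_V^*\otimes1)\,U_\Xi\,(\CF_V\otimes1)$ of three unitaries, so it is implemented by a unitary operator $\Omega$ on $L^2(G\times G)$. \emph{Membership in $W^*(G)\bar\otimes W^*(G)$} is read off from~\eqref{2CC}: being built out of the operators $\lambda_{(1,v)^{-1}}\otimes\lambda_{(q,0)^{-1}}$, the operator $\Omega$ commutes with every $\rho_g\otimes1$ and every $1\otimes\rho_g$, hence lies in the commutant of those operators, which is $W^*(G)\bar\otimes W^*(G)$ since $\rho(G)'=W^*(G)$.

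For the \emph{cocycle identity} I would start from the identity $f_1\star f_2=(f_1\otimes f_2)(\Dhat(\cdot)\Omega^*)$ for $f_1,f_2\in A(G)\cap L^2(G)$ proved above (it comes from Lemma~\ref{KI} applied to $f_i$ of the form $\varphi_i*\check\varphi_i'$, $\varphi_i,\varphi_i'\in C_c(G)$, together with the definition of $\Omega$ as a bilinear form, followed by a simultaneous $A(G)$- and $L^2$-approximation). Comparison with~\eqref{eq:star-Omega} shows at once that $\star=\star_\Omega$ on $A(G)\cap L^2(G)$, which is the second assertion of the theorem. Now $\star$ is associative, being the pullback of the composition product on $\HS(L^2(V))$ along the unitary quantization map $\Op$ (see~\eqref{eq:star0}), and $A(G)\cap L^2(G)$ is a $\star$-subalgebra; hence $\star_\Omega$ is associative on $A(G)\cap L^2(G)$. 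Expanding $((f_1\star_\Omega f_2)\star_\Omega f_3)(x)$ and $(f_1\star_\Omega(f_2\star_\Omega f_3))(x)$ for $x\in W^*(G)$ --- using coassociativity of $\Dhat$ and the extension of~\eqref{eq:star-Omega} from the generators $\lambda_g$ to arbitrary $x$, legitimate since $x\mapsto\Dhat(x)\Omega^*$ is normal --- and evaluating at $x=1$, one finds that this associativity is precisely the statement that $(\Dhat\otimes\iota)(\Omega^*)(\Omega^*\otimes1)$ and $(\iota\otimes\Dhat)(\Omega^*)(1\otimes\Omega^*)$ agree when paired with $f_1\otimes f_2\otimes f_3$ for all $f_i\in A(G)\cap L^2(G)$; once promoted to an identity of operators this is~\eqref{eq:dual-cocycle} after taking adjoints.

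The only step requiring real care --- everything else being mechanical given the lemmas --- is this promotion, i.e. the passage from the dense subspace $A(G)\cap L^2(G)$ to all of $A(G)=W^*(G)_*$. The twisted product $\star_\Omega$ is not visibly continuous for the $A(G)$-norm, so one cannot simply extend associativity by continuity; instead one uses that the two operators in question are \emph{bounded} --- products of the unitaries $\Dhat(\Omega^*)$, $\Omega^*\otimes1$, and the like, which is legitimate precisely because $\Omega$ has already been shown to be unitary. Thus the trilinear forms $(f_1,f_2,f_3)\mapsto(f_1\otimes f_2\otimes f_3)\big((\Dhat\otimes\iota)(\Omega^*)(\Omega^*\otimes1)\big)$ and its partner are norm-continuous on $A(G)^{\times3}$, so norm-density of $A(G)\cap L^2(G)$ in $A(G)$ upgrades the equality to all of $A(G)^{\times3}$; since $A(G)\otimes A(G)\otimes A(G)$ is $\sigma$-weakly dense in the predual of $W^*(G)\bar\otimes W^*(G)\bar\otimes W^*(G)$, the two operators coincide. (Closedness of $A(G)\cap L^2(G)$ under $\star$, needed to form the iterated product within the class on which $\star=\star_\Omega$, is itself a consequence of the explicit formula~\eqref{eq:star}.) The substantive difficulties --- boundedness and unitarity of $\Omega$ through the factorization of Lemma~\ref{lem2}, and the explicit $L^2$-product formula of Lemma~\ref{KI} via the auxiliary Banach space $\CE(G)$ --- lie before the statement and are assumed here.
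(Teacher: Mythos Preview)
Your proposal is correct and follows essentially the same route as the paper: unitarity via Lemma~\ref{lem2}, membership in $W^*(G)\bar\otimes W^*(G)$ from commutation with $\rho_g\otimes1$ and $1\otimes\rho_g$, the equality $\star=\star_\Omega$ on $A(G)\cap L^2(G)$ from the unnamed lemma, and the cocycle identity from associativity of $\star$ together with density of $A(G)\cap L^2(G)$ in $A(G)$. Your elaboration of the ``promotion'' step---that the two candidate operators $(\Dhat\otimes\iota)(\Omega^*)(\Omega^*\otimes1)$ and $(\iota\otimes\Dhat)(\Omega^*)(1\otimes\Omega^*)$ are bounded (indeed unitary) and hence determined by their pairings with the dense set $A(G)\cap L^2(G)$---is exactly the content behind the paper's one-line assertion, and is the right way to make it precise.

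One small correction: closedness of $A(G)\cap L^2(G)$ under $\star$ does not come from formula~\eqref{eq:star}, which is only valid on $\CE(G)$ and in any case does not show the product lands in $A(G)$. The correct reason, which the paper states and you effectively use, is that once $\star=\star_\Omega$ is known on $A(G)\cap L^2(G)$, the product $f_1\star f_2$ lies in $A(G)$ because $\star_\Omega$ is a product on $A(G)$, and in $L^2(G)$ because $\star$ is the pullback of operator composition on $\HS(L^2(V))$; hence it lies in the intersection. This is a bookkeeping slip, not a gap.
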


\begin{remark}
\label{BO}
If we started from the opposite Kohn--Nirenberg quantization (see Remark \ref{AKNQ})
we would have obtained the following dual $2$-cocycle:
\begin{equation}\label{eq:anti-Omega}
\overline\Omega:=\int_G e^{i\langle q^\flat\xi_0-\xi_0,v\rangle}
\lambda_{(q,0)^{-1}}\otimes\lambda_{(1,v)^{-1}}\,d(q,v),
\end{equation}
which differs from $\Omega$ by the inversion of legs and by the sign of the phase:
$$
\overline\Omega=(J\otimes J)\Omega_{21}(J\otimes J)=(\hat R\otimes\hat R)(\Omega^*_{21}),
$$
where $\hat R$ is the unitary antipode of $W^*(G)$ given on the generators by $\hat R(\lambda_g)=\lambda_{g^{-1}}$.
By~\cite[Proposition 6.3, iii)]{DC}, the dual cocycles $\overline \Omega$ and $\Omega$ are cohomologous,
with the unitary operator implementing the cohomological relation equal to $\tilde JJ$, which we will explicitly compute in Section~\ref{ss:mult}.
\end{remark}

\subsection{Identification of the Galois objects}

To complete the picture it remains to check that the Galois object defined by the dual cocycle $\Omega$ is exactly the pair $(B(L^2(V)),\Ad\pi)$.

For $f\in L^2(G)$, consider the operator $L^\star(f)$  on $L^2(G)$ defined by
$$
L^{\star }(f)f'=f\star  f'.%\quad \mbox{and}\quad R^{\star }(f) f'=f'\star  f.
$$
By~\eqref{eq:star-L2} we have
$$
\|L^\star(f)\|\le\|f\|_2.%,\ \ \|R^\star(f)\|\le\|f\|_2.
$$
Furthermore, under the identification of $(L^2(G),\star)$ with $\HS(L^2(V))$ via $\Op$, the map $L^\star$ is simply the left regular representation of $\HS(L^2(V))$ on itself. This representation is a multiple of the canonical representation of $\HS(L^2(V))$ on $L^2(V)$. It follows that there is a unique isomorphism
\begin{equation}\label{eq:Galois-star}
B(L^2(V))\cong L^\star(L^2(G))''\ \ \text{such that}\ \ \Op(f)\mapsto L^\star(f).
\end{equation}
Therefore in order to find an isomorphism $W^*(\hat G;\Omega)\cong B(L^2(V))$ it suffices to find an isomorphism $W^*(\hat G;\Omega)\cong L^\star(L^2(G))''$.

From formula~\eqref{eq:GNS0} for the GNS-map on $W^*(\hat G;\Omega)$, for every $f\in A(G)$ we have the following equality:
$$
\pi_\Omega(f)\vf= S\,L^{\star }(f)\,S\vf
$$
for all $\vf\in A(G)\cap L^2(G)$ such that the right hand side is well-defined,
where $S$ is the unbounded operator defined by $S\vf=\check\vf$. In other words, using the unitary operator $\CJ$ defined in~\eqref{J},
$$
\mathcal J=J\hat J=\hat J J=M(\Delta^{-1/2}) S=SM(\Delta^{1/2}),
$$
we have
\begin{equation}
\label{F1}
\pi_\Omega(f)\vf= \mathcal J\,M(\Delta^{-1/2})\,L^{\star }(f)\,M(\Delta^{1/2})\,\mathcal J\vf
\end{equation}
for all $\vf\in A(G)\cap L^2(G)$ such that the right hand side is well-defined.

We thus see that we need to understand a connection between the operators $L^\star(f)$ and $M(\Delta^s)$. For this we will first get another useful formula for $L^\star$.

First,
we denote by $U_\phi$ the variant of the unitary operator $\tilde U_\phi $, defined in~\eqref{eq:tilde-U-phi}, without the permutations of variables:
\begin{equation}
 \label{UPHI}
 U_\phi :L^2(\hat V\times V)\to L^2(G),\quad
(U_\phi f)(q,v):=f\big(\phi(q),v \big).
\end{equation}
Then we denote by $\gamma$ the unitary representation of $\hat V$ on $L^2(G)$ given by
\begin{align}
\label{pi}
\gamma(\xi)=U_\phi \CF_V^*\,(\tau_{\xi}\otimes \tau _{\xi}) \CF_{V}U_\phi^*,
\end{align}
where $\tau_\xi:L^2(\hat V)\to L^2(\hat V)$ is the left regular representation of $\hat V$ given by $(\tau_\xi\vf)(\xi')=\vf(\xi'-\xi)$.

Next, for fixed $\xi\in \hat V$ and $f\in\CF\CL(G)$, we denote by $(\CF_{V} f)(\bullet,\xi)$
the $V$-invariant function on~$G$ given by  $\big[(q,v)\mapsto (\CF_{ V} f)(q,\xi)\big]$
and by $M\big((\CF_{V}f) (\bullet,\xi)\big)$ the bounded operator
on $L^2(G)$ of  multiplication by the function $(\CF_{V}f) (\bullet,\xi)$.

Lemma~\ref{KI2} then leads to the following result.

\begin{lemma}
\label{B2}
For any $f\in  \CF\CL(G)$, we have the absolutely convergent (in the operator norm) integral formula
\begin{equation}
\label{precious}
L^{\star }(f)=\int_{\hat V}M\big((\CF_{ V} f)(\bullet,\xi)\big)\,
\gamma(\xi)\,d\xi,
\end{equation}
\end{lemma}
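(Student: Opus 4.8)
The plan is to verify~\eqref{precious} by computing both sides in the partial Fourier picture (i.e.\ after applying $\CF_V$ in the $V$-variable) and matching them through the product formula~\eqref{eq:star2} of Lemma~\ref{KI2}. The first task is to make the unitary $\gamma(\xi)$ from~\eqref{pi} explicit. Chasing an arbitrary $h\in L^2(G)$ through $U_\phi^*$, then $\CF_V$, then the double translation $\tau_\xi\otimes\tau_\xi$, then $\CF_V^*$ and $U_\phi$ — each a unitary thanks to the normalization~\eqref{eq:dqdxi} and translation invariance of the Haar measures — one obtains
$$
(\gamma(\xi)h)(q,v)=e^{i\langle\xi,v\rangle}\,h\big(\phi^{-1}(\phi(q)-\xi),v\big),
\qquad\text{equivalently}\qquad
\big(\CF_V\gamma(\xi)h\big)(q,\eta)=(\CF_V h)\big(\phi^{-1}(\phi(q)-\xi),\eta-\xi\big).
$$
Since $M\big((\CF_Vf)(\bullet,\xi)\big)$ is multiplication by a function of the $q$-variable only, it commutes with $\CF_V$, and hence for $f\in\CF\CL(G)$,
$$
\Big(\CF_V\big[M\big((\CF_Vf)(\bullet,\xi)\big)\gamma(\xi)h\big]\Big)(q,\eta)=(\CF_V f)(q,\xi)\,(\CF_V h)\big(\phi^{-1}(\phi(q)-\xi),\eta-\xi\big).
$$

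Next I would integrate this identity over $\xi\in\hat V$ and recognize the right-hand side as exactly the integrand in formula~\eqref{eq:star2} of Lemma~\ref{KI2} applied with $f_1=f$ and $f_2=h$. This gives, for every $h\in\CF\CL(G)$,
$$
\CF_V\Big(\int_{\hat V}M\big((\CF_Vf)(\bullet,\xi)\big)\gamma(\xi)h\,d\xi\Big)=\CF_V(f\star h)=\CF_V\big(L^\star(f)h\big),
$$
so that~\eqref{precious} holds after being applied to any vector of the dense subspace $\CF\CL(G)$. Because $\CF\CL(G)$ is $\|\cdot\|_2$-dense in $L^2(G)$ and $\|L^\star(f)\|\le\|f\|_2$ by~\eqref{eq:star-L2}, this identity on a dense domain will suffice once the right-hand side is known to be a bounded operator.

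Finally, for the absolute convergence of the integral in the operator norm I would use that each $\gamma(\xi)$ is a composition of unitaries, hence unitary, so that $\big\|M\big((\CF_Vf)(\bullet,\xi)\big)\gamma(\xi)\big\|=\big\|(\CF_Vf)(\bullet,\xi)\big\|_{L^\infty(Q)}$. Since $f\in\CF\CL(G)$, the function $\CF_V f$ is bounded and (after the product-support reduction used in the proof of Lemma~\ref{KI2}) vanishes outside a set $K\times\hat K$ with $K\subset Q$ and $\hat K\subset\hat V$ of finite measure; therefore $\int_{\hat V}\|(\CF_Vf)(\bullet,\xi)\|_{L^\infty(Q)}\,d\xi\le\|\CF_Vf\|_\infty\,|\hat K|<\infty$, so $\xi\mapsto M\big((\CF_Vf)(\bullet,\xi)\big)\gamma(\xi)$ is Bochner integrable in operator norm. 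Together with the previous paragraph this upgrades~\eqref{precious} to an equality of bounded operators.

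I expect the only genuinely fiddly parts to be the bookkeeping in the first step — tracking which copy of $\hat V$ each of $\tau_\xi$ and each occurrence of $\CF_V$ acts on, and carrying out the attendant substitutions of integration variables — and the routine measure-theoretic justifications (the product-support reduction and the interchange of the $\xi$-integral with $\CF_V$). Neither is a real obstacle once the explicit formula for $\gamma(\xi)$ is in hand, which is the actual content of the lemma and the thing that makes $L^\star(f)$ a transparent superposition of multiplication and translation operators for the later computations.
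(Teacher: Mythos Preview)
Your proposal is correct and follows exactly the route the paper intends: the paper's entire proof is the single sentence ``Lemma~\ref{KI2} then leads to the following result,'' and you have supplied precisely the details that sentence suppresses---namely, the explicit formula for $\gamma(\xi)$ in the partial Fourier picture, the identification of the integrand with the right-hand side of~\eqref{eq:star2}, and the operator-norm estimate via $\|(\CF_Vf)(\bullet,\xi)\|_{L^\infty(Q)}$.

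One small caveat worth flagging: the ``product-support reduction'' you invoke is not actually carried out in the proof of Lemma~\ref{KI2}; rather, that proof simply \emph{asserts} at the outset that $\CF_Vf_j$ vanishes outside a product $K_j\times\hat K_j$ of finite-measure sets. Strictly speaking, the definition of $\CL(Q\times\hat V)$ only guarantees support in a finite-measure set, not a product, and for a general such set the integral $\int_{\hat V}\|(\CF_Vf)(\bullet,\xi)\|_{L^\infty(Q)}\,d\xi$ can diverge. This is a minor ambiguity in the paper itself (and harmless in practice, since the lemma is only applied to $f\in\CF\CL_0(G)$, where $\CF_Vf$ is compactly supported), but your parenthetical does not quite resolve it---you are inheriting the paper's tacit convention rather than reducing to it.
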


Finally, we introduce a family $(T_z)_{z\in\C}$ of operators on functions on $G$ by
\begin{equation}\label{eq:T}
(T_zf)(q,v)=\int_{\hat V} \Delta(\phi^{-1}(\xi_0-{q^{-1}}^\flat \xi))^{-z}\,e^{i\langle \xi,v\rangle}\,
(\CF_Vf)(q,\xi)\,d\xi,
\end{equation}
where we remind that $\Delta(q')=\Delta_Q(q')/|q'|$.

We need a dense subspace of $\CF\CL(G)$ preserved by these operators:

\begin{definition}
For compact subsets $K,L\subset Q$,  let $\CL_{K,L}(Q\times\hat V)$ be the subspace of $\CL(Q\times \hat V)$ consisting of functions
supported on the compact set
\begin{equation}
\label{eq:suppKL}
\big\{(q,\xi)\in Q\times\hat V\mid q\in K,\ \xi_0-{q^{-1}}^\flat\xi\in \phi(L) \big\}.
\end{equation}
We denote by $\CL_0(Q\times\hat V)$  the union of the spaces $\CL_{K,L}(Q\times\hat V)$ and
by $\CF\CL_0(G)$ (respectively, by $\CF\CL_{K,L}(G)$) the subspace of $\CF\CL(G)$ consisting of functions $f\in\CF\CL(G)$ such that $\CF_Vf$ belongs to $\CL_0(Q\times\hat V)$ (respectively, to $\CL_{K,L}(Q\times\hat V)$).
\end{definition}

\begin{lemma} \label{lem:fl-dense}
The space $\CF\CL_0(G)$ is dense  in $L^2(G)$ and  $A(G)\cap\CF\CL_0(G)$ is dense in $A(G)$. Moreover,  $\CF\CL_0(G)$ is stable under $T_z$  and  $T_z(A(G)\cap\CF\CL_0(G))$ is dense in $L^2(G)$ for any $z\in\C$.
\end{lemma}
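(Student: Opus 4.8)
The plan is to transport the whole statement, via the partial Fourier transform $\CF_V$ in the $V$-variable followed by the measure class isomorphism $\phi$, onto $L^2(Q\times Q)$, where both $\CF\CL_0(G)$ and the family $(T_z)$ become transparent. Composing $\CF_V\colon L^2(G)\to L^2(Q\times\hat V,\,|q|^{-1}d_Q(q)\,d\xi)$ with the substitution $(q,\xi)\mapsto(q,l)$, where $l:=\phi^{-1}(\xi_0-{q^{-1}}^\flat\xi)$ and whose inverse $(q,l)\mapsto(q,\phi(q)-\phi(ql))$ is \emph{continuous}, yields a unitary $\Phi\colon L^2(G)\to L^2(Q\times Q,\,|q|^{-2}|l|^{-1}d_Q(q)\,d_Q(l))$. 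Two facts about $\Phi$ drive the argument. First, $\Phi$ carries $\CF\CL_{K,L}(G)$ onto the bounded functions supported in the rectangle $K\times L$, which is compact (being the continuous image of $K\times L$ under the inverse substitution), so $\Phi(\CF\CL_0(G))$ is precisely the space of bounded, compactly supported functions on $Q\times Q$. Second, using the identity ${q^{-1}}^\flat(\phi(q)-\phi(ql))=\xi_0-\phi(l)$, one sees that $\Phi$ conjugates $T_z$ into pointwise multiplication by the continuous function $(q,l)\mapsto\Delta(l)^{-z}=(\Delta_Q(l)/|l|)^{-z}$.

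The stability and $L^2$-density assertions are then immediate. Multiplication by $\Delta(\cdot)^{-z}$ neither enlarges supports nor destroys boundedness on compact sets, so $T_z$ maps $\CF\CL_0(G)$ bijectively onto itself; and since $Q$ is $\sigma$-compact, $C_c(Q\times Q)\subseteq\Phi(\CF\CL_0(G))$ is dense in $L^2(Q\times Q)$, whence $\CF\CL_0(G)$, and likewise each $T_z(\CF\CL_0(G))$, is dense in $L^2(G)$.

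The two density statements in $A(G)$ carry the real content, and here $\Phi$ is of no direct use, since it does not put $W^*(G)$ into a tractable form. Since $A(G)\cap L^2(G)$ is dense in $A(G)$, it suffices to approximate, in the $A(G)$-norm, elements of $A(G)\cap L^2(G)$. Using the quantization map $\Op\colon L^2(G)\to\HS(L^2(V))$ of~\eqref{Op} one computes
\[
\big(\CF_V\Op^{-1}(\eta\otimes\bar\zeta)\big)(q,\xi)=(\CF_V\eta)(\phi(q))\,\overline{(\CF_V\zeta)(\phi(q)-\xi)},
\]
and then takes $\eta,\zeta\in L^2(V)$ with $\CF_V\eta$ bounded and supported in $\phi(F)$, and $\CF_V\zeta$ bounded and supported in $\phi(F')$, for compact $F,F'\subseteq Q$. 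Letting $F,F'$ range over all compacts, such $\eta,\zeta$ are dense in $L^2(V)$ (because $\phi(Q)$ is conull), so the corresponding finite-rank operators $\sum_i\eta_i\otimes\bar\zeta_i$ are dense in $\HS(L^2(V))$ and hence $\Op^{-1}$ of them is dense in $L^2(G)$; by the descriptions~\eqref{eq:GNS0},~\eqref{eq:GNS},~\eqref{F1} of the GNS maps, together with the Duflo--Moore operator of Lemma~\ref{lem:DM} (which is bounded with bounded inverse on these compactly supported vectors), they lie in $A(G)\cap L^2(G)$ and span a dense subspace of $A(G)$. Finally, they lie in $\CF\CL_0(G)$: on the support of the displayed function one has $q\in F$ and $\phi(q)-\xi=\phi(l')$ with $l'\in F'$, hence $\xi_0-{q^{-1}}^\flat\xi=\phi(q^{-1}l')$ and so $\phi^{-1}(\xi_0-{q^{-1}}^\flat\xi)=q^{-1}l'\in F^{-1}F'$. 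This last step is the main obstacle: the support bookkeeping must be arranged so that $\phi^{-1}$ is only ever applied to sets of the form $\phi(\text{compact})$ — unavoidable, since $\phi$ is continuous but $\phi^{-1}$ is only a measurable injection — which is possible thanks to the identities $q^\flat\xi_0=\phi(q)$ and $q^\flat l^\flat=(ql)^\flat$.

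For the last assertion, the displayed formula together with the fact that $\Delta$ restricts to a homomorphism $Q\to\R_{>0}$ shows that $T_z$ carries the finite-rank family of the previous paragraph into itself; since that family is dense in $L^2(G)$ and contained in $T_z(A(G)\cap\CF\CL_0(G))$, the latter set is dense in $L^2(G)$ as well.
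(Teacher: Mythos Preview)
Your transport via $\Phi$ to $L^2(Q\times Q)$ is a clean device: it makes both the stability of $\CF\CL_0(G)$ under $T_z$ and its $L^2$-density immediate, and is arguably more transparent than the paper's direct verification that the support sets~\eqref{eq:suppKL} exhaust $K\times\hat V$ up to a null set. Those parts of the argument are fine.

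The gap is in the $A(G)$-density step. You build functions $f=\Op^{-1}(\eta\otimes\bar\zeta)$ with controlled Fourier support and assert that they lie in $A(G)$ and span a dense subspace, citing \eqref{eq:GNS0}, \eqref{eq:GNS}, \eqref{F1} and the Duflo--Moore operator. But none of these references yield membership in $A(G)$. Formula~\eqref{F1} is derived \emph{assuming} $f\in A(G)$; it tells you nothing about which $L^2$-functions belong to $A(G)$. The GNS descriptions \eqref{eq:GNS0} and \eqref{eq:GNS} are for two \emph{a priori different} Galois objects $W^*(\hat G;\Omega)$ and $(B(L^2(V)),\Ad\pi)$, and relating them---which is what you would need to read off $A(G)$-membership from quantization data---is precisely the content of Theorem~\ref{thm:KN2}, whose proof (via \eqref{eq:Omega-L}) invokes the present lemma. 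So the argument is circular as written, and I do not see an elementary way to show directly that $\Op^{-1}(\eta\otimes\bar\zeta)$ is a matrix coefficient of $\lambda$.

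The paper avoids this by never leaving the group side: it shows $\lambda_g(\CF\CL_{K,L}(G))\subset\CF\CL_{qK,L}(G)$, hence $\vf*f\in\CF\CL_0(G)$ for $\vf\in C_c(G)$ and $f\in\CF\CL_0(G)$; since $\{\check f:f\in\CF\CL_0(G)\}$ is $L^2$-dense (using boundedness of $f\mapsto\check f$ on each $L^2(K\times V)$), such convolutions are $A(G)$-dense by the very definition of $A(G)$. The same convolution trick, together with boundedness of $T_z$ on each $\CF\CL_{K,L}(G)$, gives $L^2$-density of $T_z(A(G)\cap\CF\CL_0(G))$. This is the missing idea: produce elements of $A(G)\cap\CF\CL_0(G)$ by convolving against $C_c(G)$, rather than by inverting the quantization map.
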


\bp
By definition~\eqref{eq:T}, the operator $T_z$ conjugated by the partial Fourier transform
is the operator of multiplication by the function
$$
(q,\xi)\mapsto  \Delta\big(\phi^{-1}(\xi_0-{q^{-1}}^\flat \xi)\big)^{-z}.
$$
This immediately shows that  $\CF\CL_{K,L}(G)$ is stable under $T_z$  as the modular function $\Delta$ is bounded, as well as bounded away from zero, on any compact subset
of $Q$. Hence $\CF\CL_0(G)$ is also stable under $T_z$.

\smallskip

Since $\F_V$ is unitary, to prove density of $\CF\CL_0(G)$ in $L^2(G)$ it suffices to show that $\CL_0(Q\times\hat V)$ is dense in $L^2(Q\times\hat V,|q|^{-1}d_Q(q)\,d\xi)$. For this it suffices to show that for every compact set $K\subset Q$ the union of the sets~\eqref{eq:suppKL} over the compact sets $L\subset Q$ is a subset of $K\times\hat V$ of full measure. But this is clear, since this union is
$$
\big\{(q,\xi)\in Q\times\hat V\mid q\in K,\ \xi\in q^\flat\xi_0-\phi(Q)\big\}
$$
and by assumption $\phi(Q)$ is a subset  of $\hat V$ of full measure.

\smallskip

Since the map $f\mapsto \check f$ is bounded on $L^2(K\times V,|q|^{-1}d_Q(q)\,d\xi)\subset L^2(G)$, with image $L^2(K^{-1}\times V,|q|^{-1}d_Q(q)\,d\xi)$, the functions $\check f$ for $f\in\CF\CL_0(G)$
form a dense subspace of $L^2(G)$ as well. Hence the functions
$\vf* f$ for $\vf\in C_c(G)$ and $f\in\CF\CL_0(G)$ are dense in $A(G)$.
An easy calculation shows that for any $g=(q,v)\in G$ and compacts $K,L\subset Q$, we have $\lambda_g(\CF\CL_{K,L}(G))\subset\CF\CL_{qK,L}(G)$.
Hence, if $f\in \CF\CL_{K,L}(G)$ and $\vf\in C_c(G)$ has support contained in $U\times V$ for a compact set $U\subset Q$, then
$\vf* f\in\CF\CL_{UK,L}(G)$.
Therefore $\vf*f\in A(G)\cap\CF\CL_0(G)$ for all $\vf\in C_c(G)$ and $f\in\CF\CL_0(G)$. Thus $A(G)\cap\CF\CL_0(G)$ is dense in $A(G)$.

Taking a standard approximate unit in $L^1(G)$ for $\varphi$, we see also that  functions of the form $\vf* f$, for $\vf\in C_c(G)$ and $f\in\CF\CL_0(G)$, are dense in
$\CF\CL_0(G)$, hence
in $L^2(G)$. Moreover, since the operators $T_z$ are bounded on the spaces $\CF\CL_{K,L}(G)$, we may also conclude that the functions $T_z(\vf*f)$ are dense in $L^2(G)$
 for all $z$. In particular, $T_z(A(G)\cap\CF\CL_0(G))$ is dense in $L^2(G)$ for all $z$.
\ep

\begin{proposition}\label{prop:chi-star}
The operator $M(\Delta)$ is affiliated with the von Neumann algebra $L^\star(L^2(G))''$.
Moreover,  for all $f\in\CF\CL_0(G)$ and $z\in\C$, we have
$$
M(\Delta^z)L^\star(f)=L^\star(\Delta^zf)\ \ \text{on}\ \ L^2(G),\ \ M(\Delta^z)L^\star(f)M(\Delta^{-z})=L^\star(T_zf)
\ \ \text{on}\ \ C_c(G).
$$
\end{proposition}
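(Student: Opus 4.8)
The plan is to deduce all three assertions from the integral formula~\eqref{precious} for $L^\star$ on $\CF\CL(G)$, an explicit formula for the unitaries $\gamma(\xi)$, and a single identity relating $\Delta$, $\phi$ and the dual action. I will use repeatedly that if $f\in\CF\CL_{K,L}(G)$ then $\CF_Vf$ is supported in $\{q\in K\}\times\hat V$; consequently $\Delta^zf$ and $T_zf$ again lie in $\CF\CL_{K,L}(G)$ (for $\Delta^zf$ because $\Delta$ is bounded and bounded away from $0$ on the compact set $K$, and for $T_zf$ by the computation in the proof of Lemma~\ref{lem:fl-dense}), and that each operator $M\big((\CF_Vf)(\bullet,\xi)\big)$ appearing in~\eqref{precious}, and hence $L^\star(f)$, has range contained in $L^2(K\times V)\subset L^2(G)$; since $M(\Delta^z)$ is bounded on $L^2(K\times V)$, the composition $M(\Delta^z)L^\star(f)$ is everywhere defined and bounded, so the first displayed identity is a statement about bounded operators.

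To prove that identity I would left-multiply~\eqref{precious} by $M(\Delta^z)$ (more precisely by $M(\Delta^z)P$, where $P$ is the projection onto $L^2(K\times V)$, which absorbs the projection since $L^\star(f)$ already maps into $L^2(K\times V)$), use that left multiplication by a bounded operator commutes with the norm-convergent Bochner integral, and absorb $M(\Delta^z)$ into each leading multiplication operator, obtaining $\int_{\hat V}M\big(\Delta^z(\CF_Vf)(\bullet,\xi)\big)\gamma(\xi)\,d\xi$. Since $\Delta$ is $V$-invariant, $\Delta^z(\CF_Vf)(\bullet,\xi)=\big(\CF_V(\Delta^zf)\big)(\bullet,\xi)$, and a further application of~\eqref{precious} identifies this integral with $L^\star(\Delta^zf)$. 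For the affiliation claim, put $z=it$, so $M(\Delta^{it})$ is unitary, and check that $M(\Delta^{it})(h\star g)=\big(M(\Delta^{it})h\big)\star g$ for all $h,g\in L^2(G)$: both sides are jointly $L^2$-continuous in $(h,g)$ by~\eqref{eq:star-L2}, so it suffices to take $h,g\in C_c(G)\subset\CE(G)$ and apply~\eqref{eq:star}; the identity then holds because the first coordinate of $(q,v)(1,v')$ is $q$ and $\Delta$ depends only on this coordinate, whence $\Delta(q)^{it}\,h\big((q,v)(1,v')\big)=(\Delta^{it}h)\big((q,v)(1,v')\big)$. Thus $M(\Delta^{it})$ commutes with every right $\star$-multiplication $h\mapsto h\star g$; under the identification~\eqref{eq:Galois-star} of $(L^2(G),\star)$ with $\HS(L^2(V))$ these right multiplications generate the commutant $L^\star(L^2(G))'$, so $M(\Delta^{it})\in L^\star(L^2(G))''$ for every $t\in\R$. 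As $t\mapsto M(\Delta^{it})$ is the one-parameter unitary group generated by the self-adjoint operator $M(\log\Delta)$, the latter — and hence also $M(\Delta)$ — is affiliated with $L^\star(L^2(G))''$.

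For the conjugation formula, by the first identity it is enough to show $L^\star(\Delta^zf)\big(M(\Delta^{-z})h\big)=L^\star(T_zf)h$ for $h\in C_c(G)$ (on which $M(\Delta^{-z})$ is defined, with values again in $C_c(G)$). Unwinding the chain $U_\phi\CF_V^*(\tau_\xi\otimes\tau_\xi)\CF_VU_\phi^*$ in~\eqref{pi} gives
$$
(\gamma(\xi)F)(q,v)=e^{i\langle\xi,v\rangle}\,F\big(\phi^{-1}(\phi(q)-\xi),v\big),
$$
so that $\gamma(\xi)M(\Delta^{-z})h=M\big(\Delta(\phi^{-1}(\phi(\bullet)-\xi))^{-z}\big)\gamma(\xi)h$. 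Inserting~\eqref{precious} for $\Delta^zf$ into $L^\star(\Delta^zf)\big(M(\Delta^{-z})h\big)$, pulling the vector $M(\Delta^{-z})h$ inside the norm-convergent operator integral, and composing the multiplication operators, the integrand becomes $M\big(\Delta(\bullet)^z(\CF_Vf)(\bullet,\xi)\,\Delta(\phi^{-1}(\phi(\bullet)-\xi))^{-z}\big)\gamma(\xi)h$. The key point is the identity
$$
\phi^{-1}(\phi(q)-\xi)=q\,\phi^{-1}\big(\xi_0-{q^{-1}}^\flat\xi\big),
$$
which follows by applying $q^\flat$ to both sides and using $q^\flat q'^\flat=(qq')^\flat$, $\phi(q)=q^\flat\xi_0$ and injectivity of $\phi$; since $\Delta$ restricts to a homomorphism on $Q$, this yields
$$
\Delta(q)^z\,\Delta\big(\phi^{-1}(\phi(q)-\xi)\big)^{-z}=\Delta\big(\phi^{-1}(\xi_0-{q^{-1}}^\flat\xi)\big)^{-z},
$$
so by the definition~\eqref{eq:T} the multiplier above equals $\big(\CF_V(T_zf)\big)(\bullet,\xi)$ and the integrand is $M\big((\CF_V(T_zf))(\bullet,\xi)\big)\gamma(\xi)h$. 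Re-integrating via~\eqref{precious} for $T_zf\in\CF\CL_0(G)$ gives $L^\star(T_zf)h$, as required.

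The only delicate point is the bookkeeping with the unbounded multiplication operators $M(\Delta^z)$ and $M\big(\Delta(\phi^{-1}(\phi(\bullet)-\xi))^{-z}\big)$: along the computation these are applied only to vectors supported on sets — namely $L^2(K\times V)$, and the support of $\gamma(\xi)h$, whose image under $q\mapsto\phi^{-1}(\phi(q)-\xi)$ lies in the compact projection to $Q$ of the support of $h$ — on which $\Delta$ and $\phi^{-1}$ stay bounded and bounded away from $0$, and this is what legitimizes the factorizations and the interchanges with the Bochner integral. The genuinely non-formal input is the identity $\phi^{-1}(\phi(q)-\xi)=q\,\phi^{-1}(\xi_0-{q^{-1}}^\flat\xi)$: it is precisely this that forces the conjugated operator $M(\Delta^z)L^\star(f)M(\Delta^{-z})$ to land exactly on $L^\star(T_zf)$.
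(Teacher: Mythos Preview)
Your proof is correct and follows essentially the same route as the paper's: both use the integral formula~\eqref{precious}, the $V$-invariance of $\Delta$, and the commutation of $\gamma(\xi)$ with $M(\Delta^{-z})$ via the identity $\phi^{-1}(\phi(q)-\xi)=q\,\phi^{-1}(\xi_0-{q^{-1}}^\flat\xi)$ (which the paper writes in the equivalent form $\Delta(\phi^{-1}(q^\flat\xi_0-\xi))^{-z}=\Delta(q)^{-z}\Delta(\phi^{-1}(\xi_0-{q^{-1}}^\flat\xi))^{-z}$). The only notable difference is in the affiliation step: the paper argues directly that $M(\Delta^{it})L^\star(f)=L^\star(\Delta^{it}f)$ lies in $L^\star(L^2(G))''$ and invokes $\sigma$-weak density, whereas you show $M(\Delta^{it})$ commutes with right $\star$-multiplications (via formula~\eqref{eq:star}) and use that these generate the commutant under the identification~\eqref{eq:Galois-star}. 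Both arguments are valid; yours is a small detour, since the first identity you have already proved for general $z$ specializes to $z=it$ and yields the paper's density argument immediately.
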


\bp
Since $\Delta$ depends only on the coordinate $Q$, the operators $M(\Delta^{it})$, $t\in\R$, commute with the partial Fourier transform $\CF_V$.
From formula~\eqref{eq:star2} we see then that
$$
M(\Delta^{it})L^\star(f)=L^\star(\Delta^{it}f)\ \ \text{for all}\ \ f\in\CF\CL(G).
$$
As $L^\star(\CF\CL_0(G))$ is $\sigma$-weakly dense in $L^\star(L^2(G))''$, this implies that $M(\Delta)$ is affiliated with $L^\star(L^2(G))''$. The same formula also shows that
since $\Delta^z f\in\CF\CL_0(G)$ for $f\in \CF\CL_0(G)$, we have
$M(\Delta^z)L^\star(f)=L^\star(\Delta^zf)$ on $L^2(G)$.

\smallskip

Next, formula \eqref{precious} and definition~\eqref{eq:T} of $T_z$ give, for $f\in \CF\CL_0(G)$, the absolutely convergent integral
\begin{align}
\label{T12}
L^{\star }(T_{z}f)=\int_{\hat V}
M\Big(\Delta\big(\phi^{-1}(\xi_0-{\bullet^{-1}}^\flat \xi)\big)^{-z}
(\CF_{ V} f)(\bullet,\xi)\Big)\,
\gamma(\xi)\,d\xi.
\end{align}

On the other hand, using again \eqref{precious}, we have on $\Delta^{\Re(z)}L^2(G)$:
$$
L^{\star }(f)M(\Delta^{-z})=\int_{\hat V}M\big((\CF_{ V} f)(\bullet,\xi)\big)\,
\gamma(\xi)M(\Delta^{-z})\,d\xi.
$$
Now, for $\vf\in \Delta^{\Re(z)}L^2(G)$, we have
\begin{align*}
(\gamma(\xi)M(\Delta^{-z})\vf)(q,v)&=\big(U_\phi \CF_{V}^*\,(\tau_{\xi}\otimes \tau _{\xi}) \CF_{V}U_\phi^*(\Delta^{-z}\vf)\big)(q,v).
\end{align*}
As $\CF_V$ commutes with operators of multiplication by $V$-invariant functions, from this we easily deduce:
 \begin{align*}
 \big(\gamma(\xi)M(\Delta^{-z})\vf\big)(q,v)&=\Delta(\phi^{-1}(q^\flat\xi_0-\xi))^{-z}(\gamma(\xi)\vf)(q,v)\\
  &=\Delta(q)^{-z}\Delta\big(\phi^{-1}(\xi_0-{q^{-1}}^\flat\xi))^{-z}(\gamma(\xi)\vf)(q,v).
 \end{align*}
Hence we have
 $$
 \gamma(\xi)M(\Delta^{-z})=M(\Delta^{-z})M\big( \Delta(\phi^{-1}(\xi_0-{\bullet^{-1}}^\flat\xi)^{-z})\big)\, \gamma(\xi),
 $$
 which by closedness of $M(\Delta^{-z})$ finally gives
 \begin{align*}
 L^{\star }(f)M(\Delta^{-z})=M(\Delta^{-z}) \int_{\hat V}
M\Big( \Delta\big(\phi^{-1}(\xi_0-{\bullet^{-1}}^\flat \xi\big)^{-z}\Big)
(\CF_{ V} f)(\bullet,\xi)\bigg)\,\gamma(\xi)\,d\xi
 \end{align*}
on $L^2(G)\cap\Delta^{\Re(z)}L^2(G)$.
Together with~\eqref{T12} this shows that the identity
$$
L^\star(f)M(\Delta^{-z})=M(\Delta^{-z})L^\star(T_zf)
$$
holds on $L^2(G)\cap\Delta^{\Re(z)}L^2(G)$, in particular, on $C_c(G)$.
\ep

This proposition and identity~\eqref{F1} imply that for any $f\in A(G)\cap\CF\CL_0(G)$ we have
\begin{equation}\label{eq:Omega-L}
\pi_\Omega(f)=\CJ L^\star(T_{-1/2}f)\CJ
\end{equation}
on $C_c(G)*C_c(G)\subset A(G)\cap C_c(G)$, hence on $L^2(G)$, as both sides of the identity are bounded operators. Since $A(G)\cap\CF\CL_0(G)$ is dense in $A(G)$ and $T_{-1/2}(A(G)\cap\CF\CL_0(G))$ is dense in $L^2(G)$ by Lemma~\ref{lem:fl-dense}, it follows that $W^*(\hat G;\Omega)=\CJ L^\star(L^2(G))''\CJ$. Recalling also that the action of $G$ on $W^*(\hat G;\Omega)$ is given by the automorphisms $\Ad\rho_g$, we see that this action transforms under the isomorphism $\Ad\CJ$ of $W^*(\hat G;\Omega)$ onto $L^\star(L^2(G))''$ into the action given by the automorphisms $\Ad\lambda_g$. Using the isomorphism~\eqref{eq:Galois-star} the latter action transforms, in turn, into the action $\Ad\pi$ on $B(L^2(V))$. We therefore get the required isomorphism of the Galois objects:

\begin{theorem}\label{thm:KN2}
For any second countable locally compact group $G=Q\ltimes V$ satisfying the dual orbit Assumption~\ref{Fro}, the $G$-Galois object $(W^*(\hat G;\Omega),\beta)$ defined by the dual cocycle~\eqref{2CC} is isomorphic to $(B(L^2(V)),\Ad\pi)$; explicitly, the isomorphism maps $\pi_\Omega(f)$, $f\in A(G)\cap\CF\CL_0(G)$, into $\Op(T_{-1/2}f)$.
\end{theorem}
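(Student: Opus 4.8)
\emph{Step~1 (the key operator identity)}. The overall strategy is to identify $\pi_\Omega$ concretely and then transport everything along the quantization map $\Op$. I would first establish that $\pi_\Omega(f)=\CJ L^\star(T_{-1/2}f)\CJ$ for every $f\in A(G)\cap\CF\CL_0(G)$. This starts from identity~\eqref{F1}, $\pi_\Omega(f)\vf=\CJ M(\Delta^{-1/2})L^\star(f)M(\Delta^{1/2})\CJ\vf$, which follows from the GNS description~\eqref{eq:GNS0}, the coincidence $\star_\Omega=\star$ on $A(G)\cap L^2(G)$ from Theorem~\ref{thm:KN1} (so that $\pi_\Omega(f)\vf=SL^\star(f)S\vf$ with $S\psi=\check\psi$), and $\CJ=M(\Delta^{-1/2})S=SM(\Delta^{1/2})$. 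The delicate point is the unboundedness of $M(\Delta^{\pm1/2})$: I would restrict to $\vf\in C_c(G)*C_c(G)$, so that $\CJ\vf\in C_c(G)$, and apply Proposition~\ref{prop:chi-star} with $z=-1/2$, giving $M(\Delta^{-1/2})L^\star(f)M(\Delta^{1/2})=L^\star(T_{-1/2}f)$ on $C_c(G)$. Then $\pi_\Omega(f)\vf=\CJ L^\star(T_{-1/2}f)\CJ\vf$ on this dense subspace; both sides are bounded operators — the left since $\pi_\Omega(f)\in W^*(\hat G;\Omega)$, the right since $\|L^\star(g)\|\le\|g\|_2$ by~\eqref{eq:star-L2} and $T_{-1/2}f\in L^2(G)$ — so the identity extends to all of $L^2(G)$.

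\emph{Step~2 (isomorphism of algebras)}. By Lemma~\ref{lem:fl-dense}, $A(G)\cap\CF\CL_0(G)$ is dense in $A(G)$, and since $\pi_\Omega$ is $\|\cdot\|_{A(G)}$-contractive (it is a slice $(f\otimes\iota)(\hat W\Omega^*)$ of a unitary), the operators $\pi_\Omega(f)$ with $f$ in this subspace generate $W^*(\hat G;\Omega)$. Again by Lemma~\ref{lem:fl-dense}, $T_{-1/2}(A(G)\cap\CF\CL_0(G))$ is dense in $L^2(G)$, so the operators $L^\star(T_{-1/2}f)$ generate $L^\star(L^2(G))''$. As $\CJ=J\hat J$ is a self-adjoint unitary, Step~1 yields $W^*(\hat G;\Omega)=\CJ\,L^\star(L^2(G))''\,\CJ$, with $\Ad\CJ$ carrying $\pi_\Omega(f)$ to $L^\star(T_{-1/2}f)$. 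Composing with the inverse of the isomorphism~\eqref{eq:Galois-star}, which sends $L^\star(g)$ to $\Op(g)$, I obtain an isomorphism $W^*(\hat G;\Omega)\cong B(L^2(V))$ sending $\pi_\Omega(f)\mapsto\Op(T_{-1/2}f)$, as in the statement.

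\emph{Step~3 (equivariance)}. The action $\beta$ on $W^*(\hat G;\Omega)$ is $\beta_g=\Ad\rho_g$, and using $\CJ\rho_g=\lambda_g\CJ$ this transports under $\Ad\CJ$ to the action $\Ad\lambda_g$ on $L^\star(L^2(G))''$. On the other side, $\Op(\lambda_gf)=\pi(g)\Op(f)\pi(g)^*$ (Lemma~\ref{lem:equivariance}) forces $(\lambda_gf_1)\star(\lambda_gf_2)=\lambda_g(f_1\star f_2)$, hence $L^\star(\lambda_gf)=\lambda_gL^\star(f)\lambda_g^*$, so the isomorphism~\eqref{eq:Galois-star} intertwines $\Ad\lambda_g$ with $\Ad\pi(g)$ on $B(L^2(V))$. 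Chaining the identifications of Steps~2 and~3, the isomorphism of Step~2 is $G$-equivariant, i.e.\ it identifies $(W^*(\hat G;\Omega),\beta)$ with $(B(L^2(V)),\Ad\pi)$.

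The single point I expect to require genuine care is the passage within Step~1 from~\eqref{F1} to the bounded identity $\pi_\Omega(f)=\CJ L^\star(T_{-1/2}f)\CJ$: keeping track of the domains of the unbounded multiplications $M(\Delta^{\pm1/2})$, applying Proposition~\ref{prop:chi-star} on the core $C_c(G)$, and only then invoking boundedness to pass to all of $L^2(G)$. Everything else is a chase through the isomorphisms and equivariance properties already in place.
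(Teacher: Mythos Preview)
Your proposal is correct and follows essentially the same route as the paper: establish the operator identity $\pi_\Omega(f)=\CJ L^\star(T_{-1/2}f)\CJ$ on the dense subspace $C_c(G)*C_c(G)$ via~\eqref{F1} and Proposition~\ref{prop:chi-star}, extend by boundedness, then use the density statements of Lemma~\ref{lem:fl-dense} and the isomorphism~\eqref{eq:Galois-star} together with $\CJ\rho_g\CJ=\lambda_g$ to pass to the Galois-object isomorphism. The care you flag about domains of $M(\Delta^{\pm1/2})$ is exactly the point the paper handles in the same way.
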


As a byproduct we see that $(B(L^2(V)),\Ad\pi)$ is indeed a Galois object. We remind that by Theorem~\ref{thm:genuine-rep} this is therefore the unique up to isomorphism I-factorial Galois object defined by a genuine representation of~$G$.

\begin{remark}
Formula~\eqref{precious} shows that it is natural to extend the representation $L^\star$ to a larger class of functions including all measurable functions on $Q$ (viewed as functions on $G$) by letting $L^\star(f)=M(f)$ for such functions. With this definition we have $\Delta^s\star\Delta^t=\Delta^{s+t}$ and, by Proposition~\ref{prop:chi-star}, $\Delta^s\star f=\Delta^sf$, $f\star\Delta^s=\Delta^s T_{-s}f$ for $f\in\CF\CL_0(G)$. We are therefore exactly in the situation discussed in Remark~\ref{rem:two-star}, with identities~\eqref{eq:chi-star} satisfied for $c=1$ and $\alpha=1/2$. This ``explains'' why we were able to construct the dual cocycle $\Omega$ using the quantization map~$\Op$ instead of the modified quantization map $\Op'$.
\end{remark}

\subsection{Deformation of the trivial cocycle}\label{ss:deform}

We continue to consider a second countable locally compact group $G=Q\ltimes V$ satisfying the dual orbit Assumption~\ref{Fro} and a dual unitary $2$-cocycle $\Omega$ on $G$ defined by~\eqref{2CC}.

By replacing the distinguished point $\xi_0\in\hat V$ by $\xi^q_0:=q^\flat\xi_0$ we in fact get a family of such dual cocycles $\Omega_q$ indexed by the elements $q\in Q$. We already know that all these cocycles are cohomologous, since they correspond to the unique Galois object defined by a genuine representation. This is also easy to see as follows.

\begin{proposition}\label{prop:cont}
We have $\Omega_q=(\lambda_q\otimes\lambda_q)\Omega\Dhat(\lambda_q)^*$ for all $q\in Q$.
In particular, the map $q\mapsto\Omega_q$ is continuous in the so-topology.
\end{proposition}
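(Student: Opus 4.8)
The plan is to verify the operator identity $\Omega_q=(\lambda_q\otimes\lambda_q)\Omega\Dhat(\lambda_q)^*$ by a direct change of variables in the defining integral~\eqref{2CC}, and then to read off continuity formally. Conceptually one already knows $\Omega$ and $\Omega_q$ are cohomologous: by Theorem~\ref{thm:KN2} applied with $\xi_0$ and with $\xi_0^q$ (both satisfy Assumption~\ref{Fro}, since $q'\mapsto q'^\flat\xi_0^q$ is $\phi$ composed with a right translation), they define the \emph{same} Galois object $(B(L^2(V)),\Ad\pi)$ — note $\pi=\pi_V|_G$ does not depend on the distinguished point — so by Proposition~\ref{prop:Galois-iso} they differ by some unitary $u\in W^*(G)$. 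The content of the proposition is that one may take $u=\lambda_q$, and this is most transparent by computation.

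First I would work at the level of the sesquilinear form on $C_c(G\times G)$ attached to $\Omega$ (the domain where, as in the proof of Lemma~\ref{lem2}, the relevant triple integrals converge absolutely; conjugation by $\lambda_q$ preserves this domain). Using $\Dhat(\lambda_q)=\lambda_q\otimes\lambda_q$, that $\lambda$ is a homomorphism, and the group identities $(q,0)(1,v')^{-1}(q,0)^{-1}=(1,qv')^{-1}$ and $(q,0)(q',0)^{-1}(q,0)^{-1}=(\Ad_q(q'),0)^{-1}$, inserting~\eqref{2CC} gives
\[
(\lambda_q\otimes\lambda_q)\Omega\Dhat(\lambda_q)^* = \int_G e^{-i\langle q'^\flat\xi_0-\xi_0,v'\rangle}\,\lambda_{(1,qv')^{-1}}\otimes\lambda_{(\Ad_q(q'),0)^{-1}}\,d(q',v'),
\]
so that conjugation by $\lambda_q$ acts on the index variable as the inner automorphism $\Ad_{(q,0)}$ of $G$. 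I would then substitute $(q'',v''):=\Ad_{(q,0)}(q',v')=(\Ad_q(q'),qv')$. Since $\flat\colon Q\to\Aut(\hat V)$ is a homomorphism and $\langle\eta,q^{-1}w\rangle=\langle q^\flat\eta,w\rangle$, a short computation gives $\langle q'^\flat\xi_0-\xi_0,v'\rangle=\langle(q'')^\flat\xi_0^q-\xi_0^q,v''\rangle$, which is precisely the phase in the defining formula for $\Omega_q$. The Jacobian of $\Ad_{(q,0)}$ with respect to left Haar measure $\mu$ on $G$ is $\Delta(q,0)=\Delta_Q(q)/|q|$, i.e.\ $\int_G F\circ\Ad_{(q,0)}\,d\mu=\Delta(q,0)\int_G F\,d\mu$, so the integral becomes $\Delta(q,0)$ times the defining integral for $\Omega_q$ taken against $\mu$.

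To conclude I would observe that the Haar measure intervening in~\eqref{2CC} for $\Omega_q$ is not $\mu$ but the one normalized relative to $\xi_0^q$: since the map $q'\mapsto q'^\flat\xi_0^q$ equals $\phi$ composed with right translation by $q$, the normalization~\eqref{eq:dqdxi} applied to $\xi_0^q$ forces that measure to equal $\Delta(q,0)\,\mu$. The two factors $\Delta(q,0)$ cancel, giving $\Omega_q=(\lambda_q\otimes\lambda_q)\Omega\Dhat(\lambda_q)^*$ as forms on $C_c(G\times G)$, hence as bounded operators. I expect the only genuinely delicate point to be exactly this bookkeeping of $\Delta(q,0)$: it occurs once as the Jacobian of $\Ad_{(q,0)}$ and once because passing from $\xi_0$ to $\xi_0^q$ rescales the Haar measure built into~\eqref{2CC}; dropping either leaves a spurious modular factor (and a non-unitary would-be cocycle), which is the sort of slip the paper is implicitly warning against.

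For the last assertion, continuity is then automatic: $q\mapsto\lambda_q$ is strongly continuous, hence so is $q\mapsto\Dhat(\lambda_q)=\lambda_q\otimes\lambda_q$; since all factors are unitaries, operator multiplication is jointly strongly continuous on the relevant norm-bounded sets, so $q\mapsto\Omega_q=(\lambda_q\otimes\lambda_q)\,\Omega\,(\lambda_q^*\otimes\lambda_q^*)$ is continuous in the strong operator topology.
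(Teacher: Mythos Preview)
Your argument is correct and complete, including the delicate point you single out: the factor $\Delta(q,0)$ appears once as the Jacobian of $\Ad_{(q,0)}$ and once because the normalization~\eqref{eq:dqdxi} applied to $\xi_0^q=q^\flat\xi_0$ rescales the Haar measure on $Q$ (and hence on $G$) by exactly $\Delta_Q(q)/|q|=\Delta(q,0)$. Your continuity argument is also fine.

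The paper takes a different route. Rather than manipulating the integral~\eqref{2CC} directly, it uses the factorization $\Omega=(\CF_V^*\otimes1)U_\Xi(\CF_V\otimes1)$ of Lemma~\ref{lem2} and computes that conjugation by $\lambda_q\otimes\lambda_q$ replaces the transformation $\Xi$ by $\Xi_q(q',\xi',g)=(q',\xi',q\phi^{-1}(\xi_0+{q^{-1}}^\flat\xi')q^{-1}g)$; a short identity then shows $q\phi^{-1}(\xi_0+{q^{-1}}^\flat\xi')q^{-1}=\phi_q^{-1}(\xi_0^q+\xi')$, which is precisely the $\Xi$ for $\xi_0^q$. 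The advantage of the paper's approach is that all the maps involved are composition operators by left translations in the $G$-variable, hence automatically unitary irrespective of normalization; the $\Delta(q,0)$ bookkeeping simply does not arise. Your approach, by contrast, is more elementary in that it does not invoke Lemma~\ref{lem2}, and it makes the cohomology relation $\Omega_q=(\lambda_q\otimes\lambda_q)\Omega\Dhat(\lambda_q)^*$ visible directly at the level of the defining integral; the price is that you must track the measure renormalization explicitly, which you do correctly.
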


\bp
Considering as before the partial Fourier transform $\CF_V$ as a map $L^2(G)\to L^2(Q\times\hat V,|q|^{-1}d_Q(q)\,d\xi)$, we have
$$
(\CF_V\lambda_q\CF_V^*f)(q',\xi')=|q|f(q^{-1}q',{q^{-1}}^\flat\xi').
$$
From this and Lemma~\ref{lem2} we get
$$
(\lambda_q\otimes\lambda_q)\Omega\Dhat(\lambda_q)^*=(\CF_V^*\otimes1)U_{\Xi_q}(\CF_V\otimes1),
$$
where $\Xi_q\colon Q\times\hat V\times G\to Q\times\hat V\times G$ is defined by
$$
\Xi_q(q',\xi',g):=(q',\xi',q\phi^{-1}(\xi_0+{q^{-1}}^\flat\xi')q^{-1}g).
$$

On the other hand, consider the map $\phi_q\colon Q\to\hat V$ defined by $\xi^q_0=q^\flat\xi_0$, so that
$$
\phi_q(q'):=q'\xi^q_0=\phi(q'q).
$$
Then
$$
q\phi^{-1}(\xi_0+{q^{-1}}^\flat\xi')q^{-1}=\phi^{-1}(\xi_0^q+\xi')q^{-1}=\phi_q^{-1}(\xi_0^q+\xi'),
$$
which shows that $\Xi_q$ is exactly the map from the expression for $\Omega_q$ in Lemma~\ref{lem2} (with $\xi_0$ replaced by $\xi^q_0$). This proves the proposition.
\ep

Although the dual cocycles $\Omega_q$ are all cohomologous, under quite general assumptions they can be used to construct a continuous deformation of the trivial cocycle. Namely, we have the following result.

\begin{proposition}\label{prop:1-deform}
Assume that $G=Q\ltimes V$ in addition to satisfying the dual orbit Assumption~\ref{Fro} is such that the map $\phi\colon Q\to\hat V$ is open. Assume also that there exists a sequence $(z_n)_n$ of elements of the center $Z(Q)$ of $Q$ such that $z_n^\flat\to0$ in $\End(\hat V)$ pointwise. Then $\Omega_{z_n^{-1}}\to1$ in the so-topology.
\end{proposition}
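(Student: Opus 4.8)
The plan is to feed the cocycle relation of Proposition~\ref{prop:cont} into the factorization of Lemma~\ref{lem2}, thereby reducing the statement to strong convergence of a sequence of ``change of variables'' unitaries, and then to establish that convergence by disintegrating $L^2(G\times G)$ and applying dominated convergence.

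Concretely, I would first rewrite $\Omega_{z_n^{-1}}$ in the form used in the proof of Proposition~\ref{prop:cont}: $\Omega_{z_n^{-1}}=(\CF_V^*\otimes1)\,U_{\Xi_{z_n^{-1}}}\,(\CF_V\otimes1)$, where $U_{\Xi_{z_n^{-1}}}$ is composition on $L^2(Q\times\hat V\times G)$ with the transformation $\Xi_{z_n^{-1}}(q',\xi',g)=(q',\xi',\,z_n^{-1}\phi^{-1}(\xi_0+z_n^\flat\xi')z_n\,g)$. The point of going through Proposition~\ref{prop:cont} rather than applying Lemma~\ref{lem2} directly with base point $(z_n^{-1})^\flat\xi_0$ is that the conjugated form ``recenters'' things: since $z_n$ is central in $Q$ and $\phi^{-1}(\xi_0+z_n^\flat\xi')\in Q$, we have $z_n^{-1}\phi^{-1}(\xi_0+z_n^\flat\xi')z_n=\phi^{-1}(\xi_0+z_n^\flat\xi')$, so $\Xi_{z_n^{-1}}(q',\xi',g)=(q',\xi',\phi^{-1}(\xi_0+z_n^\flat\xi')\,g)$. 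As $\CF_V\otimes1$ is a fixed unitary, it then suffices to prove $U_{\Xi_{z_n^{-1}}}\to1$ in the strong operator topology.

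Here the openness hypothesis enters: $\phi$, being a continuous open injection, is a homeomorphism of $Q$ onto the open subset $\phi(Q)\subseteq\hat V$, so $\phi^{-1}$ is continuous at $\xi_0=\phi(1)$ with $\phi^{-1}(\xi_0)=1$. Consequently, for every $\xi'\in\hat V$ the assumption $z_n^\flat\xi'\to0$ forces $\xi_0+z_n^\flat\xi'\to\xi_0$ (which lies in $\phi(Q)$ for large $n$), hence $h_n(\xi'):=\phi^{-1}(\xi_0+z_n^\flat\xi')\to1$ in $Q$. Now I would identify $L^2(Q\times\hat V\times G)\cong L^2\bigl(Q\times\hat V;L^2(G)\bigr)$; the description of $\Xi_{z_n^{-1}}$ shows that $U_{\Xi_{z_n^{-1}}}$ acts fibrewise as left translation, $(U_{\Xi_{z_n^{-1}}}\psi)(q',\xi')=\lambda_{h_n(\xi')^{-1}}\bigl(\psi(q',\xi')\bigr)$ in $L^2(G)$, so that
\[
\bigl\|U_{\Xi_{z_n^{-1}}}\psi-\psi\bigr\|^2=\int_{Q\times\hat V}\bigl\|\lambda_{h_n(\xi')^{-1}}\psi(q',\xi')-\psi(q',\xi')\bigr\|_{L^2(G)}^2\,\frac{d_Q(q')}{|q'|}\,d\xi'.
\]
Since $h_n(\xi')^{-1}\to1$ in $Q\subset G$ and the left regular representation of $G$ is strongly continuous, the integrand tends to $0$ for a.e.\ $(q',\xi')$; it is dominated by the integrable function $4\|\psi(q',\xi')\|_{L^2(G)}^2$, so dominated convergence yields $\|U_{\Xi_{z_n^{-1}}}\psi-\psi\|\to0$, completing the proof.

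The one step requiring genuine care — and the main obstacle — is this passage from the elementary pointwise convergence of the maps $\Xi_{z_n^{-1}}$ to strong convergence of the associated unitaries; the disintegration above is what makes it work, since on each fibre one only sees left translation by $h_n(\xi')^{-1}\to1$, and it is precisely there that the openness of $\phi$ (continuity of $\phi^{-1}$ at $\xi_0$) and the centrality of the $z_n$ are used essentially.
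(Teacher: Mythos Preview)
Your proof is correct and follows essentially the same route as the paper: rewrite $\Omega_{z_n^{-1}}$ via Proposition~\ref{prop:cont} as $(\CF_V^*\otimes1)U_{\Xi_{z_n^{-1}}}(\CF_V\otimes1)$, use centrality of $z_n$ to simplify $\Xi_{z_n^{-1}}$ to $(q',\xi',g)\mapsto(q',\xi',\phi^{-1}(\xi_0+z_n^\flat\xi')g)$, and then invoke openness of $\phi$ together with $z_n^\flat\to0$ to get $\phi^{-1}(\xi_0+z_n^\flat\xi')\to e$. The paper's proof is terser --- it simply asserts that it suffices to have $\Xi_{z_n^{-1}}\to\id$ a.e.\ --- whereas you supply the missing justification by disintegrating over $Q\times\hat V$ and applying dominated convergence to the fibrewise left translations; this is a welcome clarification of a step the paper leaves implicit.
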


\bp
Using the notation from the proof of the previous proposition it suffices to show that $\Xi_{z_n^{-1}}\to \id$ a.e., or equivalently,
$\phi^{-1}(\xi_0+z_n^\flat\xi)\to e$ for a.e.~$\xi\in\hat V$.  But this is clear, since by assumption the image of $\phi$ contains a neighbourhood of $\xi_0$ and $\phi$ is a homeomorphism of $Q$ onto its image.
\ep

\begin{example}\label{ex:ax+b}
Consider the $ax+b$ group $G$ over the reals, so that $Q=\R^*$, $V=\R$, and $Q$ acts on $V$ by multiplication. In other words, $G$ is the group of matrices
$$
\left\{\begin{pmatrix}a & b\\ 0 & 1 \end{pmatrix}\mid a\in\R^*,\ b\in\R\right\}.
$$
We identify $\hat \R$ with $\R$ via the pairing $e^{ixy}$. Then $s^\flat t=s^{-1}t$ for $s\in Q$ and $t\in\hat V$.
Take $-1$ as~$\xi_0$. We then get a continuous family of cohomologous dual unitary $2$-cocycles $\Omega_\theta$, $\theta\in\R^*$, on~$G$ such that
$$
\Omega_\theta=(\CF_\R^*\otimes1)U_{\Xi_\theta}(\CF_\R\otimes1),
$$
where $\Xi_\theta\colon \R^*\times \R\times G\to \R^*\times\R\times G$ is defined by
$$
\Xi_\theta(s,t,g):=\big(s,t,\begin{pmatrix}(1-\theta t)^{-1} & 0\\ 0 & 1 \end{pmatrix}g\big).
$$
In this case the pointwise convergence $\theta^\flat\to0$ in $\End(\hat V)$ means that $\theta^{-1}\to0$ in $\R$. So by the above proposition we have $\Omega_\theta\to 1$ as $\theta\to0$, which is obviously the case.
\end{example}

\subsection{Multiplicative unitaries}\label{ss:mult}

Our next goal is to find an explicit formula for the multiplicative unitary of the twisted quantum group $(W^*(G),\Omega\Dhat(\cdot)\Omega^*)$ for the dual cocycle $\Omega$ defined by~\eqref{2CC}. The main issue is to determine the modular conjugation $\tilde J$ for the canonical weight~$\tilde\varphi$ on $W^*(\hat G;\Omega)$.

\smallskip

It is more convenient to work with the isomorphic Galois object $(L^\star(L^2(G))'',\Ad\lambda)$. The algebra $(L^2(G),\star)$ becomes a $*$-algebra if we transport the $*$-structure on $\HS(L^2(V))$ to $L^2(G)$. Since the $*$-structure on the algebra of Hilbert--Schmidt operators is isometric, the corresponding $*$-structure on $L^2(G)$ must have the form $f\mapsto\U Jf$ for a unitary operator $\U$ on $L^2(G)$. In other words, the operator $\U$ is defined by the identity
$$
\Op(f)^*=\Op(\U Jf).
$$

\begin{lemma}\label{lem:U}
The operator $\U$ is given by
$$
\U=U_\phi\CF_{V}^*\hat W_{\hat V}^*\CF_VU_\phi^*,
$$
where $U_\phi$ is the operator defined by~\eqref{UPHI}.
\end{lemma}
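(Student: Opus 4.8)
The plan is to compute $\U$ directly from its defining identity $\Op(f)^* = \Op(\U J f)$ by tracking what the adjoint operation does to the Kohn--Nirenberg symbol, and then transporting everything to $L^2(G)$ via the unitary $\tilde U_\phi$ (equivalently $U_\phi$). Since $\Op(f) = \OKN(\tilde U_\phi^* f)$ by~\eqref{Op}, it suffices to understand the symbol of $\OKN(F)^*$ in terms of $F$; the result will then be conjugated by $\tilde U_\phi$ (or rather rewritten in terms of $U_\phi$ after accounting for the swap of the two variables relating $\tilde U_\phi$ to $U_\phi$).

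First I would recall from~\eqref{KO} that $\OKN(F)$ has distributional kernel $(v,v')\mapsto\big((1\otimes\CF_V^*)F\big)(v',v-v')$. Taking the Hilbert--Schmidt adjoint replaces the kernel $K(v,v')$ by $\overline{K(v',v)}$, so the symbol $F$ of $\OKN(F)$ gets sent to the symbol $F^\dagger$ of $\OKN(F)^*$, where $\big((1\otimes\CF_V^*)F^\dagger\big)(v,v'-v) = \overline{\big((1\otimes\CF_V^*)F\big)(v',v'-v)}$. Writing this out, substituting $v=v'-w$, and tracking the complex conjugate through the inverse Fourier transform, one finds (after the change of variables and using $\overline{e^{i\langle\xi,w\rangle}} = e^{-i\langle\xi,w\rangle}$) that $F^\dagger(v',\xi) = \overline{(\text{shifted and Fourier-reflected } F)}$. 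The key computation is to see that this operation, in the variables $(q,\xi)$ after identifying $L^2(V\times\hat V)\cong L^2(G)$ via $\tilde U_\phi$, is exactly complex conjugation composed with $\CF_V^*\hat W_{\hat V}^*\CF_V$ acting in the $\hat V$-slot. Here $\hat W_{\hat V}$ is the multiplicative unitary of the \emph{abelian} group $\hat V$, whose action $(\hat W_{\hat V}g)(\xi,\eta) = g(\eta+\xi,\eta)$ (per the Preliminaries) is precisely the kind of "shear" that appears when one swaps kernel variables in the Kohn--Nirenberg picture.

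The cleanest route is probably to avoid hand-to-hand combat with kernels and instead use the equivariance already available: Lemma~\ref{lem:equivariance} tells us $\OKN$ intertwines $\pi_{V\times\hat V}$ and $\Ad\pi_V$, and taking adjoints shows that $f\mapsto\U Jf$ must be $G$-equivariant, i.e.\ $\U J$ commutes with $\lambda_g$ (since $(\Ad\pi(g))(\Op(f))^* = (\Ad\pi(g))(\Op(f)^*)$). Combined with the fact that $\U J$ is a conjugate-linear involution, this constrains $\U$ considerably; one then pins down $\U$ completely by testing on a convenient dense family of symbols (e.g.\ rank-one operators $\theta_{\vf,\psi}$, whose KN symbols are explicit), reducing the identification of $\U$ to the single computation that the adjoint of the rank-one operator with symbol $F_{\vf,\psi}$ has symbol obtained from $F_{\vf,\psi}$ by $\overline{(\cdot)}$ and the $\hat V$-shear $\hat W_{\hat V}^*$. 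Either way, the bookkeeping for the change of variables on $V$ and the placement of Fourier transforms is the only real content.

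\textbf{Main obstacle.} The genuine difficulty is purely notational: correctly threading the partial Fourier transform $\CF_V$, the reparametrization $U_\phi$ (vs.\ $\tilde U_\phi$, which differs by a permutation of variables), and the conjugate-linearity of $J$ so that the shear lands as $\hat W_{\hat V}^*$ and not $\hat W_{\hat V}$ or $W_{\hat V}$, and in the correct leg. The conceptual part --- that adjunction of KN operators is a Fourier-shear on the symbol --- is classical (this is the statement that the KN symbol of $A^*$ is a "flipped" symbol, the twisted convolution showing up as a multiplicative unitary of $\hat V$); but getting every sign and every $|q|^{1/2}$ factor right, and verifying the resulting formula is indeed unitary on $L^2(G)$, is where care is required. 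I would double-check the final formula by confirming that $\U$ is unitary (which follows since $\hat W_{\hat V}$ is) and that $(\U J)^2 = 1$, i.e.\ $\U\bar\U = 1$, which should drop out of $\hat W_{\hat V}^*\overline{\hat W_{\hat V}^*} = \hat W_{\hat V}^* \hat W_{\hat V}^{\mathrm{t}} \cdots$ --- in fact more simply from $(\Op(f)^*)^* = \Op(f)$.
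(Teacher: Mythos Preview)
Your proposal is correct and follows essentially the same approach as the paper: compute the effect of Hilbert--Schmidt adjunction on the Kohn--Nirenberg kernel~\eqref{KO}, recognize the resulting ``shear plus conjugate'' as a multiplicative unitary of the abelian group, and then transport back to $L^2(G)$ via $U_\phi$. The paper organizes the bookkeeping slightly differently---it first identifies the involution on $L^2(V\times V)$ as $(J_V\otimes\hat J_V)\hat W_V^*$ (the multiplicative unitary of $V$, not $\hat V$) and only at the last step converts via $\hat W_V=\Sigma(\CF_V^*\otimes\CF_V^*)\hat W_{\hat V}^*(\CF_V\otimes\CF_V)\Sigma$ and $U_\phi=\tilde U_\phi\Sigma$---but the content is the same. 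Your alternative route through equivariance and rank-one operators would also work, though as you acknowledge it does not actually bypass the kernel computation; it merely restricts it to a dense family.
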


\bp
Consider the unitary $\widetilde{\Op}\colon L^2(V\times V)\to \HS(L^2(V))$ defined by
$$
\widetilde{\Op}(f)=\OKN((1\otimes\CF_V)f).
$$
Then by definition $\widetilde{\Op}(f)$ is the integral operator with kernel $(v,v')\mapsto f(v',v-v')$, hence its adjoint has the kernel $(v,v')\mapsto\overline{f(v,v'-v)}$. In other words,
$$
\widetilde{\Op}(f)^*=\widetilde{\Op}(f^\#),
$$
where $f^\#(v,v')=\overline{f(v+v',-v')}$. Since
$$
f^\#=(J_V\otimes\hat J_V)\hat W_V^*f,
$$
we conclude that
$$
\U J=\tilde U_\phi(1\otimes\CF_V)(J_V\otimes\hat J_V)\hat W_V^*(1\otimes\CF_V^*)\tilde U_\phi^*.
$$
Using that
$$
(J_V\otimes \hat J_V)\hat W_V^*=\hat W_V (J_V\otimes \hat J_V) \quad\mbox{and}\quad \hat J_V \CF_V^*=  \CF_V^*J_{\hat V},
$$
we arrive at
$$
\U J=\tilde U_\phi(1\otimes\CF_V)\hat W_V(1\otimes\CF_V^*)\tilde U_\phi^*J.
$$
Finally, using that $U_\phi=\tilde U_\phi\Sigma$ and
$$
\hat W_V=\Sigma(\CF_V^*\otimes\CF_V^*)\hat W^*_{\hat V}(\CF_V\otimes\CF_V)\Sigma,
$$
we get the desired formula.
\ep

\begin{proposition}
The modular conjugation for the canonical weight $\tilde\varphi$ on the Galois object $W^*(\hat G;\Omega)$ (with respect to the GNS-map~\eqref{eq:GNS0}) is $\tilde J=\CJ\U J\CJ$.
\end{proposition}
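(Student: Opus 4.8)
The plan is to compute the modular conjugation for $\tilde\varphi$ in the ``quantization'' model attached to the GNS-map~\eqref{eq:GNS}, where it comes out essentially for free, and then to transport the answer to the GNS-map~\eqref{eq:GNS0}; the two turn out to be related by conjugation by $\CJ$, which produces the asserted formula.

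First I would recall from Section~\ref{ss:HG} that $\tilde\varphi=\Tr(K^{1/2}\cdot K^{1/2})$, that in the Hilbert--Schmidt model its GNS-space is $\HS(H)$ with $H=L^{2}(V)$, the GNS-map is $a\mapsto aK^{1/2}$, the representation is left multiplication, and the modular automorphism group is $a\mapsto K^{it}aK^{-it}$. A direct Tomita--Takesaki computation then shows that in this model the modular conjugation is $T\mapsto T^{*}$. Conjugating by the unitary $\Op\colon L^{2}(G)\to\HS(H)$ of~\eqref{Op} and using~\eqref{eq:GNS}, under which $\tilde\Lambda=\Op^{-1}(\,\cdot\,K^{1/2})$ and $B(H)$ is realized on $L^{2}(G)$ as $L^{\star}(L^{2}(G))''$ with $\Op(g)$ corresponding to $L^{\star}(g)$ (cf.~\eqref{eq:Galois-star}), one gets that the modular conjugation for $\tilde\varphi$ with respect to $\tilde\Lambda$ is $f\mapsto\Op^{-1}(\Op(f)^{*})$, which by the defining identity $\Op(f)^{*}=\Op(\U Jf)$ of $\U$ (see before Lemma~\ref{lem:U}) equals $\U J$.

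Next I would check that $\tilde\Lambda_{\Omega}$ and $\tilde\Lambda$ are intertwined by $\CJ$, in the sense that $\tilde\Lambda_{\Omega}(\pi_{\Omega}(f))=\CJ\,\tilde\Lambda(\Op(T_{-1/2}f))$ for $f$ ranging over a core. Here one first records the elementary identity $\Op(\Delta^{-1/2})=K^{1/2}$, which follows by writing out $\Op(\Delta^{-1/2})=\OKN(\tilde U_{\phi}^{*}\Delta^{-1/2})=\CF_{V}^{*}M(\Delta^{-1/2}\circ\phi^{-1})\CF_{V}$ and comparing with Lemma~\ref{lem:DM}. Then, for $f\in A(G)\cap\CF\CL_{0}(G)$, the element $\pi_{\Omega}(f)=\CJ L^{\star}(T_{-1/2}f)\CJ$ of~\eqref{eq:Omega-L} corresponds under the identification above to $\Op(T_{-1/2}f)$, so $\tilde\Lambda(\Op(T_{-1/2}f))=\Op^{-1}(\Op(T_{-1/2}f)K^{1/2})=(T_{-1/2}f)\star\Delta^{-1/2}=\Delta^{-1/2}f$, the last step using the identity $g\star\Delta^{s}=\Delta^{s}T_{-s}g$ from Proposition~\ref{prop:chi-star} together with $T_{1/2}T_{-1/2}=\id$. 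On the other hand $\tilde\Lambda_{\Omega}(\pi_{\Omega}(f))=\check f$ by~\eqref{eq:GNS0}, and $\CJ(\Delta^{-1/2}f)=\check f$ directly from $\CJ g=\Delta^{-1/2}\check g$ (see~\eqref{J}); this establishes the intertwining. Since the elements $\pi_{\Omega}(f)$, $f\in A(G)\cap\CF\CL_{0}(G)$, span a core for $\tilde\varphi$ by Lemma~\ref{lem:fl-dense}, and conjugating the GNS-representation by a unitary conjugates the modular conjugation by that unitary, it follows that $\tilde J=\CJ(\U J)\CJ^{-1}=\CJ\U J\CJ$, using $\CJ^{2}=1$.

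I expect the only genuinely delicate point to be making the interchange $\Op(T_{-1/2}f)K^{1/2}=\Op(\Delta^{-1/2}f)$ of (possibly unbounded) operators rigorous and checking that the relevant elements lie in $\mathfrak N_{\tilde\varphi}$ and span a core --- exactly the ``grain of salt'' flagged in Remark~\ref{rem:two-star}. In the present setting this has essentially been prepared: $\CF\CL_{0}(G)$ is stable under the $T_{z}$ and under multiplication by powers of $\Delta$, Proposition~\ref{prop:chi-star} controls the commutation of $M(\Delta^{s})$ with $L^{\star}$, and Lemma~\ref{lem:fl-dense} supplies the density. So the main obstacle is careful bookkeeping rather than a conceptual one.
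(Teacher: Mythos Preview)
Your proposal is correct and follows essentially the same route as the paper: compute the modular conjugation in the $L^\star$/$\Op$ model (where it is $\U J$ coming from the Hilbert--Schmidt involution), then transport to the $\pi_\Omega$ model via the isomorphism $\Ad\CJ$ by verifying that the two GNS-maps are intertwined by $\CJ$ on the dense subspace $A(G)\cap\CF\CL_0(G)$. The only notable difference is bookkeeping of the normalization constant: the paper allows a priori that $M(\Delta^{it})$ corresponds to $c^{it}K^{-it}$ for some unknown $c>0$ and deduces $c=1$ at the very end by comparing norms of $\tilde\Lambda_\Omega(\pi_\Omega(f))$ and $\CJ\tilde\Lambda_L(\CJ\pi_\Omega(f)\CJ)$, whereas you shortcut this by directly reading off $\Op(\Delta^{-1/2})=K^{1/2}$ from the explicit formula for $K$ in Lemma~\ref{lem:DM}; both are fine, and the delicate domain issues you flag are exactly those the paper handles via Proposition~\ref{prop:chi-star} and Lemma~\ref{lem:fl-dense}.
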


\bp The proof is similar to that of~\cite[Proposition~2.8]{BGNT1}.
Let us start with the canonical weight~$\tilde\varphi_L$ for the Galois object $(L^\star(L^2(G))'',\Ad\lambda)$. Note that since $M(\Delta)$ is affiliated with $L^\star(L^2(G))''$ and the function $\Delta$ is, up to a scalar factor, the only positive measurable function~$F$ on~$G$ such that $\lambda_g F=\Delta(g)^{-1}F$, the isomorphism $L^\star(L^2(G))''\cong B(L^2(V))$, $L^\star(f)\mapsto\Op(f)$, must map $M(\Delta^{it})$ ($t\in\R$) to $c^{it} K^{-it}$ for some $c>0$, where $K$ is the Duflo--Moore operator of formal degree (explicitly given by Lemma~\ref{lem:DM}). We have densely defined operators on $\HS(L^2(V))$ of multiplication on the right by $c^zK^{-z}$ ($z\in\C$). Correspondingly, we have densely defined operators on $L^2(G)$, which we suggestively denote by $f\mapsto f\star\Delta^z$. Thus, by definition,
$$
L^\star(f\star\Delta^z)=L^\star(f)M(\Delta^z)
$$
for $f$ in a dense subspace of $L^2(G)$. Explicitly, by Proposition~\ref{prop:chi-star} we have
$$
f\star\Delta^z=\Delta^z T_{-z}f\ \ \text{for}\ \ f\in\CF\CL_0(G).
$$

Recalling the description of the GNS-representation for $(B(L^2(V)),\Ad\pi)$ in Section~\ref{ss:dual-cocycles}, and formula~\eqref{eq:GNS} in particular, we see that as the GNS-space for $\tilde\varphi_L$ we can take $L^2(G)$, with the GNS-map $\tilde\Lambda_L\colon{\mathfrak N}_{\tilde\varphi_L}\to L^2(G)$ uniquely determined by
$$
\tilde\Lambda_L(L^\star(f))=c^{1/2}f\star\Delta^{-1/2}
$$
for $f\in L^2(G)$ such that the right hand side is well-defined. The corresponding modular conjugation $\tilde J_L$ is simply given by the involution on $L^2(G)\cong\HS(L^2(V))$, so $\tilde J_L=\U J$.

Now, using the isomorphism $\Ad\mathcal J$ between $W^*(\hat G;\Omega)$ and $L^{\star }(L^2(G))''$, we can consider the space $L^2(G)$ as the GNS-space for $\tilde\varphi$ using the map
\begin{equation}\label{eq:tildelambda2}
\mathfrak N_{\tilde\varphi}\ni x\mapsto \mathcal J\tilde\Lambda_L(\mathcal Jx\mathcal J).
\end{equation}
In this picture the modular conjugation for $\tilde\varphi$ is $\mathcal J\mathcal UJ \mathcal J$. Therefore we only have to check that the above GNS-map is exactly the map $\tilde\Lambda$ used to define $\tilde J$.

Recall that $\tilde\Lambda$ is given by
$$
\tilde\Lambda(\pi_\Omega(f))=\check{f}=\mathcal J\Delta^{-1/2}f
$$
for all $f\in A(G)\cap \Delta^{1/2}L^2(G)$.
Since by~\eqref{eq:Omega-L} we have
$$
\mathcal J\pi_\Omega(f)\mathcal J=L^{\star }(T_{-1/2} f)=L^\star((\Delta^{-1/2}f)\star\Delta^{1/2})
$$
for $f\in A(G)\cap\CF\CL_0(G)$, we see that
$$
\CJ\tilde\Lambda_L(\CJ\pi_\Omega(f)\CJ)=c^{1/2}\tilde\Lambda(\pi_\Omega(f))
$$
for all such $f$. By comparing the norms of both sides we can already conclude that $c=1$. Then, since any two GNS-representations associated with $\tilde\varphi$ are unitary conjugate and the vectors $\tilde\Lambda(\pi_\Omega(f))=\check f$, with $f\in A(G)\cap\CF\CL_0(G)$, form a dense subspace of $L^2(G)$, it follows that the maps $\tilde\Lambda$ and~\eqref{eq:tildelambda2} are equal.
\ep

As was shown in~\cite{DC}, the unitary operator $\tilde J J$ must belong to $W^*(G)$.  The following makes this explicit.
\begin{lemma}
\label{UX}
For any $\vf\in C_c(G)$ and $g\in G$ we have:
$$
(\mathcal J \mathcal U\mathcal J \vf)(g)
=\int_{G} e^{i\langle q^\flat \xi_0-\xi_0,v\rangle}\,(\lambda_{(q,v)}\vf)(g)\, \Delta(q)^{-1/2}\,d(q,v).
$$
\end{lemma}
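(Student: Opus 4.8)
The plan is to combine Lemma~\ref{lem:U} with an explicit computation of the operator $\U$ on $C_c(G)$, and then conjugate by $\CJ$. Recall that $\CJ f=\Delta^{-1/2}\check f$, that is, $(\CJ f)(g)=\Delta(g)^{-1/2}f(g^{-1})$, and that $\Delta(q,v)=\Delta_Q(q)/|q|$ depends only on $q$; since $\CJ$ maps $C_c(G)$ onto itself, it suffices to prove the stated identity for $\vf\in C_c(G)$, and for such $\vf$ the right-hand side is an absolutely convergent integral with compactly supported bounded integrand, hence a continuous function of $g$, so the pointwise statement makes sense.

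The first step is to show that for $\vf\in C_c(G)$ one has the absolutely convergent integral
\begin{equation*}
(\U\vf)(q,v)=\int_{Q\times V}e^{i\langle q^\flat\xi_0-p^\flat\xi_0,\,v-v'\rangle}\,\vf(p,v')\,\frac{d_Q(p)\,dv'}{|p|}.
\end{equation*}
This is obtained by tracing $\vf$ through the five factors of $\U=U_\phi\CF_V^*\hat W_{\hat V}^*\CF_VU_\phi^*$: $U_\phi^*$ replaces the $Q$-coordinate by a $\hat V$-coordinate via $\phi^{-1}$, the two partial Fourier transforms in the $V$-coordinate produce the phase $e^{i\langle\eta,v-v'\rangle}$, and $\hat W_{\hat V}^*$ shifts the first $\hat V$-coordinate by minus the second; after the substitution $\eta=\phi(q)-\phi(p)$, which by~\eqref{eq:dqdxi} sends $d\eta$ to $|p|^{-1}d_Q(p)$, one arrives at the displayed formula. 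All interchanges of integrals are justified by Fubini, since for $\vf\in C_c(G)$ the relevant integrand is supported in a fixed compact set and bounded; if desired one may first carry out the computation for $\vf$ in the dense subspace of functions whose partial Fourier transform lies in $C_c(Q)\otimes C_c(\hat V)$, exactly as in the proof of Lemma~\ref{KI2}, and then pass to the limit. The result is the analogue for $\U$ of the formula for $\star$ appearing in Lemma~\ref{KI}.

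Next I would conjugate. Writing $\psi:=\CJ\vf$, so that $\psi(q,v)=\Delta(q)^{-1/2}\vf(q^{-1},-q^{-1}v)$, I substitute $w=-p\,u$ (Jacobian $|p|$) in the inner $V$-integral of $(\U\psi)(q,v)$ and use $e^{i\langle\mu,p\,u\rangle}=e^{i\langle(p^{-1})^\flat\mu,u\rangle}$ together with $(p^{-1})^\flat q^\flat=(p^{-1}q)^\flat$ to get
\begin{equation*}
(\U\psi)(q,v)=\int_{Q\times V}e^{i\langle q^\flat\xi_0-p^\flat\xi_0,\,v\rangle}\,e^{i\langle(p^{-1}q)^\flat\xi_0-\xi_0,\,u\rangle}\,\Delta(p)^{-1/2}\,\vf(p^{-1},u)\,du\,d_Q(p).
\end{equation*}
Evaluating $\CJ\U\psi$ at $g=(q_0,v_0)$ amounts to evaluating $\U\psi$ at $(q_0^{-1},-q_0^{-1}v_0)$ and multiplying by $\Delta(q_0)^{-1/2}$; using once more $e^{i\langle\nu,-q_0^{-1}v_0\rangle}=e^{-i\langle q_0^\flat\nu,v_0\rangle}$ and $q_0^\flat(q_0^{-1})^\flat=\id$, the first phase turns into $e^{i\langle(q_0p)^\flat\xi_0-\xi_0,v_0\rangle}$. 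On the other side, in the claimed right-hand side I substitute $q=q_0p$ and $v=v_0-q\,u$, use the multiplicativity $\Delta(q_0p)=\Delta(q_0)\Delta(p)$, $|q_0p|=|q_0||p|$, left-invariance of $d_Q$, the expansion $(\lambda_{(q,v)}\vf)(q_0,v_0)=\vf(q^{-1}q_0,q^{-1}(v_0-v))$, and the same $\flat$-identities to rewrite $e^{i\langle q^\flat\xi_0-\xi_0,v\rangle}$; a short comparison shows the two expressions coincide.

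The only genuine obstacle is bookkeeping. One must choose the correct versions of the multiplicative unitary and of the Fourier transform ($\hat W_{\hat V}$ versus $\hat W_{\hat V}^*$, $\CF_V$ versus $\CF_V^*$) so that the signs in the exponents are right, keep careful track of the Jacobians (each substitution in $V$ or in $Q$ contributes a power of the modulus), and correctly propagate the dual action through the duality pairing using $e^{i\langle\xi,qv\rangle}=e^{i\langle(q^{-1})^\flat\xi,v\rangle}$ and $(q_1q_2)^\flat=q_1^\flat q_2^\flat$ at each step. None of this is conceptually hard, but it is easy to pick up a stray factor or sign, so I would perform each change of variables explicitly rather than invoke symmetry.
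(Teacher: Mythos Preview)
Your proposal is correct and follows essentially the same route as the paper: both unwind $\U=U_\phi\CF_V^*\hat W_{\hat V}^*\CF_VU_\phi^*$ on $C_c(G)$ to obtain the intermediate formula
\[
(\U\vf)(q,v)=\int_{Q\times V}e^{i\langle q^\flat\xi_0-p^\flat\xi_0,\,v-v'\rangle}\,\vf(p,v')\,\frac{d_Q(p)\,dv'}{|p|},
\]
and then conjugate by $\CJ$. The only difference is in this last step: the paper rewrites the integral above as $\int_G e^{i\langle q'^\flat\xi_0-\xi_0,v'\rangle}\Delta(q')^{-1/2}(\rho_{(q',v')}\vf)(q,v)\,d(q',v')$ and invokes the single identity $\CJ\rho_g\CJ=\lambda_g$, whereas you carry out the corresponding changes of variables by hand on both sides; your route is equivalent but slightly longer, and recognizing the right regular representation would let you skip the explicit bookkeeping you flag at the end.
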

\begin{proof}
We have, with absolutely convergent integrals:
\begin{align*}
(\mathcal U\vf)(q,v)&=(U_\phi\CF_{V}^*\hat W_{\hat V}^*U_\phi^*\CF_V\vf)(q,v)\\
&=\int_{\hat V} e^{i\langle\xi,v\rangle}\,(\hat W_{\hat V}^*U_\phi^*\CF_V\vf)(q^\flat\xi_0,\xi)\,d\xi\\
&=\int_{\hat V} e^{i\langle\xi,v\rangle}\,(U_\phi^*\CF_V\vf)(q^\flat\xi_0-\xi,\xi)\,d\xi\\
&=\int_{\hat V\times V} e^{i\langle\xi,v-v'\rangle}\,\vf\big(\phi^{-1}(q^\flat\xi_0-\xi),v'\big)\,d\xi\, dv'\\
&=\int_{Q\times V} e^{-i\langle q'^\flat\xi_0-q^\flat\xi_0,v-v'\rangle}\,\vf(q',v')\,\frac{d_Q(q')\, dv'}{|q'|}\\
&=\int_{G} e^{i\langle (qq')^\flat\xi_0-q^\flat\xi_0,qv'\rangle}\,\vf\big((q,v)(q',v')\big)\,d(q',v')\\
&=\int_{G} e^{i\langle q'^\flat\xi_0-\xi_0,v'\rangle}\,\Delta(q')^{-1/2}\,(\rho_{(q',v')}\vf)(q,v)\,d(q',v').
\end{align*}
As $\mathcal J \rho_g\mathcal J=\lambda_g$, applying this to $\CJ\vf$ instead of $\vf$ we get the announced formula.
\end{proof}

By \cite[Proposition 5.4]{DC}, we deduce that the multiplicative unitary $\hat W_\Omega$ for the deformed quantum group $(W^*(G),\Omega\Dhat(\cdot)\Omega^*)$ is given by the formula
\begin{equation} \label{eq:mult-unitary}
\hat W_\Omega
=(\CJ \U J\CJ\otimes\hat J)\,\Omega\,\hat W^*\,(J\otimes\hat J)\,\Omega^*.
\end{equation}

Conjugating by the partial Fourier transform, we get a more explicit formula:

\begin{theorem}\label{thm:mult-unitary}
Let $G=Q\ltimes V$ be a second countable locally compact group satisfying the dual orbit Assumption~\ref{Fro}, and $\Omega$ be the dual unitary $2$-cocycle defined by~\eqref{2CC}. Then for the multiplicative unitary $\hat W_\Omega$ of the deformed quantum group $(W^*(G),\Omega\Dhat(\cdot)\Omega^*)$ and any $f\in L^2(G\times G)$ we have
\begin{multline*}
\big((\CF_V\otimes\CF_V)\hat W_\Omega (\CF_V^*\otimes\CF_V^*)f\big)(q_1,\xi_1;q_2,\xi_2)=|\phi^{-1}(\phi(q_2^{-1})+\xi_1)|\\
\hspace{1,5cm}\times f\big(q_2q_1,q_2^\flat\xi_1;\phi^{-1}(\phi(q_2^{-1})+\xi_1)^{-1}\phi^{-1}(\xi_0+\xi_1),{\phi^{-1}(\phi(q_2^{-1})+\xi_1)^{-1}}^\flat
({q_2^{-1}}^\flat\xi_2-\xi_1)\big).
\end{multline*}
\end{theorem}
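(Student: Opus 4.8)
The plan is to evaluate the right-hand side of~\eqref{eq:mult-unitary} after conjugating by $\CF_V\otimes\CF_V$, by computing the action of each of the five factors $\CJ\U J\CJ\otimes\hat J$, $\Omega$, $\hat W^*$, $J\otimes\hat J$ and $\Omega^*$ on $L^2(Q\times\hat V)\bar\otimes L^2(Q\times\hat V)$, where each leg carries the measure $|q|^{-1}d_Q(q)\,d\xi$ that makes $\CF_V$ unitary, and then composing the results.

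First I would dispose of the two $\Omega$-factors using Lemma~\ref{lem2}: since $\Omega=(\CF_V^*\otimes1)U_\Xi(\CF_V\otimes1)$, conjugation by $\CF_V\otimes\CF_V$ turns $\Omega$ into $(1\otimes\CF_V)U_\Xi(1\otimes\CF_V^*)$, and a direct computation --- Fourier-transforming the $V$-variable of the $G$-leg, pushing it through the left translation $g\mapsto\phi^{-1}(\xi_0+\xi)g$ built into $\Xi$, and transforming back --- exhibits $\Omega$ (and similarly $\Omega^*$) in the partial Fourier picture as an explicit operator acting on the second leg by a translation of the $\hat V$-variable, a change $q\mapsto\phi^{-1}(\xi_0+\xi)q$ of the $Q$-variable, and multiplication by a Jacobian of the form $|\phi^{-1}(\xi_0+\xi)|$; here the Jacobian comes from~\eqref{eq:dqdxi} together with $|q^\flat|_{\hat V}=|q|_V^{-1}$.

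Next I would treat the remaining three factors along the same lines. The factor $\CJ\U J\CJ=(\CJ\U\CJ)J$, using that $J$ and $\hat J$ commute so that $\CJ J\CJ=J$, is made explicit by Lemma~\ref{UX}; carrying out the $v$-integral there produces a $\delta$-distribution that localizes $q$ at $\phi^{-1}(\xi_0+\xi)$, so in the partial Fourier picture $\CJ\U\CJ$ is again a translation/change-of-variable operator of exactly the type already encountered for $\Omega$, now decorated by a factor $\Delta(\phi^{-1}(\xi_0+\xi))^{-1/2}$. The modular conjugations $J$ and $\hat J$ become, after conjugation by $\CF_V$, explicit elementary (anti-)linear operators on $L^2(Q\times\hat V)$ built from complex conjugation, the sign flip $\xi\mapsto-\xi$, the inversion $q\mapsto q^{-1}$ and multiplication by a power of $\Delta$. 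Finally $\hat W^*$ is read off from $(\hat W f)(g,h)=f(hg,h)$ by Fourier-transforming the $V$-coordinates of $g$ and $h$, using that in coordinates $hg=(q_hq_g,v_h+q_hv_g)$; the formula from Proposition~\ref{prop:cont} for $\CF_V\lambda_g\CF_V^*$ is convenient here.

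The last step is to compose the five explicit operators and simplify. The main obstacle is bookkeeping rather than conceptual: one must chain together several changes of variables in $Q$ and in $\hat V$, keep track of all the moduli $|\cdot|_V$ appearing as Jacobians, and collect the various powers of $\Delta$ and $\Delta_Q$ coming from $\hat J$, from $\CJ\U J\CJ$, and from the two copies of $\Omega$. That these powers cancel in the end is forced by unitarity of $\hat W_\Omega$ and serves as a useful consistency check; the Jacobians that survive combine into the single factor $|\phi^{-1}(\phi(q_2^{-1})+\xi_1)|$ appearing in the statement, and isolating this expression together with the four arguments of $f$ on the right-hand side is where the computation has to be carried out with care.
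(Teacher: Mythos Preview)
Your proposal is correct and follows essentially the same route as the paper: decompose $\CJ\U J\CJ\otimes\hat J$ as $(\CJ\U\CJ\otimes1)(J\otimes\hat J)$, conjugate each of the five factors by $\CF_V\otimes\CF_V$ using Lemma~\ref{lem2} for $\Omega^{\pm1}$ and Lemma~\ref{UX} for $\CJ\U\CJ$, and then compose. One small slip in your description: in the partial Fourier picture $\Omega$ acts on the second leg not by a translation of the $\hat V$-variable but by the dual action $\xi_2\mapsto\phi^{-1}(\xi_0+\xi_1)^\flat\xi_2$ (together with $q_2\mapsto\phi^{-1}(\xi_0+\xi_1)q_2$ and the Jacobian $|\phi^{-1}(\xi_0+\xi_1)|^{-1}$), which is what the change of variable in $\Xi$ actually produces.
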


\begin{proof}
We know by Lemma \ref{lem2} that for $f\in L^2(Q\times\hat V\times G, |q_1|^{-1}d_Q(q_1)\,d\xi_1\, dg)$, we have
$$
((\CF_V\otimes 1)\Omega (\CF_V^*\otimes 1)f)(q_1,\xi_1;q_2,v_2)
=
f\big(q_1,\xi_1;\phi^{-1}(\xi_0+\xi_1)q_2,\phi^{-1}(\xi_0+\xi_1)v_2\big).
$$
From this we obtain, for $f\in L^2(G\times G)$:
$$
\big((\CF_V\otimes \CF_V)\Omega (\CF_V^*\otimes \CF_V^*)f\big)(q_1,\xi_1;q_2,\xi_2)
= |\phi^{-1}(\xi_0+\xi_1)|^{-1}
f\big(q_1,\xi_1;\phi^{-1}(\xi_0+\xi_1)q_2,\phi^{-1}(\xi_0+\xi_1)^\flat \xi_2\big).
$$
It follows that
\begin{multline*}
\big((\CF_V\otimes \CF_V)\Omega ^*(\CF_V^*\otimes \CF_V^*)f\big)(q_1,\xi_1;q_2,\xi_2)\\
= |\phi^{-1}(\xi_0+\xi_1)|
f\big(q_1,\xi_1;\phi^{-1}(\xi_0+\xi_1)^{-1}q_2,{\phi^{-1}(\xi_0+\xi_1)^{-1}}^\flat \xi_2\big).
\end{multline*}

On the other hand, with the help of Lemma \ref{UX} we can perform calculations similar to those of Lemma \ref{lem2}
to get, for $f\in L^2(Q\times\hat V, |q|^{-1}d_q(q)\,d\xi)$, that
$$
(\CF_V\CJ\CU\CJ\CF_V^*f)(q,\xi)=\frac{|\phi^{-1}(\xi_0+\xi)|^{3/2}}{\Delta_Q\big(\phi^{-1}(\xi_0+\xi)\big)^{1/2}}
f\big(\phi^{-1}(\xi_0+\xi)^{-1}q,{\phi^{-1}(\xi_0+\xi)^{-1}}^\flat\xi\big).
$$
Moreover, one easily finds that
\begin{align*}
(\CF_V J\CF_V^*\,f)(q,\xi)=\overline{f(q,-\xi)},\qquad
(\CF_V \hat J\CF_V^*\,f)(q,\xi)=\frac{|q|^{3/2}}{\Delta_Q(q)^{1/2}}\overline{f\big(q^{-1},{q^{-1}}^\flat\xi\big)},
\end{align*}
and
\begin{equation}\label{eq:mult-unitary2}
\big((\CF_V\otimes \CF_V)\hat W^* (\CF_V^*\otimes \CF_V^*)f\big)(q_1,\xi_1;q_2,\xi_2)
= |q_2| f\big(q_2^{-1}q_1,{q_2^{-1}}^\flat\xi_1;q_2,\xi_1+\xi_2\big).
\end{equation}
Hence we get

\smallskip

$\displaystyle
\big((\CF_V\otimes\CF_V)\hat W_\Omega (\CF_V^*\otimes\CF_V^*)f\big)(q_1,\xi_1;q_2,\xi_2)$
\begin{align*}
&=
\big((\CF_V\otimes\CF_V)(\CJ \U \CJ\otimes1)(J\otimes\hat J)\,\Omega\,\hat W^*\,(J\otimes\hat J)\,\Omega^* (\CF_V^*\otimes\CF_V^*)f\big)(q_1,\xi_1;q_2,\xi_2)\\
&=\frac{|\phi^{-1}(\xi_0+\xi_1)|^{3/2}}{\Delta_Q\big(\phi^{-1}(\xi_0+\xi_1)\big)^{1/2}}
\big((\CF_V\otimes\CF_V)(J\otimes\hat J)\,\Omega\,\hat W^*\,(J\otimes\hat J)\,\Omega^* (\CF_V^*\otimes\CF_V^*)f\big)\\
&\qquad\qquad\qquad\qquad \qquad\qquad\big(\phi^{-1}(\xi_0+\xi_1)^{-1}q_1,{\phi^{-1}(\xi_0+\xi_1)^{-1}}^\flat\xi_1;q_2,\xi_2\big)\\
&=\frac{|\phi^{-1}(\xi_0+\xi_1)|^{3/2}|q_2|^{3/2}}{\Delta_Q\big(\phi^{-1}(\xi_0+\xi_1)\big)^{1/2}\Delta_Q(q_2)^{1/2}}
\big((\CF_V\otimes\CF_V)\Omega\,\hat W^*\,(J\otimes\hat J)\,\Omega^* (\CF_V^*\otimes\CF_V^*)\bar f\big)\\
&\qquad\qquad\qquad\qquad\qquad\qquad {\big(\phi^{-1}(\xi_0+\xi_1)^{-1}q_1,-{\phi^{-1}(\xi_0+\xi_1)^{-1}}^\flat\xi_1;q_2^{-1},{q_2^{-1}}^\flat\xi_2\big)}.
\end{align*}
Using that
$$
\phi^{-1}(\xi_0-{\phi^{-1}(\xi_0+\xi_1)^{-1}}^\flat\xi_1)=\phi^{-1}(\xi_0+\xi_1)^{-1},
$$
the expression above becomes
\begin{align*}
&\frac{|\phi^{-1}(\xi_0+\xi_1)|^{5/2}|q_2|^{3/2}}{\Delta_Q\big(\phi^{-1}(\xi_0+\xi_1)\big)^{1/2}\Delta_Q(q_2)^{1/2}}
\big((\CF_V\otimes\CF_V)\hat W^*\,(J\otimes\hat J)\,\Omega^* (\CF_V^*\otimes\CF_V^*)\bar f)\\
&\qquad\qquad\big(\phi^{-1}(\xi_0+\xi_1)^{-1}q_1,-{\phi^{-1}(\xi_0+\xi_1)^{-1}}^\flat\xi_1;\phi^{-1}(\xi_0+\xi_1)^{-1}q_2^{-1},
{\phi^{-1}(\xi_0+\xi_1)^{-1}}^\flat{q_2^{-1}}^\flat\xi_2\big)\\
&=\frac{|\phi^{-1}(\xi_0+\xi_1)|^{3/2}|q_2|^{1/2}}{\Delta_Q\big(\phi^{-1}(\xi_0+\xi_1)\big)^{1/2}\Delta_Q(q_2)^{1/2}}
\big((\CF_V\otimes\CF_V)(J\otimes\hat J)\,\Omega^* (\CF_V^*\otimes\CF_V^*)\bar f\big)\\
&\qquad\qquad\big(q_2q_1,-q_2^\flat\xi_1;\phi^{-1}(\xi_0+\xi_1)^{-1}q_2^{-1},
{\phi^{-1}(\xi_0+\xi_1)^{-1}}^\flat({q_2^{-1}}^\flat\xi_2-\xi_1)\big)\\
&=\frac1{|q_2|}
\big((\CF_V\otimes\CF_V)\Omega^* (\CF_V^*\otimes\CF_V^*)f\big)\big(q_2q_1,q_2^\flat\xi_1;q_2\phi^{-1}(\xi_0+\xi_1),
\xi_2-q_2^\flat\xi_1\big)\\
&=|\phi^{-1}(\phi(q_2^{-1})+\xi_1)|\\
&\qquad\qquad f\big(q_2q_1,q_2^\flat\xi_1;\phi^{-1}(\phi(q_2^{-1})+\xi_1)^{-1}\phi^{-1}(\xi_0+\xi_1),{\phi^{-1}(\phi(q_2^{-1})+\xi_1)^{-1}}^\flat
({q_2^{-1}}^\flat\xi_2-\xi_1)\big),
\end{align*}
which is what we need.
\end{proof}

Recall that in Section~\ref{ss:deform} we considered a continuous family of cohomologous dual unitary $2$-cocycles $\Omega_q$, $q\in Q$, defined by replacing $\xi_0$ by $\xi_0^q=q^\flat\xi_0$.

\begin{corollary}
We have:
\begin{enumerate}
\item[(i)] the map $Q\ni q\mapsto \hat W_{\Omega_q}$ is $so$-continuous;
\item[(ii)] if $\phi\colon Q\to\hat V$ is open and $z_n^\flat\to0$ in $\End(\hat V)$ pointwise for a sequence of elements $z_n\in Z(Q)$, then $\hat W_{\Omega_{z_n^{-1}}}\to\hat W$ in the $so$-topology.
\end{enumerate}
\end{corollary}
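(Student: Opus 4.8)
Both parts are read off the formula for $\hat W_{\Omega_q}$, recalling that passing from $\Omega$ to $\Omega_q$ amounts to replacing $\xi_0$ by $\xi^q_0=q^\flat\xi_0$, hence $\phi$ by $\phi_q=\phi(\,\cdot\,q)$, everywhere in Section~\ref{sec:KN}.

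For (i) the plan is to apply formula~\eqref{eq:mult-unitary}, which for $\Omega_q$ takes the form
$$
\hat W_{\Omega_q}=(\CJ\,\U_q\,J\CJ\otimes\hat J)\,\Omega_q\,\hat W^*\,(J\otimes\hat J)\,\Omega_q^* ,
$$
where $\U_q$ is the operator produced by Lemma~\ref{lem:U} from $\xi^q_0$; it is of the form $U_{\phi_q}\,X\,U_{\phi_q}^*$ with $X$ a fixed unitary built from $\CF_V$ and $\hat W_{\hat V}$, and $(U_{\phi_q}f)(q',v)=f(\phi(q'q),v)$. Since $\phi$ and the multiplication of $Q$ are continuous, a routine continuity-of-translation argument shows that $q\mapsto U_{\phi_q}$ is so-continuous; being unitaries, the $U_{\phi_q}^*$ are then so-continuous in $q$ as well, hence so is $q\mapsto\U_q$. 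Combined with Proposition~\ref{prop:cont} (so-continuity of $q\mapsto\Omega_q$, hence of $q\mapsto\Omega_q^*$) and with the joint so-continuity of operator multiplication on bounded sets, the displayed product is so-continuous in $q$. This proves (i); note that openness of $\phi$ is not used here, only its continuity.

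For (ii) I would instead use the explicit formula of Theorem~\ref{thm:mult-unitary}. Conjugating by $\CF_V\otimes\CF_V$ and substituting $\phi_{z_n^{-1}}$ and $\xi_0^{z_n^{-1}}$, one expresses $(\CF_V\otimes\CF_V)\hat W_{\Omega_{z_n^{-1}}}(\CF_V^*\otimes\CF_V^*)$ as a multiplier times a measurable point transformation of $Q\times\hat V\times Q\times\hat V$. Because $z_n\in Z(Q)$, the conjugations by $z_n$ occurring in it trivialise, just as in the proof of Proposition~\ref{prop:1-deform} where $z_n^{-1}\phi^{-1}(\xi_0+z_n^\flat\xi)z_n=\phi^{-1}(\xi_0+z_n^\flat\xi)$; the remaining $z_n$-dependence enters only through blocks of the type $\phi^{-1}(\xi_0+z_n^\flat\xi)$, $\phi^{-1}(\phi(q_2^{-1})+\xi_1)z_n$, $|\phi^{-1}(\cdots)|$ and the dual actions attached to them. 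Using $z_n^\flat\to0$ pointwise together with the hypothesis that $\phi$ is open, so that $\phi$ is a homeomorphism onto its image and $\phi^{-1}$ is continuous, one checks that each of these blocks converges, for a.e.\ $(q_1,\xi_1,q_2,\xi_2)$, to the corresponding block of the formula for $\hat W$ obtained by inverting~\eqref{eq:mult-unitary2}, namely $(\CF_V\otimes\CF_V)\hat W(\CF_V^*\otimes\CF_V^*)f(q_1,\xi_1;q_2,\xi_2)=|q_2|^{-1}f(q_2q_1,q_2^\flat\xi_1;q_2,\xi_2-q_2^\flat\xi_1)$; the computation is completely explicit for the $ax+b$ group of Example~\ref{ex:ax+b}, where as $z_n^\flat\to0$ the four arguments of $f$ tend to $q_2q_1$, $q_2^\flat\xi_1$, $q_2$, $\xi_2-q_2^\flat\xi_1$ and the multiplier tends to $|q_2|^{-1}$. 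Thus, for $f$ in a dense subspace, $(\CF_V\otimes\CF_V)\hat W_{\Omega_{z_n^{-1}}}(\CF_V^*\otimes\CF_V^*)f\to(\CF_V\otimes\CF_V)\hat W(\CF_V^*\otimes\CF_V^*)f$ almost everywhere; since all the operators involved are isometries, a.e.\ convergence together with equality of the $L^2$-norms upgrades to convergence in $L^2$, and then, by uniform boundedness, $\hat W_{\Omega_{z_n^{-1}}}\to\hat W$ in the so-topology.

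The step I expect to be the main obstacle is the bookkeeping in (ii): one must check, block by block, that the point transformation of Theorem~\ref{thm:mult-unitary} degenerates precisely to the pentagonal transformation underlying $\hat W$ as $z_n^\flat\to0$, carefully tracking $\phi$, $\phi^{-1}$, the dual action $\flat$ and the moduli $|\cdot|$. Centrality of $z_n$ and openness of $\phi$ are exactly what make this possible, as already in Proposition~\ref{prop:1-deform}.
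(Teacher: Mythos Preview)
Your approach is essentially identical to the paper's: part (i) via formula~\eqref{eq:mult-unitary} together with so-continuity of $q\mapsto\Omega_q$ and $q\mapsto\U_q$, and part (ii) via the explicit point transformation of Theorem~\ref{thm:mult-unitary} degenerating to that of $\hat W$. The one technicality you gloss over, which the paper addresses explicitly, is that with the Haar measure on $Q$ normalized once and for all by~\eqref{eq:dqdxi} for $\xi_0$, the map $U_{\phi_q}$ is unitary only up to a (continuous in $q$) scalar factor, so one should write $\U_q=U_{\phi_q}\,X\,U_{\phi_q}^{-1}$ rather than $U_{\phi_q}\,X\,U_{\phi_q}^*$; this does not affect the argument.
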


\bp
Part (i) follows already from formula~\eqref{eq:mult-unitary}. Indeed, the map $q\mapsto\Omega_q$ is continuous by Proposition~\ref{prop:cont}. On the other hand, the unitary $\CU_q$ in formula~\eqref{eq:mult-unitary} for the dual cocycle $\Omega_q$ is given by Lemma~\ref{lem:U}, with $\xi_0$ replaced by $\xi^q_0$. To be more precise, that lemma is formulated under the assumption that the Haar measure on $Q$ is normalized by~\eqref{eq:dqdxi}. If we replace $\xi_0$ by $\xi^q_0=q^\flat\xi_0$, and, correspondingly, the map $\phi$ by $\phi_q(q')=\phi(q'q)$, but want to keep the same measure on $Q$, then the map
$$
 U_{\phi_q} :L^2(\hat V\times V)\to L^2(G),\quad
U_{\phi_q} f(q',v'):=f\big(\phi_q(q'),v' \big),
$$
is unitary only up to a scalar factor. But this means that for Lemma~\ref{lem:U} to remain true we just have to state it as the equality
$$
\U_q=U_{\phi_q}\CF_{V}^*\hat W_{\hat V}^*\CF_VU_{\phi_q}^{-1}.
$$
As the map $q\mapsto U_{\phi_q}$ is obviously continuous in the $so$-topology, we conclude that the map $q\mapsto\CU_q$ is continuous as well, hence so is the map $q\mapsto\hat W_{\Omega_q}$.

\smallskip

In order to prove (ii), recall that in the proof of Proposition~\ref{prop:1-deform} we already showed that if~$\phi$ is open, then $\phi^{-1}(\xi_0+z_n^\flat\xi)\to e$ as $z_n^\flat\to0$. It follows that, for all $q\in Q$,
$$
\phi^{-1}_{z_n^{-1}}({q^{-1}}^\flat\xi^{z_n^{-1}}_0+\xi)=q^{-1}\phi^{-1}(\xi_0+z_n^\flat\xi)\to q^{-1}.
$$
By Theorem~\ref{thm:mult-unitary} we then conclude that $(\CF_V\otimes\CF_V)\hat W_{\Omega_{z_n^{-1}}} (\CF_V^*\otimes\CF_V^*)\to Y$, where the operator~$Y$ is given by
$$
(Yf)(q_1,\xi_1;q_2,\xi_2)=|q_2|^{-1} f(q_2q_1,q_2^\flat\xi_1;q_2,\xi_2-q_2^\flat\xi_1).
$$
Since by~\eqref{eq:mult-unitary2} we have $Y=(\CF_V\otimes\CF_V)\hat W (\CF_V^*\otimes\CF_V^*)$, this proves the result.
\ep

\subsection{Stachura's dual cocycle}

In this section we consider the simplest example of our setup, the $ax+b$ group $G$ over the reals. In this case Stachura~\cite{Stachura} already defined a dual cocycle on~$G$. We refer the reader to his paper for a motivation of the construction and just present an explicit form of the cocycle.

Consider the following operators affiliated with $W^*(G)$:
$$
X:=i\frac d{dt}\lambda_{(e^{t},0)}\Big|_{t=0},\ \ Y:=i\frac d{dt}\lambda_{(1,t)}\Big|_{t=0},\ \ I:=\lambda_{(-1,0)}.
$$
Then the dual unitary cocycle on  $G$  found by Stachura, see \cite[Lemma 5.6]{Stachura}, is defined by
\begin{equation}\label{eq:Stachura}
\Omega_S:=\exp\big\{iX\otimes\log|1+Y|\big\}\,{\rm Ch}\big(1\otimes {\rm sgn}(1+Y), I\otimes 1\big),
\end{equation}
where ${\rm Ch}:\{-1,1\}\times\{-1,1\}\to \{-1,1\}$ is the unique nontrivial bicharacter.\footnote{In fact, Stachura works in a representation of $G$ equivalent to the regular representation. His operators $X,Y,I$ are the operators on $L^2(\R^*\times\R,q^{-2}dq\,d\xi)$ given by
$$
(X f)(q,\xi)=i(q\partial_q+\xi\partial_\xi)f(q,\xi),\quad (Y f)(q,\xi)=\xi f(q,\xi),\quad (I f)(q,\xi)= f(-q,-\xi).
$$
The equivalence is implemented by the unitary $(2\pi)^{-1/2}\CF_\R U\colon L^2(G)\to L^2(\R^*\times\R,q^{-2}dq\,d\xi)$, where $U\colon L^2(G)\to L^2(G)$ is defined by $(Uf)(q,v)= |q|f(q^{-1},v)$.
}

\begin{proposition} \label{prop:Stachura}
The dual cocycle $\Omega_S$ coincides with the dual cocycle $\overline{\Omega}$ defined by~\eqref{eq:anti-Omega} for $\xi_0=-1$. In particular, $\Omega_S$ is cohomologous to the dual cocycle $\Omega$ defined by~\eqref{2CC}.
\end{proposition}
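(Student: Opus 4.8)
The plan is to verify the operator identity $\Omega_S=\overline\Omega$ (with $\xi_0=-1$) in the left regular representation, which is faithful on $W^*(G)\bar\otimes W^*(G)$; the convenient model is the one in which the self-adjoint generator $Y$ of the translation subgroup is diagonalised, i.e.\ the picture obtained by conjugating the second leg by the partial Fourier transform $\CF_V$ (this is, up to the unitary $(2\pi)^{-1/2}\CF_\R U$ of the footnote, Stachura's own representation). Both $\Omega_S$ and $\overline\Omega$ carry their ``$Q$-part'' (namely $X$, $I$, respectively the dilations $\lambda_{(q,0)^{\pm1}}$) in the first leg and their ``$V$-part'' ($Y$, respectively the translations $\lambda_{(1,v)^{\pm1}}$) in the second. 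Hence in this model both become decomposable over $\mathrm{spec}\,Y\cong\hat V$, acting on each fibre $\{Y=\xi\}$ by a single element of the dilation group $\{(q,0):q\in Q\}$ on $L^2(G)$ (first leg); the proof reduces to showing the two fibrewise families of dilations coincide.

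First I would compute $\overline\Omega$. By Remark~\ref{BO} it suffices to compute $\Omega_{21}$ and conjugate by $J\otimes J$; alternatively one evaluates the oscillatory integral in~\eqref{eq:anti-Omega} directly on vectors $\vf_1\otimes\vf_2$ with $\vf_i\in C_c(G)$, exactly as in the proof of Lemma~\ref{lem2}. Conjugating the second leg by $\CF_V$ turns $\lambda_{(1,v)^{-1}}$ into multiplication by the character $e^{i\langle\cdot,v\rangle}$; carrying out the $v$-integral produces $\delta_{\hat V}\big(\phi(q)-\xi_0+\xi\big)$, which by the normalisation~\eqref{eq:dqdxi} collapses the $q$-integral and leaves, on the fibre $\{Y=\xi\}$, the single operator $\lambda_{(\phi^{-1}(\xi_0-\xi),0)^{-1}}$ on the first leg. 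For $G$ the $ax+b$ group and $\xi_0=-1$ one has $\phi(q)=-q^{-1}$, so $\overline\Omega$ is, in this model, a direct integral $\int^{\oplus}_{\hat\R}\lambda_{(\psi(\xi),0)}\,d\xi$ with $\psi(\xi)=1+\xi$; this is precisely the $ax+b$ specialisation of Lemma~\ref{lem2} already recorded in Example~\ref{ex:ax+b}, with legs interchanged.

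Next I would compute $\Omega_S$ in the same model. There $\log|1+Y|$ and $\mathrm{sgn}(1+Y)$ become multiplication by $\log|1+\xi|$ and $\mathrm{sgn}(1+\xi)$ in the second leg, so on the fibre $\{Y=\xi\}$ the first factor of~\eqref{eq:Stachura} is $\exp\{i\log|1+\xi|\,X\}$; since $X$ generates the dilation one-parameter group of the first leg (and commutes with everything in the second leg, so the functional calculus is genuinely fibrewise), this equals the dilation $\lambda_{(|1+\xi|,0)}$ up to inversion. The second factor of~\eqref{eq:Stachura}, on the same fibre, is $\mathrm{Ch}\big(\mathrm{sgn}(1+\xi),I\big)$, which equals $1$ when $\xi>-1$ and $I=\lambda_{(-1,0)}$ when $\xi<-1$, because $\mathrm{Ch}(1,\cdot)$ is trivial while $\mathrm{Ch}(-1,\cdot)$ is the identity character. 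Multiplying the two factors, the insertion of $\lambda_{(-1,0)}$ exactly when $1+\xi<0$ is precisely what removes the absolute value, turning $\lambda_{(|1+\xi|,0)}$ into $\lambda_{(1+\xi,0)}=\lambda_{(\psi(\xi),0)}$ uniformly in $\xi$ — this is the whole point of Stachura's construction, and the reason the nonconnected group (which is what contains $(-1,0)$) is indispensable. Hence $\Omega_S=\int^{\oplus}_{\hat\R}\lambda_{(\psi(\xi),0)}\,d\xi=\overline\Omega$. The ``in particular'' clause then follows at once from Remark~\ref{BO}, which asserts that $\overline\Omega$ and $\Omega$ are cohomologous (with implementing unitary $\tilde JJ$).

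The main obstacle is not conceptual but bookkeeping: one must track consistently the direction of $\CF_V$, the identification $\hat\R\cong\R$ via $e^{ixy}$, the choice $\xi_0=-1$, the sign in $e^{isX}$ versus $\lambda_{(e^{\pm s},0)}$, and — crucially — the $q\leftrightarrow q^{-1}$ flip hidden in the unitary $(2\pi)^{-1/2}\CF_\R U$ relating Stachura's original representation to the regular one (footnote), so that the two fibrewise dilations end up with the same exponent rather than reciprocal exponents. A secondary technical point is that $\Omega_S$ is a priori defined only through the functional calculus of the unbounded, non-commuting operators $X$ and $Y$, so the fibrewise description must be justified via the direct-integral decomposition of $L^2(G)$ over $\mathrm{spec}\,Y$, being careful about domains.
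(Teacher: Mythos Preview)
Your approach is sound in outline and genuinely different from the paper's. The paper does not diagonalise $Y$; instead it expands the bicharacter via $2\,\mathrm{Ch}(\eps_1,\eps_2)=1+\eps_1+\eps_2-\eps_1\eps_2$, writes $\Omega_S$ as a combination of $F_\eps(x,y)=e^{ix\ln|1+y|}\,\mathrm{sgn}(1+y)^\eps$ in the joint functional calculus of $(X\otimes1,1\otimes Y)$, computes $(\CF^*_{\R^2}F_\eps)(s,t)$ explicitly (the $x$-integral yields $\delta_0(s+\ln|1+y|)$, whose two zeros $y_\pm=\pm e^{-s}-1$ account for both connected components of $\R^*$), and finally changes variables $(s,t)\mapsto(q,v)$ to match the oscillatory integral~\eqref{eq:anti-Omega} term by term. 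Your fibrewise method bypasses this distributional computation and makes the role of the $\mathrm{Ch}$ factor completely transparent; the paper's route, conversely, stays entirely in the regular representation and requires no direct-integral justification.

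That said, your hedge ``up to inversion'' is precisely where the argument is not yet closed. With the stated convention $X:=i\frac{d}{dt}\lambda_{(e^t,0)}\big|_{t=0}$ one has $\lambda_{(e^t,0)}=e^{-itX}$, hence $\exp\{i\log|1+\xi|\,X\}=\lambda_{(|1+\xi|^{-1},0)}$, and after the $\mathrm{Ch}$ correction the fibre at $\xi$ carries $\lambda_{((1+\xi)^{-1},0)}$ --- the \emph{inverse} of the $\lambda_{(1+\xi,0)}$ you (correctly) obtained for $\overline\Omega$. So the bookkeeping you flagged as the main obstacle is exactly where your argument currently fails to match up; before the fibrewise identities agree you must locate the compensating inversion, whether in the sign conventions entering~\eqref{eq:Stachura}, in the identification with Stachura's model via the unitary $(2\pi)^{-1/2}\CF_\R U$ of the footnote (which contains a $q\mapsto q^{-1}$), or elsewhere.
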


\bp
Observe that
$$
 2{\rm Ch}(\eps_1,\eps_2)=1+\eps_1+\eps_2-\eps_1\eps_2.
$$
Hence, using that $ I\otimes 1$ commutes with $ X\otimes 1$ and $1\otimes Y$,  we get
\begin{multline*}
2\Omega_{S}=\exp\big\{i X\otimes\log|1+Y|\big\}\big(1+1\otimes {\rm sgn}(1+ Y)\big) \\
+\big(I\otimes 1\big) \exp\big\{iX\otimes\log|1+Y|\big\}\big(1-1\otimes {\rm sgn}(1+ Y)\big).
\end{multline*}
In terms of the functions $F_\eps:\R^2\to \T$, $\eps\in\{0,1\}$, defined by
$$
F_\eps(x,y):=\exp\{ix\ln|1+y|\}\,{\rm sgn}(1+y)^\eps,
$$
we therefore have
\begin{multline*}
 2\Omega_{S}=\big(F_0(X\otimes1,1\otimes Y)+F_1(X\otimes1,1\otimes Y)\big)\\
 + \big(I\otimes 1\big)\big(F_0(X\otimes1,1\otimes Y)-F_1(X\otimes1,1\otimes Y)\big).
\end{multline*}
Normalizing the $2$-dimensional Fourier transform by
$$
(\mathcal F_{\R^2} f)(s,t)=\frac1{2\pi}\int_{\R^2} e^{-i(xs+yt)}\;f(x,y)\;dx\,dy,
$$
we then obtain
\begin{align*}
\Omega_{S}&=\frac1{4\pi}\int_{\R^2}\big((\mathcal F_{\R^2}^*  F_0)(s,t)+(\mathcal F_{\R^2}^*  F_1)(s,t)\big)\;e^{-isX}\otimes e^{-itY}\,ds\,dt  \\
&\qquad\qquad+ \frac1{4\pi}\int_{\R^2}\big((\mathcal F_{\R^2}^*  F_0)(s,t)-(\mathcal F_{\R^2}^* F_1)(s,t)\big)\;Ie^{-isX}\otimes e^{-itY}\,ds\,dt.\\
&=\frac1{4\pi}\int_{\R^2}\big((\mathcal F_{\R^2}^*  F_0)(s,t)+(\mathcal F_{\R^2}^*  F_1)(s,t)\big)\;\lambda_{(e^{s},0)}\otimes \lambda_{(1,t)}\,ds\,dt  \\
&\qquad\qquad+ \frac1{4\pi}\int_{\R^2}\big((\mathcal F_{\R^2}^*  F_0)(s,t)-(\mathcal F_{\R^2}^* F_1)(s,t)\big)\;\lambda_{(-e^{s},0)}\otimes \lambda_{(1,t)}\,ds\,dt,
\end{align*}
with the integrals understood in the distributional sense.

We now need to compute the inverse Fourier transforms (in the sense of tempered distributions):
\begin{align*}
(\mathcal F_{\R^2}^*  F_\eps)(s,t)=\frac1{2\pi}\int_{\R^2} e^{i(xs+yt)}\;\exp\{ix\ln|1+y|\}\,{\rm sgn}(1+y)^\eps\;dx\,dy.
\end{align*}
Note first that
$$
\frac1{2\pi}\int_{\R} e^{ix(s+\ln|1+y|)}\;dx=\delta_0\big(s+\ln|1+y|\big).
$$
Consider now, for fixed $s\in\R$, the function $\varphi(y):= s+\ln|1+y|$. It  has two simple zeros located at $y_\pm=\pm e^{-s}-1$
and  it is continuously differentiable (away from $-1$) with $\varphi'(y_\pm)=\pm e^{s}$. Therefore
$$
\delta_0\big(s+\ln|1+y|\big)=|\varphi'(y_+)|^{-1}\delta_{y_+}(y)+|\varphi'(y_-)|^{-1}\delta_{y_-}(y)=e^{-s}\big(\delta_{-1+e^{-s}}(y)+\delta_{-1-e^{-s}}(y)\big).
$$
Hence we get
\begin{align*}
(\CF_{\R^2}^* F_\eps)(s,t)
&=e^{-s}\int_{\R} e^{i yt}\;\big(\delta_{-1+e^{-s}}(y)+\delta_{-1-e^{-s}}(y)\big)\,{\rm sgn}(1+y)^\eps\;dy\\
&=e^{-s-it}\big(e^{ie^{-s}t}+(-1)^\eps e^{-i e^{-s}t}\big).
\end{align*}
From this we obtain
$$
(\mathcal F_{\R^2}^* F_0)(s,t)+(\mathcal F_{\R^2}^* F_1)(s,t)=2e^{-s-it}e^{ie^{-s}t}\quad\mbox{and}\quad
(\mathcal F_{\R^2}^* F_0) (s,t)- (\mathcal F_{\R^2}^* F_1)(s,t)=2e^{-s-it}e^{-ie^{-s}t},
$$
and therefore
\begin{align*}
\Omega_{S}&=\frac1{2\pi}\int_{\R\times\R}e^{-s-it}\big(e^{ie^{-s}t}\;\lambda_{(e^{s},0)}+e^{-ie^{-s}t}\;\lambda_{(-e^{s},0)}\big)\otimes \lambda_{(1,t)}\,ds\,dt.
\end{align*}
Setting now $q=e^{s}$ and $v=-t$ we get
\begin{align*}
\Omega_{S}&=\frac1{2\pi}\int_{\R_+\times\R}q^{-1}e^{iv}\big(e^{-i{q^{-1}} v}\;\lambda_{(q^{-1},0)}+e^{i{q^{-1}} v}\;\lambda_{(-q^{-1},0)}\big)\otimes \lambda_{(1,-v)}\,q^{-1}dq\,dv\\
 &=\frac1{2\pi}\int_{\R^*\times\R}e^{i(v-{q^{-1}}v)}\;\lambda_{(q^{-1},0)}\otimes \lambda_{(1,-v)}\,\frac{dq\,dv}{q^2}.
\end{align*}
Setting $\xi_0=-1$ and remembering that we have here  $q^\flat\xi=q^{-1}\xi$ and $d(q,v)=(2\pi)^{-1}q^{-2}dq\,dv$, we finally get
$$
\Omega_{S}=\int_Ge^{i\langle q^\flat \xi_0-\xi_0,v\rangle}\,\lambda_{(q,0)^{-1}}\otimes\lambda_{(1,v)^{-1}} \,d(q,v),
$$
which is exactly the dual cocycle $\overline{\Omega}$ defined by~\eqref{eq:anti-Omega} in Remark \ref{BO}. As we already observed there, $\overline{\Omega}$ is
cohomologous to $\Omega$.
\ep

As was suggested by Stachura~\cite{Stachura}, the quantum group $(W^*(G),\Omega_S\Dhat(\cdot)\Omega_S^*)$ is isomorphic to the quantum $ax+b$ group of Baaj and Skandalis~\cite{BS2} (see also~\cite[Section~5.3]{VV}), but his arguments fall a bit short of proving that this is indeed the case. The above proposition together with Theorem~\ref{thm:bicrossed} below complete his work.

\section{Bicrossed product construction}\label{sec:bicross}

Recall that a pair $(G_1,G_2)$ of closed subgroups of a locally compact second countable group~$G$ is called a \emph{matched pair} if $G_1\cap G_2=\{e\}$ and $G_1G_2$ is a subset of $G$ of full measure~\cite{BSV}. Given such a pair, we have almost everywhere defined measurable left actions $\alpha$ of $G_1$ and $\beta$ of $G_2$ on the measure spaces $G_2$ and $G_1$, resp., such that
$$
gs^{-1}=\alpha_g(s)^{-1}\beta_s(g)\ \ \text{for}\ \ g\in G_1,\ s\in G_2.
$$
We can then define a bicrossed product $\hat G_1\bicrossl G_2$. This is a locally compact quantum group with the function algebra
$$
L^\infty(\hat G_1\bicrossl G_2):=G_1\ltimes_\alpha L^\infty(G_2).
$$
The coproduct on $L^\infty(\hat G_1\bicrossl G_2)$ is a bit more difficult to describe, but we will not need to know the exact definition and refer the reader for that to~\cite{BSV} or~\cite{VV}.\footnote{To be more precise, we are considering the quantum group $(M,\Delta)$ from~\cite[Section~4.2]{VV}, with $i\colon G_1\to G$ and $j\colon G_2\to G$ defined by $i(g)=g$, $j(s)=s^{-1}$. This is the same as the quantum group $(\hat M,\hat\Delta)$ from~\cite[Section~3]{BSV}, see the discussion following~\cite[Definition~3.3]{BSV}.} Then by~\cite[Propostion~2.9 and Theorem~2.13]{VV} the dual quantum group is $G_1\bicrossr \hat G_2=\hat G_2\bicrossl G_1$, the bicrossed product defined by the matched pair $(G_2,G_1)$ of subgroups of $G$.

\begin{theorem}\label{thm:bicrossed}
Let $G=Q\ltimes V$ be a second countable locally compact group satisfying the dual orbit Assumption~\ref{Fro} and $\Omega$ be the dual unitary $2$-cocycle $\Omega$ on $G$
defined by~\eqref{2CC}. Then the quantum group $(W^*(G),\Omega\Dhat(\cdot)\Omega^*)$ is isomorphic to the bicrossed product quantum group defined by the matched pair $(Q,\xi_0 Q\xi_0^{-1})$ of subgroups of $Q\ltimes\hat V$.
\end{theorem}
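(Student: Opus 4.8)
The plan is to recognize the multiplicative unitary $\hat W_\Omega$ of $(W^*(G),\Omega\Dhat(\cdot)\Omega^*)$, computed explicitly in Theorem~\ref{thm:mult-unitary}, as arising from a pentagonal transformation, to apply the Baaj--Skandalis reconstruction~\cite{BS3} of a matched pair of groups from such a transformation, and then to identify the reconstructed matched pair with $(G_1,G_2):=(Q,\xi_0Q\xi_0^{-1})$ inside $\tilde G:=Q\ltimes\hat V$ (with the dual action $q^\flat$), so that by~\cite{BS3} the quantum group $(W^*(G),\Omega\Dhat(\cdot)\Omega^*)$ is isomorphic to the bicrossed product of that matched pair.

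First I would check that $(G_1,G_2)$ is a matched pair of closed subgroups of $\tilde G$ in the sense recalled before the statement. Writing $\xi_0$ also for the element $(1,\xi_0)\in\tilde G$ and $\phi(q)=q^\flat\xi_0$, one computes $G_2=\{(q,\xi_0-\phi(q)):q\in Q\}$; injectivity of $\phi$ gives $G_1\cap G_2=\{e\}$, and since $G_1G_2=\{(r,\eta)\in Q\times\hat V:\eta+\phi(r)\in\phi(Q)\}$ while $\phi(Q)$ has full Haar measure in $\hat V$ by Assumption~\ref{Fro}, the set $G_1G_2$ has full measure in $\tilde G$; both subgroups are clearly closed. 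Along the way I would record the matched-pair actions $\alpha$ of $G_1$ on $G_2$ and $\beta$ of $G_2$ on $G_1$, read off from the factorization $gs^{-1}=\alpha_g(s)^{-1}\beta_s(g)$ in $\tilde G$; these come out as explicit expressions in $\phi$, $\phi^{-1}$, the dual action and the group operations of $Q$.

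Next, by Theorem~\ref{thm:mult-unitary} the operator $(\CF_V\otimes\CF_V)\hat W_\Omega(\CF_V^*\otimes\CF_V^*)$ on $L^2\big((Q\times\hat V)^2\big)$ is implemented by an almost everywhere defined measurable transformation $\mathcal T$ of $(Q\times\hat V)^2$ together with multiplication by the density $|\phi^{-1}(\phi(q_2^{-1})+\xi_1)|$. Since $\hat W_\Omega$ is a multiplicative unitary (by \cite[Proposition~5.4]{DC}, which underlies Theorem~\ref{thm:mult-unitary}), the pentagon equation $\hat W_{\Omega,12}\hat W_{\Omega,13}\hat W_{\Omega,23}=\hat W_{\Omega,23}\hat W_{\Omega,12}$ turns into the pentagonal relation for $\mathcal T$, so $\mathcal T$ is a \emph{pentagonal transformation} of $Q\times\hat V$ in the sense of~\cite{BS3}. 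I would then identify the matched pair that the Baaj--Skandalis procedure reconstructs from $\mathcal T$. The first leg of $\mathcal T$, namely $(q_1,\xi_1)\mapsto(q_2q_1,q_2^\flat\xi_1)$, is precisely left translation of $(q_1,\xi_1)\in\tilde G$ by $(q_2,0)\in Q=G_1$, which identifies $G_1$ with $Q\subset Q\ltimes\hat V$; and, after using~\eqref{eq:dqdxi} to transport $L^2(Q\times\hat V,|q|^{-1}d_Q(q)\,d\xi)$ onto $L^2(G_1\times G_2)$ and writing $\phi^{-1}(\xi_0+\xi_1)$ for the $G_1$-component of $(1,\xi_1)\in\tilde G$ in its $G_1G_2$-factorization, the second leg of $\mathcal T$ becomes exactly the combination of the actions $\alpha$ and $\beta$ found above. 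Hence the reconstructed matched pair is $(Q,\xi_0Q\xi_0^{-1})$, and \cite{BS3} gives the isomorphism of quantum groups, once the conventions are fixed consistently so that $Q$ plays the role of $G_1$.

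I expect the main obstacle to be this last identification. The second-leg formula in Theorem~\ref{thm:mult-unitary} is opaque, and matching it term by term with the matched-pair cocycle of $(Q,\xi_0Q\xi_0^{-1})$ requires choosing the measure isomorphism $L^2(G)\cong L^2(G_1\times G_2)$ correctly, tracking the various moduli ($|q|$, $\Delta_Q$, and $\Delta(q)=\Delta_Q(q)/|q|$) so that the density appearing in Theorem~\ref{thm:mult-unitary} matches the Radon--Nikodym factor dictated by the normalization in~\cite{BS3}, and keeping the four coordinate slots straight ($Q$-part versus $\hat V$-part, and $G_1$ versus $G_2$). A smaller point is to verify that the hypotheses under which~\cite{BS3} carries out the reconstruction hold here, which should follow from the dual orbit condition together with the already established fact that $(W^*(G),\Omega\Dhat(\cdot)\Omega^*)$ is a genuine locally compact quantum group.
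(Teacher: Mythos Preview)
Your proposal is correct and follows the same overall strategy as the paper: recognize $\hat W_\Omega$ (via Theorem~\ref{thm:mult-unitary}) as implemented by a pentagonal transformation on $Q\times\hat V$, then invoke the Baaj--Skandalis reconstruction~\cite{BS3,BSV} to obtain the matched pair $(Q,\xi_0Q\xi_0^{-1})$ inside $Q\ltimes\hat V$.

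The one methodological difference worth noting is how the second group is identified. You propose to \emph{guess} the matched pair first, write down its actions $\alpha,\beta$, and then verify that the second leg of the transformation agrees with them after transporting to $L^2(G_1\times G_2)$; you correctly flag this verification (with its moduli and coordinate bookkeeping) as the main obstacle. The paper instead \emph{runs the reconstruction algorithm}: it writes $v(x,y)=(x\bullet y,x\#y)$ and its inverse $v^{-1}(x,y)=(x\diamond y,x*y)$ explicitly, reads off $G_1$, $f_1$ from $x\bullet y=xf_1(y)$ and $G_2$, $f_2$ from $x*y=yf_2(x)$ (both turn out to be $Q$ with explicit actions), and then identifies the ambient group $G'$ and the embeddings $h_1,h_2$ by checking the single algebraic identity $h_2(f_2(x))h_1(f_1(y))=h_1(f_1(b))h_2(f_2(a))$ from \cite[Lemma~3.5(b)]{BS3}, which is satisfied by $G'=Q\ltimes\hat V$, $h_1(q)=q$, $h_2(q)=\xi_0q\xi_0^{-1}$. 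Because the Baaj--Skandalis data are unique up to isomorphism, this bypasses the density and measure-isomorphism bookkeeping you anticipated; in particular the factor $|\phi^{-1}(\phi(q_2^{-1})+\xi_1)|$ never needs to be matched by hand. Your route would also work, but the paper's is shorter precisely at the point you expected difficulty.
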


Here by $\xi_0^{-1}$ we of course mean the element $(\id,-\xi_0)\in\Aff(\hat V)$.

\bp
Consider the measure space $X=Q\times\hat V$, with the measure class defined by the product of Haar measures.
By Theorem~\ref{thm:mult-unitary} the multiplicative unitary $\hat W_\Omega$ is unitarily conjugate to the unitary associated with the measurable (almost everywhere defined)
transformation $v\colon X\times X\to X\times X$ given by
\begin{multline*}
v(q_1,\xi_1;q_2,\xi_2)=\big(q_2q_1,q_2^\flat\xi_1;\\
\phi^{-1}({q_2^{-1}}^\flat\xi_0+\xi_1)^{-1}\phi^{-1}(\xi_0+\xi_1),{\phi^{-1}({q_2^{-1}}^\flat\xi_0+\xi_1)^{-1}}^\flat
({q_2^{-1}}^\flat\xi_2-\xi_1)\big).
\end{multline*}
Hence this transformation is \emph{pentagonal}~\cite{BS3}. By a result of Baaj and Skandalis~\cite{BS3}, see also~\cite[Proposition~5.1]{BSV} for a correction, under mild technical assumptions the pentagonal transformations arise from matched pairs of groups. Let us follow the proof in~\cite{BS3} and see which pair we get.

Following~\cite{BS3} we write the transformation $v$ as $v(x,y)=(x\bullet y,x\# y)$. The maps $(x,y)\mapsto (x\bullet y,y)$ and $(x,y)\mapsto(x,x\# y)$ are measure class isomorphisms, so the assumptions of \cite[Proposition~5.1]{BSV} are satisfied and therefore the pentagonal transformation $v$ and the multiplicative unitary $\hat W_\Omega$ indeed come from a matched pair of groups.

It is not difficult to check that the inverse map $v^{-1}$, which we will write as $v^{-1}(x,y)=(x\diamond y,x*y)$, is given by
\begin{align*}
v^{-1}(q_1,\xi_1;q_2,\xi_2)&=\big(\phi^{-1}(\phi^{-1}(\xi_0+\xi_1)^\flat q_2^\flat\xi_0-\xi_1)^{-1}q_1,{\phi^{-1}(\phi^{-1}(\xi_0+\xi_1)^\flat q_2^\flat\xi_0-\xi_1)^{-1}}^\flat\xi_1;\\
&\qquad\qquad\phi^{-1}(\phi^{-1}(\xi_0+\xi_1)^\flat q_2^\flat\xi_0-\xi_1),\xi_1+\phi^{-1}(\xi_0+\xi_1)^\flat\xi_2\big).
\end{align*}

By \cite[Lemma 2.1]{BS3}, there exists a second countable locally compact group $G_1$, a right action of $G_1$ on $X$ and  an equivariant measurable map $f_1\colon X\to G_1$ such that for almost all pairs $(x,y)\in X\times X$ we have
$$
x\bullet y=xf_1(y).
$$
Although this is not explicitly stated in~\cite{BS3}, it is not difficult to see that the group $G_1$, the action of $G_1$ on $X$ and the map $f_1$ are uniquely determined by these properties up to an isomorphism. In our case it is easy to see what we get:
$$
G_1=Q,\ \ (q,\xi)q_1=(q_1^{-1}q,{q_1^{-1}}^\flat\xi), \ \ f_1(q,\xi)=q^{-1}.
$$

In a similar way there exist a (unique up to isomorphism) second countable locally compact group $G_2$, a right action of $G_2$ on $X$ and  an equivariant measurable map $f_2\colon X\to G_2$ such that for almost all pairs $(x,y)\in X\times X$ we have
$$
x* y=yf_2(x).
$$
In our case we get
$$
G_2=Q,\ \ f_2(q,\xi)=\phi^{-1}(\xi+\xi_0)^{-1},\ \ (q,\xi)q_2=(\phi^{-1}({q_2^{-1}}^\flat(q^\flat\xi_0-\xi_0)+\xi_0),{q_2^{-1}}^\flat(\xi+\xi_0)-\xi_0).
$$

According to \cite{BS3,BSV} we then get a locally compact group $G'$ and embeddings $h_i\colon G_i\to G$ of the groups $G_i$ as closed subgroups such that the map $G_1\times G_2\to G'$, $(s,g)\mapsto h_1(s)h_2(g)$ is injective and the complement of its image is a set of measure zero. By \cite[Lemma~3.5(b)]{BS3}, for almost all $(x,y)\in X\times X$ we have
$$
h_2(f_2(x))h_1(f_1(y))=h_1(f_1(b))h_2(f_2(a)),\ \ \text{where}\ \ (a,b)=v(x,y).
$$
Again, it is not difficult to see that these properties completely determine the locally compact group $G'$ up to isomorphism. In our case the above identity reads
$$
h_2(\phi^{-1}(\xi+\xi_0)^{-1})h_1(q^{-1})=h_1(\phi^{-1}(\xi+\xi_0)^{-1}\phi^{-1}({q^{-1}}^\flat\xi_0+\xi))h_2(\phi^{-1}(q^\flat\xi+\xi_0)^{-1}).
$$
Letting $q_1=q$ and $q_2=\phi^{-1}(\xi+\xi_0)$ we equivalently get
$$
h_2(q_2^{-1})h_1(q^{-1})=h_1(q_2^{-1}\phi^{-1}({q_1^{-1}}^\flat\xi_0+q_2^\flat\xi_0-\xi_0))h_2(\phi^{-1}(q_1^\flat(q_2^\flat\xi_0-\xi_0)+\xi_0)^{-1}),
$$
or in other words,
\begin{equation*}
h_1(q_1)h_2(q_2)=h_2(\phi^{-1}(q_1^\flat q_2^\flat\xi_0-q_1^\flat\xi_0+\xi_0))h_1(\phi^{-1}(q_1^\flat q_2^\flat\xi_0-q_1^\flat\xi_0+\xi_0)^{-1}q_1q_2).
\end{equation*}
We then see that these properties are satisfied by the group $G':=Q\ltimes\hat V$ and the embeddings
$$
h_1(q):=q=(q,0),\ \ h_2(q):=\xi_0q\xi_0^{-1}=(q,\xi_0-q^\flat\xi_0).
$$
Therefore we conclude that $\hat W_\Omega$ is unitarily conjugate to the unitary $W$ from~\cite{BSV} defined by the matched pair $(Q,\xi_0Q\xi_0^{-1})$ of subgroups of $Q\ltimes\hat V$, or equivalently, to the unitary $\hat W$ from~\cite{VV} defined by the matched pair $(\xi_0Q\xi_0^{-1},Q)$, see the discussion following~\cite[Definition~3.3]{BSV}. Therefore
$(W^*(G),\Omega\Dhat(\cdot)\Omega^*)$ is isomorphic to the dual of the bicrossed product defined by $(\xi_0Q\xi_0^{-1},Q)$, hence to the bicrossed product defined by $(Q,\xi_0Q\xi_0^{-1})$.
\ep

\begin{corollary} \label{cor:bicrossed}
The locally compact quantum group $(W^*(G),\Omega\Dhat(\cdot)\Omega^*)$ is self-dual. If $G$ is nontrivial, then this quantum group is noncompact and nondiscrete, and if $G$ is nonunimodular (that is, $\Delta_Q\ne|\cdot|_V$), then the quantum group is also nonunimodular, with nontrivial scaling group and scaling constant $1$.
\end{corollary}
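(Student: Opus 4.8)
The strategy is to read everything off from the identification with a bicrossed product provided by Theorem~\ref{thm:bicrossed}. For \emph{self-duality}, note that by that theorem $(W^*(G),\Omega\Dhat(\cdot)\Omega^*)$ is the bicrossed product of the matched pair $(Q,\xi_0Q\xi_0^{-1})$ of subgroups of $Q\ltimes\hat V$, so, by the discussion preceding Theorem~\ref{thm:bicrossed}, its dual is the bicrossed product of the swapped matched pair $(\xi_0Q\xi_0^{-1},Q)$. It therefore suffices to produce an automorphism $\theta$ of the ambient group $Q\ltimes\hat V$ carrying $\xi_0Q\xi_0^{-1}$ onto $Q$ and $Q$ onto $\xi_0Q\xi_0^{-1}$, since such an automorphism induces an isomorphism of the associated bicrossed products. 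I would take $\theta$ to be the composition of conjugation by $\xi_0^{-1}=(\id,-\xi_0)$ with the automorphism $\iota\colon(q,\xi)\mapsto(q,-\xi)$ of $Q\ltimes\hat V$ (the latter is an automorphism because inversion on $\hat V$ is $Q$-equivariant). A one-line computation shows that conjugation by $(\id,-\xi_0)$ sends $(q,0)\mapsto(q,q^\flat\xi_0-\xi_0)$ and $(q,\xi_0-q^\flat\xi_0)\mapsto(q,0)$, so that $\theta$ indeed interchanges $Q=\{(q,0)\}$ and $\xi_0Q\xi_0^{-1}=\{(q,\xi_0-q^\flat\xi_0)\}$.

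For \emph{noncompactness and nondiscreteness}, recall that the underlying von Neumann algebra is $W^*(G)$, which by the proposition immediately preceding Section~\ref{sec:KN} is isomorphic to $B(L^2(Q))$. Since $(V,Q)$ satisfies the dual orbit condition, $\hat V$, and hence $Q$, is nondiscrete, so $L^2(Q)$ is infinite dimensional and $W^*(G)$ is an infinite-dimensional factor. As the von Neumann algebra of a compact quantum group is an $\ell^\infty$-direct sum of matrix algebras, a compact quantum group with factorial von Neumann algebra is finite dimensional; hence $(W^*(G),\Omega\Dhat(\cdot)\Omega^*)$ is not compact. Being moreover self-dual, it is not discrete either: a discrete quantum group has compact dual, so a self-dual discrete quantum group would be compact, hence finite dimensional, a contradiction.

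Finally, for the \emph{nonunimodular case}, recall that the modular function of $G=Q\ltimes V$ is $\Delta(q,v)=\Delta_Q(q)/|q|_V$, so $G$ is nonunimodular precisely when $\Delta_Q\ne|\cdot|_V$. To treat this I would again use the bicrossed product picture of Theorem~\ref{thm:bicrossed} together with the explicit description of the Haar weights, modular element, scaling group and scaling constant of the bicrossed product of two genuine locally compact groups from~\cite{VV}; alternatively, all of this data can be extracted from the multiplicative unitary of Theorem~\ref{thm:mult-unitary} and the modular conjugation $\tilde J$ determined in Section~\ref{ss:mult}. Feeding in $G_1=Q$, $G_2=\xi_0Q\xi_0^{-1}\cong Q$ and the ambient group $Q\ltimes\hat V$, one finds that the modular element of the bicrossed product is expressed through the modular functions of $Q$ and of $Q\ltimes\hat V$ and the modulus $|\cdot|_V$ of the $Q$-action on $\hat V$, and that its nontriviality is equivalent to $\Delta_Q\ne|\cdot|_V$; likewise the scaling group acts on the copy of $L^\infty(\hat V)$ (respectively of $W^*(Q)$) inside $L^\infty$ by the Radon--Nikodym cocycle of the $Q$-action, which is nontrivial exactly when $\Delta_Q\ne|\cdot|_V$, while the scaling constant equals $1$ — a general feature of bicrossed products of genuine groups.

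The main obstacle is this last step: carefully unwinding the bicrossed-product modular data of~\cite{VV} for the pair $(Q,\xi_0Q\xi_0^{-1})$ and verifying that the nontriviality of both the modular element and the scaling group is governed by the single inequality $\Delta_Q\ne|\cdot|_V$ (and that the scaling constant is $1$). The self-duality and the noncompactness/nondiscreteness assertions, by contrast, are immediate consequences of Theorem~\ref{thm:bicrossed} and the factoriality of $W^*(G)$.
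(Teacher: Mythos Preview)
Your approach is essentially the paper's. Your automorphism $\theta=\iota\circ\Ad(\xi_0^{-1})$ is precisely conjugation by $(-\id,\xi_0)\in\Aff(\hat V)$, since $(-\id,\xi_0)=(-\id,0)(\id,-\xi_0)$ and conjugation by $(-\id,0)$ is your $\iota$; so the self-duality argument matches the paper's. The last part likewise matches: the paper simply cites \cite[Propositions~4.15,~4.16]{VV} for the modular element, scaling group, and scaling constant of a bicrossed product, which is the reference you would unwind.

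There is, however, a slip in your noncompact/nondiscrete argument. The claim ``the von Neumann algebra of a compact quantum group is an $\ell^\infty$-direct sum of matrix algebras'' is false (think of $L^\infty(\T)$); that structure theorem is for \emph{discrete} quantum groups. The correct order, which is the paper's, is: since $W^*(G)$ is an infinite-dimensional factor, the quantum group cannot be discrete (a discrete quantum group has $L^\infty$ a direct product of matrix algebras, hence is a factor only in the finite-dimensional case); then self-duality forces noncompactness. Your second paragraph works verbatim once you swap the roles of ``compact'' and ``discrete''.
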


\bp
In order to prove self-duality it suffices to show that the matched pairs $(Q,\xi_0Q\xi_0^{-1})$ and $(\xi_0Q\xi_0^{-1},Q)$ are isomorphic. The conjugation by the element $(-\id,\xi_0)\in\Aff(\hat V)$ gives such an isomorphism. Next, the quantum group $(W^*(G),\Omega\Dhat(\cdot)\Omega^*)$ cannot be discrete, since~$W^*(G)$ is a factor. By self-duality it then cannot be compact either. The rest follows by \cite[Propositions~4.15,~4.16]{VV}.
\ep

\bigskip


\begin{thebibliography}{99}

\bibitem{A}
P. Aniello, ``Square integrable projective representations and square integrable representations modulo a relatively central subgroup'',
Int. J. Geom. Methods Mod. Phys. {\bf 3} (2006), no.~2, 233--267.

%\bibitem{BS1} S. Baaj and G. Skandalis, ``Unitaires multiplicative et dualit\'e pour les produits crois{\'e}s de C$^*$alg{\`e}bres'', Ann. scient. {\'E}c. Norm. Sup. \textbf{26} %(1993), no.4, 425--488.

\bibitem{BS3}
S. Baaj and G. Skandalis, ``Transformations pentagonales''. C.R. Acad. Sci., Paris, S\'er. I {\bf 327} (1998), 623--628.

\bibitem{BSV} S. Baaj, G. Skandalis and S. Vaes, ``Non-semi-regular quantum groups coming from number theory'',
Comm. Math. Phys.  {\bf235} (2003), 139--167.

\bibitem{BG}
P. Bieliavsky and V. Gayral, ``Deformation quantization for actions of K\"ahlerian Lie groups'', Mem. Amer. Math. Soc. {\bf 236} (2015), no.~1115;
available with erratum at  \href{https://arxiv.org/abs/1109.3419}{arXiv:1109.3419v7 [math. OA]}.

\bibitem{BGNT1}
P. Bieliavsky, V. Gayral, S. Neshveyev and L. Tuset, ``On deformation of C$^*$-algebras by actions of K\"ahlerian Lie groups", Int. J. Math. {\bf 27} (2016), 1650023,
and Internat. J. Math. 30 (2019), no. 11, 1992002.

\bibitem{BP}
A. Borowiec and A. Pachol, ``kappa-Minkowski spacetime as the result of Jordanian twist deformation'', Phys. Rev. D {\bf 79}, 045012 (2009).

\bibitem{Bruhat}
F. Bruhat, ``Distributions sur un groupe localement compact et applications \`a l'\'etude des repr\'esentations des groupes $p$-adiques'', Bull. Soc. Math. France {\bf89} (1961), 43--75.

 \bibitem{CGG}
 V. Coll, M. Gerstenhaber and A. Giaquinto, ``An explicit deformation formula with non-commuting derivations'', Israel Math. Conf. Proc. Ring Theory, vol. 1, Weizmann Science Press, New York, 1989, pp. 396--403.

\bibitem{CT}
A. Connes and M. Takesaki, ``The flow of weights on factors of type III'', T\^{o}hoku Math. J. (2) {\bf 29} (1977), no.~4, 473--575.

\bibitem{Da}
A.A. Davydov, ``Galois algebras and monoidal functors between categories of representations of finite groups'', J. Algebra {\bf 244} (2001), no.~1, 273--301.

\bibitem{DC}
K. De Commer, ``Galois objects and cocycle twisting for locally compact quantum groups'',
J. Operator Theory {\bf66} (2011), no$.$ 1, 59 -- 106.

\bibitem{DC2}
 K. De Commer, ``I-factorial quantum torsors and Heisenberg algebras of quantized universal enveloping type'', preprint arXiv:1702.08191v1 [math.OA].

\bibitem{Dr}
V.G. Drinfeld, ``Constant quasiclassical solutions of the Yang-Baxter quantum equation'' (Russian), Dokl. Akad. Nauk SSSR {\bf 273} (1983), no.~3, 531--535.

\bibitem{DM}
M. Duflo and C.C. Moore, ``On the regular representation of a nonunimodular locally compact group'',
J. Funct. Anal. {\bf 21} (1976), no.~2, 209--243.

\bibitem{EK}
P. Etingof and D. Kazhdan, ``Quantization of Lie bialgebras. I'', Selecta Math. (N.S.) {\bf 2} (1996), no.~1, 1--41.

%\bibitem{GGS}
%M. Gerstenhaber, A. Giaquinto and S. D. Schack, ``Quantum symmetry'', in: \emph{Quantum groups} (Leningrad, 1990), 9--46, Lecture Notes in Math., { \bf 1510}, Springer, Berlin, 1992.

\bibitem{GV}
H. Gr\" abe and A. Vlassov,
``On a formula of Coll-Gerstenhaber-Giaquinto'',
J. Geom. Phys. {\bf28} (1998), no. 1-2, 129--142.

\bibitem{J}
D. Jondreville, ``A locally compact quantum group arising from
quantization of the affine group of a local field'', Lett. Math. Phys. {\bf109} (2019), 781--797.

\bibitem{KK}
E. Koelink and J. Kustermans, ``A locally compact quantum group analogue of the normalizer of $\mathrm SU(1,1)$ in ${\mathrm SL}(2,\C)$'', Comm. Math. Phys. {\bf 233} (2003), no.~2, 231--296.

\bibitem{Kor}
L.I. Korogodsky, ``Quantum group ${\mathrm SU}(1,1)\rtimes Z_2$ and ``super-tensor'' products'', Comm. Math. Phys. {\bf 163} (1994), no.~3, 433--460.

\bibitem{KLM}
P.P. Kulish, V.D. Lyakhovsky and A.I. Mudrov, ``Extended Jordanian twists for Lie algebras'', J. Math. Phys. {\bf 40} (1999), no.~9, 4569--4586.

\bibitem{KLS}
P.P. Kulish, V.D. Lyakhovsky and A. Stolin, ``Chains of Frobenius subalgebras of $\mathfrak{so}(M)$ and the corresponding twists'', J. Math. Phys. {\bf 42} (2001), no.~10, 5006--5019.

\bibitem{MR}
S. Majid and H. Ruegg, ``Bicrossproduct structure of $\kappa$-Poincar\'e group and non-commutative geometry'', Phys. Lett. B {\bf334} (1994), no. 3--4, 348--354.

\bibitem{Moore}
C.C. Moore, ``Group extensions and cohomology for locally compact groups. III'',
Trans. Amer. Math. Soc. {\bf 221} (1976), no.~1, 1--33.

\bibitem{Mov}
M.V. Movshev, ``Twisting in group algebras of finite groups'', Funct. Anal. Appl. {\bf 27} (1993), no.~4, 240--244 (1994).

\bibitem{NTcoc}
S. Neshveyev and L. Tuset, ``On second cohomology of duals of compact groups'', Internat. J. Math. {\bf 22} (2011), no.~9, 1231--1260.

\bibitem{NT}
S. Neshveyev and L. Tuset, ``Deformation of C$^*$-algebras by cocycles on locally compact quantum groups'', Adv. Math. \textbf{254} (2014), 454--496.

\bibitem{Og}
O. Ogievetsky, ``Hopf structures on the Borel subalgebra of $\mathfrak{sl}(2)$'', in: Proceedings of Winter School in Geometry and Physics, Zdikov, January 1993, Supplemento ai Rendiconti del Circolo Matematico di Palermo, Serie II, {\bf 37} (1994), 185--199.

\bibitem{Oo}
A.I. Ooms, ``On Frobenius Lie algebras'', Comm. Algebra {\bf 8} (1980), no.~1, 13--52.

\bibitem{BS2}
G. Skandalis, ``Duality for locally compact Quantum groups'' (joint work with Baaj). Mathematisches
Forschungsinstitut Oberwolfach, Tagungsbericht 46/1991, $C^*$-algebren, (1991), p. 20.

\bibitem{Stachura}
P. Stachura, ``On the quantum `ax+b' group'', Journal of Geometry and Physics {\bf 73} (2013), 125--149.

\bibitem{Segal}
I. Segal, ``Transforms for operators and symplectic automorphisms over a locally compact Abelian group'', Math. Scand. {\bf 13} (1963), 31--43.

%\bibitem{Shu} M. Shubin, {\em Pseudo-differential Operator and Spectral Theory}, Springer, Berlin, (2001).

\bibitem{Stratila}
S. Str\v{a}til\v{a}, \emph{Modular theory in operator algebras}. Abacus Press, Tunbridge Wells, (1981).

\bibitem{VV}
S. Vaes and L. Vainerman, ``Extensions of locally compact quantum groups and the bicrossed product construction'', Adv. Math. {\bf 175}  (2003),  no. 1, 1--101.

\bibitem{Wa}
A. Wassermann, ``Ergodic actions of compact groups on operator algebras. II. Classification of full multiplicity ergodic actions'',
Canad. J. Math. {\bf 40} (1988), no.~6, 1482--1527.

\bibitem{Weil}
A. Weil, ``Sur certain groupes d'op\'erateurs unitaires'', Acta Math. {\bf111} (1964), 143--211.

\bibitem{Wor}
S. L. Woronowicz, ``Unbounded elements affiliated with $C^*$-algebras and noncompact quantum groups'', Comm. Math. Phys. {\bf 136} (1991), no.~2, 399--432.

\end{thebibliography}
\end{document}